\definecolor{alertmanzano}{rgb}{0.8,0,0.3}
\newcommand{\alertm}[1]{%
	\marginpar{%
		\ifodd\value{page} \raggedright \else \raggedleft \fi
		\footnotesize{\textcolor{alertmanzano}{#1}}
	}
}
\newcommand{\N}{\mathbb{N}}
\newcommand{\R}{\mathbb{R}}
\newcommand{\M}{\mathbb{M}}
\newcommand{\h}{\mathbb{H}}
\newcommand{\E}{\mathbb{E}}
\newcommand{\dist}{\text{dist}}
\newcommand{\arcsinh}{\mathop{\rm arcsinh}\nolimits}
\newcommand{\Nil}{\mathrm{Nil}_3}
\newcommand{\PSL}{\widetilde{\mathrm{SL}}_2(\mathbb{R})}
\renewcommand{\div}{\mathop{\rm div}\nolimits}
\DeclareMathOperator{\arctanh}{arctanh}
\newtheorem{theorem}{Theorem}[section]
\newtheorem*{theorem*}{Theorem}
\newtheorem{proposition}[theorem]{Proposition}
\newtheorem{lemma}[theorem]{Lemma}
\theoremstyle{definition}
\theoremstyle{remark}
  \newtheorem{remark}{Remark}
\numberwithin{equation}{section}
\title{ Genus one $H$-surfaces with $k$-ends in $\h^2\times\R$}
\date{}
\author{Jes\'us Castro-Infantes}
\address{Departamento de Matemática Aplicada a las TIC, Universidad Politécnica de Madrid}
\email{jesus.castro@upm.es}
\author{Jos\'e S. Santiago}
\address{Departamento de Matem\'aticas, Universidad de Ja\'en, 23071 Ja\'en SPAIN}
\email{jssantia@ujaen.es}
\begin{document}
\maketitle
\begin{abstract}
We construct two different families of properly Alexandrov-immersed surfaces in $\mathbb H^2\times \mathbb R$ with constant mean curvature $0<H\leq \frac 1 2$, genus one and $k\geq2$ ends ($k=2$ only for one of these families). These ends are asymptotic to vertical $H$-cylinders for $0<H<\frac 1 2$. This shows that there is not a Schoen-type theorem for immersed surfaces with positive constant mean curvature in $\mathbb H^2\times\mathbb R$. These surfaces are obtained by means of a conjugate construction.
\end{abstract}
\section{Introduction}

In 1983, R. Schoen~\cite{Sc} proved that the unique complete immersed minimal surfaces in $\R^3$ with  finite total curvature and two embedded ends are the catenoids. Concerning surfaces with  constant mean curvature $H>0$ ($H$-surfaces in the sequel), Koreevar, Kusner and Solomon~\cite{KKS} proved that any complete properly embedded  $H$-surface in $\R^3$ with two ends is a rotationally invariant Delaunay surface. If we drop the hypothesis of being properly embedded, we have that Kapouleas~\cite{Kap} constructed immersed  $H$-surfaces in $\R^3$ with two ends and genus $g\geq 2$.

 Korevaar, Kusner, Meeks and Solomon~\cite{KKMS} proved analogous results in the hyperbolic space $\h^3$ showing that the only properly embedded  $H$-surfaces in $\h^3$ with two ends and  $H>1$ are the hyperbolic Delaunay surfaces.  In $\h^3$, $H$-surfaces with $H=1$ are known as Bryant surfaces and the value $H=1$  is known as \emph{critical} in the literature since surfaces with subcritical, critical, supercritical mean curvature usually have different geometric features. Levitt and Rosenberg~\cite{LR} proved that a complete Bryant surface in $\h^3$ with asymptotic boundary consisting of at most two points must be a surface of revolution. Again, if we remove the hypothesis of being properly embedded, Rossman and Sato~\cite{RS} have constructed properly immersed Bryant surfaces with genus one and two ends, each of them asymptotic to a point in the ideal boundary of $\h^3$.
 
 In the product space $\h^2\times\R$, Hauswirth, Nelli, Sa Earp and Toubiana proved~in \cite{HNST} a Schoen-type theorem, showing that the horizontal catenoids are the unique properly immersed minimal surfaces with finite total curvature and two embedded ends, each of them asymptotic to a vertical plane. Later,  Hauswirth, Menezes and Rodríguez \cite{HMR} removed the hypothesis of having finite total curvature. Manzano and Torralbo~\cite{MT} showed that there are not properly immersed surfaces with $0\leq H\leq \tfrac 1 2$ at bounded distance from a horizontal geodesic. For supercritical $H$-surfaces in $\h^2\times\R$, that is,  $H>\frac 1 2$, Mazet~\cite{Mazet-15} proved that a properly embedded $H$-surface with $H>\frac 1 2$, finite topology and cylindrically bounded (with respect to a vertical geodesic) must be a rotational Delaunay surface.  
 In this article, we prove the following result concerning the subcritical  and critical case ($0<H\leq \frac 1 2)$, showing that there is not a Schoen-type theorem for immersed $H$-surfaces in $\h^2\times\R$.
 \begin{theorem}
 	There exists a family of properly immersed genus-one $H$-surfaces with $0<H\leq\frac{1}{2}$ and two ends. If $H<\frac{1}{2}$, each of these ends is asymptotic to a vertical $H$-cylinder from the convex side.
 \end{theorem}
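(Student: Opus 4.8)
The plan is to construct these surfaces via the conjugate Plateau technique, which is the standard and powerful method in $\h^2\times\R$ for producing CMC surfaces with prescribed symmetry and asymptotic behavior. This is consistent with the abstract's promise of a "conjugate construction." The idea is to build a minimal (or in the general $H$ case, an $H$-) surface that solves a Plateau problem over a carefully chosen geodesic polygonal contour in some model space, and then take its sister surface (the conjugate) in $\h^2\times\R$, whose boundary-symmetry properties translate the free-boundary conditions of the original into mirror symmetries of the conjugate piece.

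Concretely, I would first set up the Daniel-type conjugation correspondence, which pairs an $H$-surface in $\h^2\times\R$ with a minimal surface in a suitable Berger-sphere-type or $\widetilde{\mathrm{SL}}_2(\mathbb R)$-type model (the target depending on whether $H<\tfrac12$, $H=\tfrac12$, or $H>\tfrac12$; here for $0<H\le\tfrac12$ one lands in $\widetilde{\mathrm{SL}}_2(\mathbb R)$ or $\Nil_3$). The first step is to design the fundamental piece: a minimal graph or Jenkins-Serrin-type solution over a geodesic contour $\Gamma$ consisting of two vertical geodesic segments (giving, after conjugation, the two vertical planes of symmetry responsible for the two ends) and horizontal geodesic arcs controlling the genus-one topology. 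The contour must be chosen so that its conjugate has boundary lying in vertical and horizontal totally geodesic or equidistant surfaces, across which one can reflect by Schwarz reflection to close up the surface. The genus-one condition will come from arranging the reflected copies to fit together around a central axis with the right rotational angle (a $2\pi/k$ or similar period condition), and the two-ended condition from the two vertical segments escaping to the ideal boundary.

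Next I would analyze the asymptotic behavior. After conjugation, each non-compact end of the fundamental piece should limit to a half of a vertical $H$-cylinder; I would verify this by studying the limit of the conjugate immersion as the defining parameter of the contour (a length or angle $\ell$) tends to infinity, using the fact that the conjugate of a vertical straight segment is a horizontal geodesic contained in a slice and vice versa, together with curvature estimates to control convergence. The convex-side asymptotics for $H<\tfrac12$ should follow from a maximum-principle comparison of the constructed end with the rotational $H$-cylinder, exploiting that vertical $H$-cylinders over a curve of geodesic curvature $2H$ are the natural asymptotic models. The critical case $H=\tfrac12$ is singled out because the $H$-cylinder degenerates (horocylinders), so the asymptotic statement is correspondingly weaker there, which is why the theorem only asserts convex-side asymptotics for $H<\tfrac12$.

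The main obstacle, as in all conjugate constructions, is the \emph{period problem}: after reflecting the fundamental piece, the resulting surface must close up consistently and be properly immersed without unwanted self-intersections or boundary mismatches, which forces a one-parameter family of contours to satisfy an algebraic/transcendental closing condition at the correct value of the parameter. I would set this up by writing the relevant horizontal and vertical displacements (or holonomy/rotation angles) of the conjugate boundary as explicit functions of the contour parameter $\ell$, then show by a continuity/intermediate-value argument that the period vanishes for some admissible $\ell$; controlling the sign changes of these period functions, typically by examining the two limiting regimes ($\ell\to 0$ and $\ell\to\infty$) and invoking monotonicity or an explicit computation, is the delicate technical heart of the proof. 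A secondary difficulty is ensuring genus exactly one rather than higher genus, which requires a precise count of the number of reflection copies and verification that the assembled surface is embedded near the axis so that the conjugate immersion is genuinely proper and Alexandrov-immersed.
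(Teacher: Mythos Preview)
Your general framework is correct---this is indeed a conjugate Jenkins--Serrin construction via the Daniel correspondence between minimal surfaces in $\E(4H^2-1,H)$ and $H$-surfaces in $\h^2\times\R$---but the proposal is too schematic to be a proof, and it contains a substantive confusion and one genuine gap.

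First, you have the roles of vertical and horizontal geodesics reversed. In this correspondence a \emph{vertical} geodesic segment in the boundary of the minimal piece conjugates to a curve in a \emph{horizontal} slice $\h^2\times\{z_0\}$ (a horizontal plane of symmetry), while a \emph{horizontal} geodesic conjugates to a curve in a \emph{vertical} plane of symmetry. The two ends do not come from two vertical segments escaping to the ideal boundary; they arise from a single side $l_3$ of the base triangle over which the Jenkins--Serrin graph takes the value $+\infty$. The genus-one handle comes from an additional finite vertical segment $\widetilde v_0$ in the boundary (over the vertex $p_0$), which after conjugation and reflection creates the central hole.

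Second, there are \emph{two} independent period problems, not one: the first forces the two horizontal symmetry curves $v_0$ and $v_1$ (or $v_2$) to lie in the same slice, and is solved by adjusting the boundary height $b$; the second forces the two vertical symmetry planes to meet at angle $\pi/k$, and is solved by varying the side length $a$ (and, for $H=\tfrac12$, also the vertex angle $\varphi$). Your single-parameter intermediate-value sketch does not capture this.

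The real gap concerns $k=2$ specifically. The construction admits two variants depending on which side of the triangle is taken infinite: with $a_1=\infty$ one obtains $(H,k)$-\emph{noids} whose ends approach $H$-cylinders from the concave side, and the second period function $\mathcal P_2^2$ only ranges between $\cos\varphi$ and values $>1$, so one can solve $\mathcal P_2^2=\cos(\pi/k)$ only for $k\ge 3$. To reach $k=2$ (i.e.\ $\mathcal P_2=\cos(\pi/2)=0$) one must use the other variant, $a_2=\infty$, producing $(H,k)$-\emph{nodoids} whose ends approach cylinders from the \emph{convex} side; here the second period $\mathcal P_2^1$ starts near $\cos\varphi$ and can be driven below $-1$ (by sending $a_1\to a_{\max}(\varphi)$ for $\varphi$ near $\tfrac\pi2$, where the limit is a non-embedded $H$-catenodoid), so the intermediate-value argument yields all $k\ge 2$. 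This dichotomy---and the fact that the convex-side asymptotics in the statement are precisely what make $k=2$ accessible---is the heart of the proof and is absent from your outline.
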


It seems reasonable that, if we replace the property of being properly immersed by properly embedded, the unique complete $H$-surfaces in $\h^2\times\R$ with two ends asymptotic to a vertical $H$-cylinder ($H<\frac 1 2$) should be the $H$-catenoids and the embedded $H$-catenodoids constructed in~\cite{Plehnert2,CMR}. 

Our genus-one $H$-surfaces with two ends belong to a larger family of examples. In fact, we construct two different families of highly symmetric properly Alexandrov-embedded surfaces with genus one. The first family is called $(H, k)$-noids with genus one and they have  $k\geq 3$ ends,  each of them asymptotic to a vertical $H$-cylinder from the concave side (only for $H<\frac 1 2$), see Theorem~\ref{th:k-noides}. Moreover, we prove that the $(H, k)$-noids  are embedded for $H>\frac{1}{2}\cos(\frac{\pi}{k})$, see Proposition~\ref{Theorem:embebimiento}. 
 The second family is called $(H, k)$-nodoids with genus one and they have $k\geq 2$ ends, each of them asymptotic to a vertical $H$-cylinder from the convex side (only for $H<\frac 1 2$), see Theorem~\ref{th-k-nodoids}. These  examples are the first $H$-surfaces with non-zero genus  for $0<H<\frac 1 2$ in $\h^2\times\R$.

 We also construct the analogous $\frac 1 2$-surfaces in the critical case, recovering the $\frac 1 2$-surfaces with genus one and $k\geq 3$ ends constructed by Plehnert~\cite{Plehnert}; we are able to  show that all of them are embedded, see Theorem~\ref{th:k-noides} and Proposition~\ref{Theorem:embebimiento}.  Moreover, we construct a new family of non-embedded $\frac 1 2$-surfaces with genus one and $k\geq 2$ ends, see Theorem~\ref{th-k-nodoids}.
 
Furthermore, we can adapt our construction to produce $H$-surfaces in $\h^2\times\R$ with $0<H\leq\frac{1}{2}$ and infinitely many ends, that are invariant by a discrete group of parabolic or hyperbolic translations with one limit end and two limit ends respectively. We can ensure the embeddedness of some of these surfaces. This extends the results of Rodr\'iguez~\cite{R}  and Mart\'in and Rodr\'iguez~\cite{MR} for minimal surfaces but only for the case of infinitely many ends  and two limit ends. 

For $0<H<\frac 1 2$, our $H$-surfaces  are expected to have finite total curvature, because they are proper, have finite topology and the ends are asymptotic to vertical  $H$-cylinders. In the minimal case, these conditions are equivalent to have finite total curvature, see~\cite{HMR}. Even  in the minimal case, there are only a few examples  with finite total curvature in the literature: the vertical planes, the Scherk graphs constructed by Nelli and Rosenberg~\cite{NR} and Collin and Rosenberg~\cite{CR}, the Twisted Scherk graphs by Pyo and Rodríguez~\cite{PR}, the minimal $k$-noids by Pyo~\cite{Pyo} and Morabito and Rodríguez~\cite{MorRod}, the genus $g\geq 1$ minimal $k$-noids with large $k$ by Martín, Mazzeo and Rodríguez~\cite{MMR} and~the genus one minimal $k$-noids with $k\geq 3$ ends by the first author and Manzano~\cite{CM}.

We use a conjugate Jenkins-Serrin technique to construct our genus-one $H$-surfaces in $\h^2\times\R$, that are the counterparts of the genus-one minimal $k$-noids of~\cite{CM}. This technique is based in  Daniel's sister correspondence~\cite{Dan} between minimal surfaces in the homogeneous simply connected $3$-manifold $\mathbb E(4H^2-1,H)$ and $H$-surfaces in $\h^2\times\R$. It consists in solving a Jenkins-Serrin problem (a Dirichlet problem for the minimal surface equation with possible asymptotic boundary values $\pm\infty$ over geodesics) in $\mathbb E(4H^2-1,H)$ and, then, extend the conjugate $H$-surface by mirror symmetries until obtaining a complete $H$-surface with the desired topology. This technique has been a very fruitful tool in the construction of $H$-surfaces in product spaces, see~\cite{CMT} and the references therein. In particular, our construction uses ideas developed by Mazet~\cite{Maz}, where he constructs minimal $k$-noids ($k\geq 3$) with genus $1$ in $\R^3$, by  Plehnert~\cite{Plehnert}, where she constructs critical $k$-noids ($k\geq 3$) with genus $1$ in $\h^2\times\R$ and~\cite{CM}, where  the aforementioned minimal $k$-noids ($k\geq 3$) with genus $1$ in $\h^2\times\R$ are obtained.

The paper is organized as follows: in Section~\ref{sec:preliminares}, we introduce some tools that we will use throughout the rest of the paper. We extend the Generalized Maximum Principle proved in~\cite{CR} to the case of $\PSL$, and we give a brief introduction to the conjugate technique used in the construction of the $H$-surfaces. In Section~\ref{sec:construction}, we construct the genus one $(H,k)$-noids  and the genus one $(H,k)$-nodoids  for $0<H\leq\frac{1}{2}$.
\section{Preliminaries}\label{sec:preliminares}

Given $\kappa,\tau\in\R$, we denote by $\mathbb E(\kappa,\tau)$ the unique complete and simply-connected $3$-manifold that admits a Killing submersion $\pi:\E(\kappa,\tau)\to\mathbb M^2(\kappa)$ with constant bundle curvature $\tau$, being $\mathbb M^2(\kappa)$ the simply connected surface with constant curvature $\kappa$ and whose fibers are the integral curves of a unitary Killing vector field $\xi$, see for instance~\cite{Man} or~\cite[Section~2]{CMT}. We will use the so-called cylinder model
\begin{equation}\label{eq:model}
	\mathbb E(\kappa,\tau)=\{(x,y,z)\in\R^3: 1+\tfrac{\kappa}{4}(x^2+y^2)>0\}
\end{equation}
endowed with the Riemannian metric $ds^2=\lambda^2(dx^2+dy^2)+(dz+\lambda \tau(ydx-xdy))^2$, where $\lambda=(1+\frac{\kappa}{4}(x^2+y^2))^{-1}$ is the conformal factor of the metric in $\M^2(\kappa)$. This base surface is identified with a disk of radius $\frac{2}{\sqrt{-\kappa}}$ for $\kappa<0$ and with $\R^2$ for $\kappa\geq 0$. We choose the orientation such that 
\begin{align}\label{eq:frame}
	E_1=\frac{\partial_x}{\lambda}-\tau y\partial_z,\ \ \ E_2=\frac{\partial_y}{\lambda}+\tau x\partial_z\ \ \ \text{and}\ \ \ E_3=\partial_z
\end{align}

\noindent define a global positively oriented orthonormal frame. In this model, the Killing submersion over $\M^2(\kappa)$  reads as $\pi(x,y,z)=(x,y)$ and the unit Killing vector field is $\xi=\partial_z$.

In this article, we will focus in the case $\kappa\leq 0$, where the model~\eqref{eq:model} is global. Moreover, for $\kappa<0$, the space $\mathbb{E}(\kappa,\tau)$ corresponds to $\PSL$ ($\tau\neq 0$) or $\h^2(\kappa)\times\R$ ($\tau=0$). These spaces have a well defined  asymptotic boundary, see for instance~\cite{Cas}; more precisely, we identify topologically $\PSL$ and $\h^2\times\R$ and  consider the product compactification for $\h^2\times\R$. Then, the asymptotic boundary of $\PSL$, denoted by $\partial_\infty\PSL$, is homeomorphic to the vertical asymptotic boundary $\partial_\infty \h^2(\kappa)\times\R$ joint with the horizontal asymptotic boundaries $\h^2\times\{\pm \infty\}$. In this setting, we will say that a point $p\in\partial_\infty\PSL$ belongs to the asymptotic boundary of a surface $\Sigma\subset\PSL$ if there exists a divergent sequence $\{p_n\}$ in $\Sigma$ that converges to $p$ in the product compactification. Eventually, in the case of $\Nil$ ($\kappa=0$), we will refer to the ideal horizontal boundaries as $\R^2\times\{\pm\infty\}$; in this case, the ideal vertical boundary is not well defined.  
\subsection{Minimal graphs in $\mathbb E (\kappa,\tau)$}

A vertical graph is a section of the submersion $\pi:\E(\kappa,\tau)\to \mathbb M^2(\kappa)$ defined over a domain $U$ in $\mathbb M^2(\kappa)$. If we consider the model~\eqref{eq:model} and the zero section $F_0(x,y)=(x,y,0)$, we can parameterize this section in terms of a function $u:U\to \R$ as 
\begin{equation}
	F_u(x,y)=(x,y,u(x,y)),\ \ (x,y)\in U.
\end{equation} 
If $u\in C^2(U)$, the mean curvature of this vertical graph is computed as
\begin{equation}
	2H=\div\left(   \frac{Gu}{\sqrt{1+|Gu|^2}} \right),
\end{equation}
where $\div(\cdot)$ and $|\cdot|$ are the divergence and the norm in $\mathbb M^2(\kappa)$, respectively, and $Gu$ is the generalized gradient (see also~\cite{D-PMN}) given by
\[Gu=(u_x\lambda^{-2}+\tau y \lambda^{-1})\partial_x+(u_y\lambda^{-2}-\tau x \lambda^{-1})\partial_y.\]
Let $\Sigma$ be the minimal graph of $u:U\subset\M^2(\kappa)\to\R $. We define the \emph{flux along an arc $c\subset U$} as
\begin{equation}\label{eq:flux}
	\mathcal F(\Sigma,c)=\int_c \left\langle \frac{Gu}{\sqrt{1+|Gu|^2}},-J_{\mathbb M^2} \frac{c'}{|c'|}\right\rangle_{\mathbb M^2},
\end{equation} 
where $J_{\mathbb M^2}$ represents the counter clock-wise rotation of angle $\frac{\pi}{2}$ in  $T\mathbb M^2(\kappa)$ and therefore $-J_{\mathbb M^2} \frac{c'}{|c'|}$ is an unitary normal vector to $c$. Observe that, by the Divergence Theorem, if $c$ is a simple and closed curve such that $F_u(c)$ encloses a minimal disk in $\Sigma$, then $\mathcal F(\Sigma,c)=0$, see also~\cite{MRR,Younes,Me}.   If $\Sigma$ has boundary, $c$ is a convex arc of this boundary and $u$ is continuous on $c$, we can also define the flux in $c\subset\partial U$, see~\cite[Lemma~6.2]{Younes}, in that case $-J_{\mathbb M^2} \frac{c'}{|c'|}$ is a unitary conormal. Moreover if $c$ is a geodesic  arc of $\mathbb M^2$ and $u$ takes the asymptotic values $\pm \infty$ over $c$, then $\mathcal F(\Sigma,c)=\pm|c|$ depending if $-J_{\mathbb M^2} \frac{c'}{|c'|}$ is an inward or an outer conormal vector, see \cite[Lemma~6.3]{Younes}.

\begin{proposition}\label{prop:Flux}
	Let $\Sigma$ be a minimal graph with boundary in $\mathbb E(\kappa,\tau)$ and let $\gamma\subset\partial\Sigma$ be a curve parameterized by arc-length, then
	\begin{equation}\label{eq:Flujos}
		\mathcal F(\Sigma,\pi(\gamma))=\int_\gamma \langle -J\gamma',\xi\rangle,
	\end{equation}
	\noindent where $J$ is the rotation of angle $\frac{\pi}{2}$  in $T\Sigma$, such that $\{\gamma',J\gamma',N\}$ is a positively oriented orthonormal frame.
\end{proposition}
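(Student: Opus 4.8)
The plan is to relate the two sides of \eqref{eq:Flujos} by unraveling the definition of the flux integral \eqref{eq:flux} and recognizing the integrand as a geometric quantity intrinsic to the surface $\Sigma$, one that turns out to be the component of the conormal along the Killing field $\xi$. The key point is that $\Sigma$ is the graph of $u$, so there is a clean dictionary between objects on the base $\M^2(\kappa)$ (where the flux is computed) and objects on $\Sigma$ itself (where the right-hand side lives), given by the graph parametrization $F_u$ and the projection $\pi$.

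\emph{First} I would fix notation along $\gamma$: since $\gamma\subset\partial\Sigma$ is arc-length parametrized, its projection $c=\pi(\gamma)\subset\partial U$ is a curve in the base, and $\{\gamma',J\gamma',N\}$ is a positively oriented frame on $\Sigma$ with $N$ the unit normal. \emph{Next}, I would write down the upward unit normal $N$ of the minimal graph $F_u$ explicitly in the frame \eqref{eq:frame}: up to sign conventions it has the form
\begin{equation*}
N=\frac{1}{\sqrt{1+|Gu|^2}}\bigl(-Gu+\xi\bigr),
\end{equation*}
where $Gu$ is the generalized gradient and the vertical component along $\xi=\partial_z$ carries the factor $\tfrac{1}{\sqrt{1+|Gu|^2}}$. \emph{Then} I would compute $\langle -J\gamma',\xi\rangle$. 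Since $J\gamma'=N\times\gamma'$ and $\{\gamma',J\gamma'\}$ spans $T\Sigma$, the vector $-J\gamma'$ is the inward (or outward) unit conormal to $\partial\Sigma$ inside $\Sigma$. \textbf{The crux} is to show that pairing this conormal with $\xi$ reproduces exactly the base integrand $\bigl\langle \tfrac{Gu}{\sqrt{1+|Gu|^2}},-J_{\M}\tfrac{c'}{|c'|}\bigr\rangle_{\M}$ of \eqref{eq:flux}, and that the arc-length measure on $\gamma$ matches the arc-length measure on $c$ through the submersion in a way that makes the two integrals agree.

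\emph{I would carry out the crux} by using the triple-product identity $\langle -J\gamma',\xi\rangle=\langle N\times\gamma',\xi\rangle=\det(N,\gamma',\xi)=\langle N,\gamma'\times\xi\rangle$, and then projecting: because $\pi$ is a Riemannian submersion with vertical field $\xi$, the horizontal part of $\gamma'\times\xi$ pushes down to $J_{\M}c'$ on the base, while the $\xi$-component of $N$ is $\tfrac{1}{\sqrt{1+|Gu|^2}}$ and its horizontal part is $-\tfrac{Gu}{\sqrt{1+|Gu|^2}}$. Substituting these and using that $\gamma$ being arc-length forces $|c'|$ to absorb the length distortion of the graph, the integrand collapses to precisely \eqref{eq:flux} integrated against $c=\pi(\gamma)$, which is $\mathcal F(\Sigma,\pi(\gamma))$ by definition.

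\textbf{The main obstacle} I anticipate is bookkeeping the orientations and the non-integrable horizontal distribution: because $\tau\neq 0$ the horizontal plane field is not tangent to any surface, so $\gamma'$ generally has a nonzero vertical component and one must be careful that the cross product $\gamma'\times\xi$ and its projection behave correctly under $d\pi$. Concretely, I expect the delicate step to be verifying that the term $\langle N,\gamma'\times\xi\rangle$ is insensitive to the vertical part of $\gamma'$ (so that only the horizontal part, i.e. $c'$, survives) and that the sign matches the convention that $-J_{\M}\tfrac{c'}{|c'|}$ is the conormal on the base corresponding under $F_u$ to $-J\gamma'$ on $\Sigma$; once the signs and the projection of the cross product are pinned down, the identity \eqref{eq:Flujos} follows directly.
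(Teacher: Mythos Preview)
Your proposal is correct and follows essentially the same route as the paper: both identify $-J\gamma'$ with a cross product involving $N$ and $\gamma'$, and then verify pointwise that the resulting $\xi$-component coincides with the base flux integrand (up to the factor $|c'|$, which cancels against the change of arc-length measure from $\gamma$ to $c=\pi(\gamma)$). The paper simply carries this out as a brute-force coordinate computation in the orthonormal frame $\{E_1,E_2,E_3\}$, writing $N$ and $\gamma'$ explicitly and reading off the $E_3$-component of $N\wedge\gamma'$, whereas you phrase the same calculation via the triple-product identity and the Riemannian-submersion structure; in particular your observation that the vertical part of $\gamma'$ drops out of $\gamma'\times\xi$ is exactly what makes the coordinate computation close, and your anticipated sign bookkeeping is the only place one must be careful (the paper's convention gives $-J\gamma'=N\wedge\gamma'$, so double-check your first equality against the orientation $\{\gamma',J\gamma',N\}$).
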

\begin{proof}

Let  $u:U\to\mathbb \R$ be the function defining the vertical graph  $\Sigma$.
 Assume that $\gamma:[a,b]\to\Sigma$, $\gamma(s)=(x(s),y(s),u(x(s),y(s)))$ is parameterized by arc-length. The upward normal along $\gamma$ is
	\[N=\frac{1}{\alpha}\left( -( \tau\lambda y+u_x )E_1+(\tau\lambda x -u_y)E_2 +\lambda \xi\right) ,\]
	where $\alpha^2=\lambda^2+ ( \tau\lambda y+u_x )^2+ (\tau\lambda x -u_y)^2$. By a straightforward computation, we get that
	\[\langle -J\gamma',\xi\rangle=\langle N\wedge \gamma',\xi\rangle=\frac {\lambda}{\alpha }\left( x'(\tau\lambda x -u_y)+y'(\tau\lambda y+u_x )\right).\]
	
	\noindent On the other hand, denoting $c=\pi(\gamma)=(x,y)$  and $X_u=\frac{Gu}{\sqrt{1+|Gu|^2}}$, we easily obtain that
	\begin{align*}
		\left\langle X_u,-J_{\mathbb M^2} \frac{c'}{|c'|}\right\rangle_{\mathbb M^2}&=
		\frac{1}{|c'|}\langle X_u,-y'\partial x+x'\partial_y\rangle_{\mathbb M^2}\\
		&=\frac{\lambda}{|c'|\alpha}\left( x'(\tau\lambda x -u_y)+y'(\tau\lambda y+u_x )\right).
	\end{align*}  
	\noindent Therefore we get the desired equation~\eqref{eq:Flujos}.
\end{proof}
 We have that Proposition~\ref{prop:Flux} holds true when $\gamma\subset\partial\Sigma$ and $u$ is continuous in $\gamma$ or when $\gamma$ is a horizontal geodesic in $\partial_\infty\Sigma$ (we are identifying $-J\gamma'$ with $\pm\xi$ in the limit when we approach the asymptotic boundary and the sign depends on the orientation and the asymptotic value that we take). By Proposition 2.1., we define  $\mathcal F(\Sigma,\gamma)=\mathcal{F}(\Sigma, \pi(\gamma))=\int_\gamma \langle -J\gamma',\xi\rangle$ for a curve $\gamma\subset\partial\Sigma$. It allows us to define the flux along a curve for minimal surfaces in general and not only for minimal graphs since Equation~\eqref{eq:flux} only depends on the conormal of the curve. Moreover, observe that, if the angle function $\nu$ vanishes along  $\gamma$, then  $\mathcal F(\Sigma,\gamma)=\pm|\pi(\gamma)|$.

Assume that $\kappa<0$ and  let $\gamma_1$ and $\gamma_2$ be two convex embedded arcs in $\h^2(\kappa)$ with vertex on the same ideal point $q_0\in \partial_\infty \h^2(\kappa)$. We say these arcs are \emph{asymptotic} at $q_0$ if $\dist(q,\gamma_i) \to 0$ as $q\to q_0$ with $q\in\gamma_j$ and $j\neq i$.  We will show in the next proposition that the Generalized Maximum Principle~\cite[Theorem 2]{CR} easily extends to $\PSL$. 
\begin{proposition}[Generalized Maximum Principle]\label{prop:G-M}
	Let $\Omega\subset \h^2(\kappa)$ be an unbounded piecewise regular domain such that $\partial \Omega \cap \partial_\infty \h^2(\kappa)$ is  finite and  every $q\in \partial\Omega \cap \partial_\infty \h^2(\kappa)$ is the endpoint of exactly two asymptotic arcs in $\partial \Omega$. Let $U\subset \Omega$ be a domain and $u,v\in C^0(\overline{U})$ functions that define  minimal graphs over $U$, $\Sigma_u$ and $\Sigma_v$,  respectively. If $\Sigma_u$ is below $\Sigma_v$ on $\partial U$, i.e., $u\leq v$ on $\partial U$, then $\Sigma_u$ is below $\Sigma_v$ on $U$.
\end{proposition}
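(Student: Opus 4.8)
The plan is to reduce the statement to the already-known Generalized Maximum Principle in $\h^2(\kappa)\times\R$, namely \cite[Theorem 2]{CR}, by exploiting that the bundle curvature $\tau$ plays no role in the comparison of two graphs. First I would set $w=u-v$ and compute the difference of the generalized gradients. Since both $Gu$ and $Gv$ share the same $\tau$-dependent terms $\tau y\lambda^{-1}\partial_x-\tau x\lambda^{-1}\partial_y$, these cancel and one obtains $Gu-Gv=\nabla_{\M^2(\kappa)}w$, the gradient of $w$ in the base. In particular, the difference of the two generalized gradients is $\tau$-independent and coincides with the expression obtained in the case $\tau=0$.

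Next I would write $X_u-X_v$, where $X_u=Gu/\sqrt{1+|Gu|^2}$, in divergence form. Because the map $p\mapsto p/\sqrt{1+|p|^2}$ is the gradient of a strictly convex function, its differential is symmetric and positive definite; integrating it along the segment joining $Gv$ to $Gu$ yields a positive definite endomorphism $B$ of $T\M^2(\kappa)$ with $X_u-X_v=B\,\nabla_{\M^2(\kappa)}w$. Since $\Sigma_u$ and $\Sigma_v$ are minimal, $\div(X_u)=\div(X_v)=0$, so $w$ solves the homogeneous linear divergence-form elliptic equation $\div(B\,\nabla_{\M^2(\kappa)}w)=0$ on $U$. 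This equation is uniformly elliptic on compact subsets and has exactly the same structure as the one appearing for minimal graphs in $\h^2(\kappa)\times\R$; the bundle curvature has disappeared from the comparison.

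At this point the geometric hypotheses transfer without change. The asymptotic boundary of $\PSL$ is identified topologically with the vertical and horizontal boundaries of $\h^2(\kappa)\times\R$, so the notions of asymptotic arcs and of $\partial\Omega\cap\partial_\infty\h^2(\kappa)$ being finite have the same meaning in both settings. Consequently the argument of \cite[Theorem 2]{CR} applies verbatim to $w$. For completeness, that argument runs as follows: assuming $M:=\sup_U w>0$, the strong maximum principle shows that $w$ cannot reach $M$ at an interior point of $U$ without being constant, and since $w\le 0$ on $\partial U$ the value $M$ can in either case only be approached along a divergent sequence $\{p_n\}$ converging to some ideal vertex $q_0\in\partial\Omega\cap\partial_\infty\h^2(\kappa)$.

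The main obstacle, and the only place where the hypotheses on $\Omega$ are used, is controlling $w$ near such an ideal vertex $q_0$. There the domain is pinched between the two asymptotic arcs of $\partial\Omega$ ending at $q_0$, so that $\dist(q,\gamma_i)\to 0$ as $q\to q_0$; I would exploit this narrowing to build a barrier confining $w$ on increasingly thin subregions abutting $q_0$, where the comparison principle together with the boundary values $w\le 0$ on the arcs forces $\limsup w\le 0$ as one approaches $q_0$. This contradicts $w(p_n)\to M>0$, whence $M\le 0$, i.e. $u\le v$ on $U$ and $\Sigma_u$ lies below $\Sigma_v$. The whole difficulty is thus isolated at the finitely many ideal vertices and is handled exactly as in \cite{CR}, precisely because the reduction above has removed every trace of $\tau$ from the comparison.
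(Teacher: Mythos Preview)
Your reduction to the linear divergence-form equation $\div(B\,\nabla_{\M^2(\kappa)}w)=0$ is correct and elegant, but the rest of the argument has two related gaps. First, your description of \cite[Theorem~2]{CR} is not accurate: that proof is \emph{not} a $\sup$-plus-barrier argument, but a flux argument. One considers the set $A=\{u>v\}$, encloses the noncompact part of $A$ by small horocycle arcs $h_i$ at each ideal vertex together with arcs $c_i\subset\partial A$ where $u=v$, applies the divergence theorem to $X_u-X_v$ on the resulting cycle, and uses the monotonicity inequality $\langle G(u-v),X_u-X_v\rangle\ge 0$ to force all the fluxes $\mathcal F(\Sigma,c_i)$ to have a common sign; since the flux on each $h_i$ is bounded by $|h_i|$ and the asymptotic-arc hypothesis makes $\sum|h_i|$ arbitrarily small, one reaches a contradiction. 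This is exactly what the paper does, directly in $\E(\kappa,\tau)$, invoking Proposition~\ref{prop:Flux} and \cite[Lemma~5.1]{Me}; no reduction to $\tau=0$ is needed.

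Second, and more seriously, the step you flag as ``the main obstacle'' is not actually carried out: you assert that the narrowing at $q_0$ lets you ``build a barrier'' forcing $\limsup w\le 0$, but you give no candidate barrier, and it is not clear one exists. The endomorphism $B$ is \emph{not} $\tau$-independent (it is an average of $DF$ along the segment from $Gv$ to $Gu$, both of which contain the $\tau$-terms), and its smallest eigenvalue behaves like $(1+|G\,\cdot\,|^2)^{-3/2}$, which may degenerate near $q_0$ where the gradients are uncontrolled; so you cannot invoke uniform ellipticity. Nor can you simply use $w\le 0$ on the two asymptotic arcs of $\partial\Omega$, since those arcs need not lie in $\partial U$. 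The flux argument sidesteps all of this: it only needs the crude bound $|\mathcal F(\Sigma,h_i)|\le |h_i|$, which holds regardless of the size of $|Gu|,|Gv|$, and the asymptotic-arc hypothesis is used precisely to make $|h_i|\to 0$. If you want to salvage your approach, you would essentially have to reproduce the flux argument for the equation $\div(B\nabla w)=0$; at that point it is simpler to run it directly for $X_u-X_v$ as the paper does.
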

\begin{proof}
	
	We will argue by contradiction. Let us suppose the set $A=\{p\in U:\  u(p)>v(p)\}$ is not empty. By the maximum principle for compact domains proved in~\cite{Younes}, we have that $\overline{A}$ cannot be compact. Without loss of generality, we consider that $A$ is a connected component since we can reason in the same way in each connected component. So we know that $\partial A$ contains arcs going into ideal points of $\Omega$, i.e., $\partial A$ has $n\geq 1$ ideal points. By hypothesis, each ideal point $q_i$, $i=1,\dots, n$, is the endpoint of two asymptotic arcs. We choose small horocycles $\mathcal{H}_i$ containing $q_i$ which  intersects once each asymptotic arc with vertex $q_i$. Let $h_i=\mathcal H_i\cap A$ and let $c_i$ be the compact arcs in $\partial A$ joining the curves $h_i$ with $h_{i+1}$ (we are assuming that $h_1=h_{n+1}$), see Figure~\ref{Fig-Collin_Krust}. We call $\alpha$ the continuous and piecewise smooth closed cycle composed by $\bigcup_i h_i\cup c_i$. By the Divergence Theorem, the flux of the function $u$ and $v$    over the cycle $\alpha$ is $0$. By Proposition~\ref{prop:Flux},   the flux of $u$ and the flux of $v$ in each curve $h_i$ is bounded by the length of $h_i$ on each arc $h_i$. As  every point in $\partial A\cap \partial_\infty \h^2(\kappa)$ has exactly two  asymptotic arcs,  for all $\epsilon>0$, we can choose small horocycles such that $\sum_i |h_i|<\epsilon$.

	On the arcs $c_i$ belonging to $\partial A$, we know that $u=v$ and we have $Gu-Gv=\lambda \eta$ as far as $u-v > 0$ on $A$, where $\eta$ is a  vector field perpendicular to the arcs on $\partial A$.    If $\lambda = 0$ at any point, then $\nabla u = \nabla v$ and $u=v$ at this point and, by the maximum principle at the boundary, both surfaces should be the same. Consequently, $\lambda$ has sign and we can assume that $\eta = -J\alpha'$.
	
	By~\cite[Lemma~4.1]{LeR}, we have that the inequality 
	\begin{equation}\label{ineq:G}
		\bigg\langle Gu-Gv, \frac{Gu}{\sqrt{1 + |Gu|^2}}-\frac{Gv}{\sqrt{1 + |Gv|^2}}\bigg\rangle \geq 0,
	\end{equation}
	is satisfied, with equality, if and only if $\nabla u = \nabla v$. We get that along the arcs $c_i$
	\[
	\bigg\langle Gu-Gv, \frac{Gu}{\sqrt{1 + |Gu|^2}}-\frac{Gv}{\sqrt{1 + |Gv|^2}}\bigg\rangle = \lambda \bigg\langle \eta,  \frac{Gu}{\sqrt{1 + |Gu|^2}}-\frac{Gv}{\sqrt{1 + |Gv|^2}}\bigg\rangle
	\]
	\noindent and we deduce that $\mathcal{F}(\Sigma_u, c_i)-\mathcal{F}(\Sigma_v, c_i)$ has sign for every $i=1,\dots,n$. Therefore,
	\begin{align*}
		0=\mathcal{F}(\Sigma_u, \alpha)-\mathcal{F}(\Sigma_v, \alpha) & = \sum_i (\mathcal{F}(\Sigma_u, c_i)-\mathcal F(\Sigma_v,c_i)) + \sum_i (\mathcal{F}(\Sigma_u, h_i)-\mathcal{F}(\Sigma_v, h_i))
		\\
		&<\sum_i (\mathcal{F}(\Sigma_u, c_i)-\mathcal{F}(\Sigma_v, c_i))+\epsilon.
	\end{align*}

	\noindent Choosing $\epsilon>0$ small enough, we obtain that  $\mathcal{F}(\Sigma_u, c_i)-\mathcal{F}(\Sigma_v, c_i)$ must vanish for all $i=1,\dots,n$ and we have that $\langle \eta, \frac{Gu}{\sqrt{1+|Gu|^2}}-\frac{Gv}{\sqrt{1+|Gv|^2}}\rangle=0 $ along $c_i$. We have that the equality in \eqref{ineq:G} holds, and we get a contradiction with the maximum principle in the boundary since $u=v$ and $\nabla u = \nabla v$ along the curves $c_i$.
\end{proof}
\begin{figure}[htb]
	\begin{center}
		\includegraphics[height=5cm]{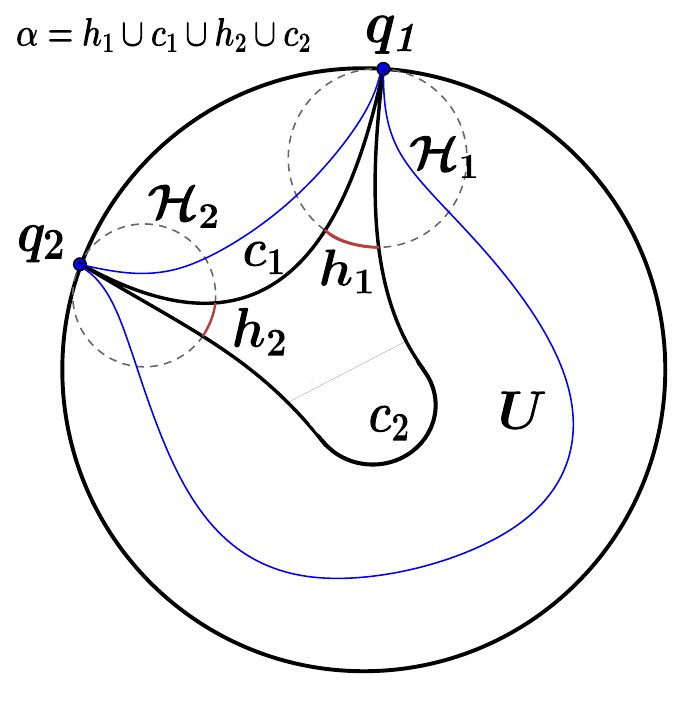}
	\end{center}
	\caption{The domain $U$ (blue) and the domain $A$ (black)  in the proof of Proposition~\ref{prop:G-M}.}
	\label{Fig-Collin_Krust}
\end{figure}
\begin{remark}
Observe that The Generalized Maximum Principle~\ref{prop:G-M} also extends to possible infinite boundary values of the graphs along the geodesic arcs of $\partial U$, see~\cite[Remark 6]{CR}.
\end{remark}

\subsection{The umbrella, the surface $\mathcal I$ and the helicoids $\mathcal H_{\infty,a_2}$ and $\mathcal H_{a_1,\infty}$.}\label{subsec:relevant surfaces}
\begin{itemize}
	\item The umbrella $\mathcal U_p$ is the minimal surface composed of all horizontal geodesics starting at $p$. The umbrella's angle function $\nu$ only takes the value $1$ at $p$. For $\kappa\leq 0$, the umbrella centered at the origin $(0,0,0)$  is the graph of the function $z=0$ in the cylinder. For $\tau>0$, the graph of the umbrella centered in $(0,y_0,0)$ with $y_0>0$, is positive in $\{x<0\}$ and negative in $\{x>0\}$, see Figure~\ref{Fig-UM-I}.
	
	\item The surface $\mathcal I$ is the minimal  surface composed of all horizontal geodesics perpendicular to a horizontal geodesic, called the axis of $\mathcal I$. The angle function $\nu$ of $\mathcal I$ is only equal to $1$ along the axis. For $\kappa\leq 0$, the surface $\mathcal I$ with axis in $\{y=0,z=0\}$ in the cylinder model is the graph of the function (see~\cite[Section 2.2]{CMR})
	 \[z(x,y)= \left\{ \begin{array}{lcc}
		\tau x y &  \mathrm{if}\; \kappa = 0, \\
		\\ \frac{2\tau}{\kappa} \arctan\frac{2xy}{\frac{4}{\kappa}+x^2-y^2} &  \mathrm{if}\; \kappa < 0,
	\end{array}
	\right.\]
	\noindent 
 see also~\cite[Section 3 (3)]{Cas} for description in the half space model with $\kappa=-1$. 

\begin{figure}[htb]
	\begin{center}
		\includegraphics[height=8cm]{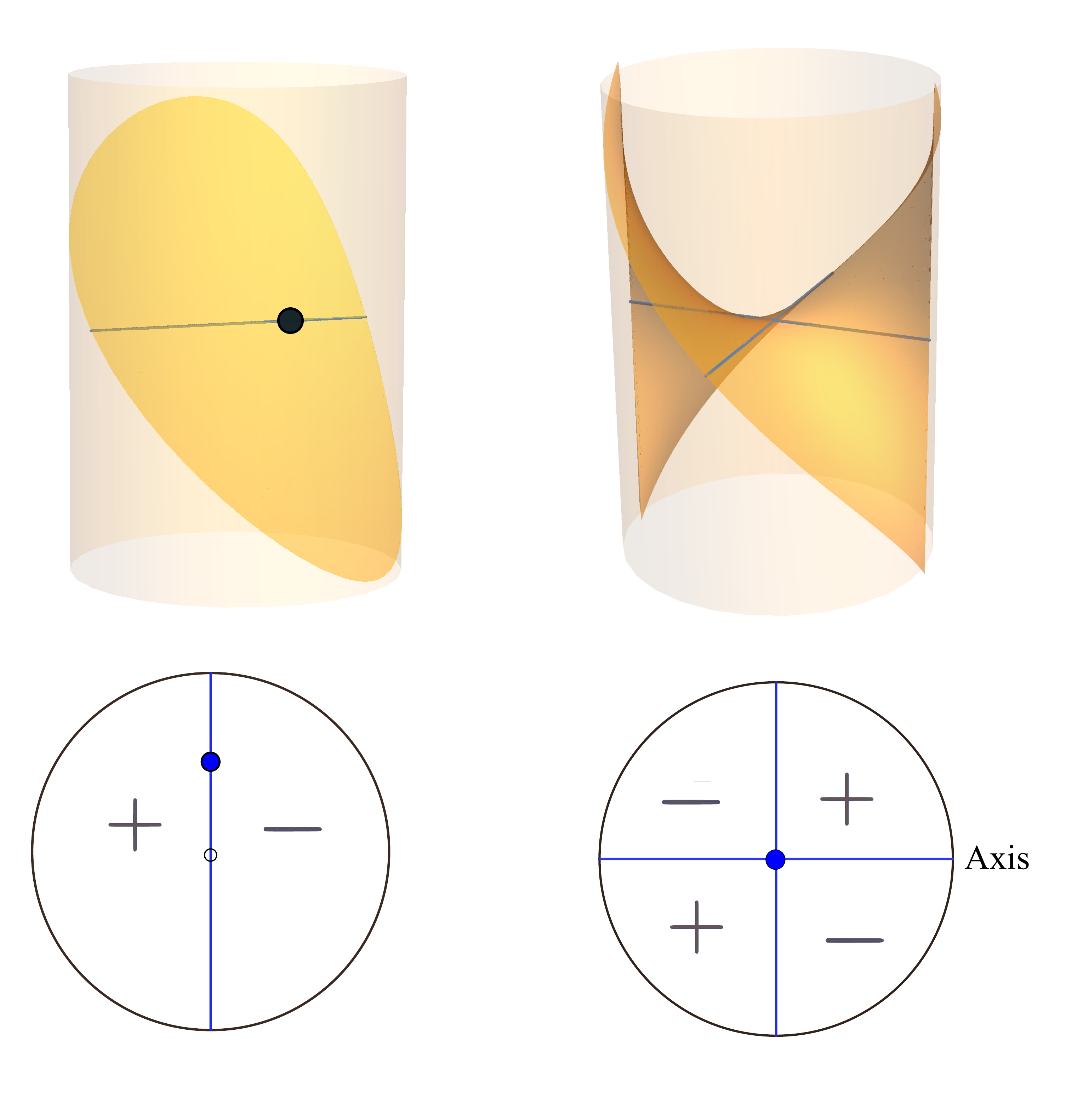}
	\end{center}
	\caption{The umbrella centered at $(0,y_0,0)$ and its projection (left) and the surface $\mathcal I$ and its projection (right) for $\kappa<0$ and $\tau>0$. }
	\label{Fig-UM-I}
\end{figure}
We can express $\mathcal I$ in euclidean  polar coordinates $(r,\theta)$, that is, $x=r \cos(\theta)$ and $y=r\sin(\theta)$, where $r$ is the euclidean distance to the origin in $\M^2(\kappa)$ (identified with a disk of radius $\frac{2}{\sqrt{-\kappa}}$) and $\theta$ is the angle formed with the axis $\{y=0\}$. 
\begin{equation}\label{eq:superficie I}
	z(r,\theta)= \left\{ \begin{array}{lcc}
	\frac{\tau}{2} r^2 \sin(2\theta) &  \mathrm{if}\; \kappa = 0, \\
	\\ \frac{2\tau}{\kappa} \arctan\frac{\kappa r^2\sin(2\theta)}{ (4 +r^2 \kappa \cos(2\theta))} &  \mathrm{if}\; \kappa < 0,
\end{array}
\right.
\end{equation}
\noindent  Notice that, when $\kappa < 0$, 
  \[\lim_{r\to \frac{2}{\sqrt{-\kappa}}}z(r,\theta) = -\frac{2\tau}{\kappa}(\frac{\pi}{2}-\theta),\ \text{for $\theta\in (0,\pi)$ } \]    and
   \[\lim_{r\to \frac{2}{\sqrt{-\kappa}}}z(r,\theta) = -\frac{2\tau}{\kappa}(-\frac{\pi}{2}+\theta),\ \text{for $\theta\in (-\pi,0)$ }.  \]   When $\kappa = 0$, if $\sin(2\theta)\neq 0$ we have $\lim_{r\to \infty }z(r,\theta)=\pm \infty$. Observe also that for $\theta\in(0,\frac \pi 2)$ the surface $\mathcal I$ is radially increasing.

\item The horizontal helicoids $\mathcal H_{\infty,a_2}$ and $\mathcal H_{a_1,\infty}$ with $a_1,a_2>0$ are two family of complete properly embedded minimal surfaces in $\Nil$ foliated by straight lines orthogonal to a horizontal geodesic $\Gamma$, see~\cite{CMR,DH}.  Let $S_2$ (resp. $S_1$) be the strip of width  $a_2$ (resp. $a_1$)  with edges $l_i$, $i=1,2,3$, being $l_2$ the edge of length $a_2$ (resp. $l_1$ the edge of length $a_1$) and $l_1$ (resp. $l_2$) and $l_3$ parallel edges orthogonal to $l_2$ (resp. $l_1$) such that the boundary  $\partial S_2 =l_1\cup l_2\cup l_3$ (resp. $\partial S_1 = l_3 \cup l_1 \cup l_2$) is traveled in a negative sense, where the union $\cup$ is written down in the same order that we travel the boundary. The fundamental piece of $\mathcal H_{\infty,a_2}$ (resp. $\mathcal H_{a_1,\infty}$) can be seen as a solution of a Jenkins-Serrin problem over $S_2$ (resp. $S_1$) with boundary values $0$ over $l_1$ and $l_2$ and $+\infty$  over $l_3$. These helicoids correspond with the family $\mathcal H_\mu$ for $|\mu|>\frac{1}{2}$ of \cite[Section 3.2]{CMR}.

\end{itemize}
\subsection{Preliminaries about conjugation}\label{subsec:conj}

We mention here a brief introduction about the conjugate technique that we will use to construct the $H$-surfaces in $\h^2\times\R$ of Section~\ref{sec:construction}. We refer to~\cite{CMT} and the references therein for more detailed description of the technique.

Daniel~\cite{Dan}, and Hauswirth, Sa Earp and Toubiana~\cite{HST} discovered a Lawson-type isometric correspondence between a simply connected minimal immersion $\widetilde\phi: \Sigma\to \E(4H^2-1,H)$ and a $H$-immersion $\phi:\Sigma\to \h^2\times\R$. The fundamental data $(A,T,\nu)$ of the $H$-immersion $\phi:\Sigma\to \h^2\times\R$, being  $A$ the shape operator, $\nu=\langle N,\xi\rangle$ the angle function and $T$ the tangent part of the Killing vector field $\xi$,   are related with the fundamental data $(\widetilde A,\widetilde T,\widetilde \nu)$ of the minimal-immersion $\widetilde\phi:\Sigma\to E(4H^2-1,H)$ by
\begin{equation}
	(A,T,\nu)=(J\widetilde A +H\cdot \text{id},J\widetilde T,\widetilde \nu).
\end{equation} 
Throughout the text we will write $\Sigma$ and $\widetilde\Sigma$ to refer to the conjugate (possibly non-embedded) surfaces. Our initial minimal piece $\widetilde\Sigma\subset\E(4H^2-1,H)$ is going to be a solution to a Jenkins-Serrin  problem, that is, a Dirichlet problem with possibly asymptotic values $\pm\infty$ over geodesics in $\M^2(4H^2-1)$. Moreover, the boundary of $\widetilde\Sigma$ will be composed of horizontal and vertical geodesics of $\E(4H^2-1,H)$ and the asymptotic boundary will be composed of vertical ideal geodesics (only in the case of $0\leq H<\frac{1}{2}$) and horizontal ideal geodesics in $\mathbb M^2(4H^2-1)\times\{\pm \infty\}$. In the following lemmas, we describe the conjugate curves of horizontal and vertical geodesics. 

\begin{lemma}\cite[Lemma~3.6]{CMT}\label{lemma:horizontal}
	If $\widetilde \gamma\subset\partial \widetilde \Sigma$ is a horizontal geodesic, then the conjugate curve $\gamma\subset\partial\Sigma$ is  contained in a vertical plane of $\h^2\times\R$ that the immersion meets orthogonally. Moreover, if $\gamma=(\beta,z)\subset\h^2\times\R$, then $|\beta'|=|\nu|$ and $|z'|=\sqrt{1-\nu^2}$.
\end{lemma}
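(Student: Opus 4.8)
The plan is to exploit the fundamental relations $(A,T,\nu)=(J\widetilde A+H\,\mathrm{id},J\widetilde T,\widetilde\nu)$ together with the fact that $\phi$ and $\widetilde\phi$ induce the same metric on the abstract surface $\Sigma$. Thus a curve $c$ in $\Sigma$ gives $\widetilde\gamma=\widetilde\phi(c)$ and $\gamma=\phi(c)$ with the same intrinsic geometry; in particular $J$, the rotation by $\frac{\pi}{2}$ in $T\Sigma$, is common to both immersions, and $c$ is a geodesic of $\Sigma$ because $\widetilde\gamma$ is a geodesic of $\widetilde\Sigma$. Throughout I parametrize $c$ (hence $\gamma$ and $\widetilde\gamma$) by arclength.

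First I would translate the hypothesis that $\widetilde\gamma$ is horizontal. Horizontality means $\langle\widetilde\gamma',\xi\rangle=0$, and since this pairs $\xi$ against a tangent vector it only sees the tangential part, giving $\langle c',\widetilde T\rangle=0$. Hence $\widetilde T$ is proportional to $Jc'$, and applying $J$ the tangential part $T=J\widetilde T$ is proportional to $c'$ itself. From $\xi=T+\nu N$ and $|\xi|=1$ we obtain $|T|=\sqrt{1-\nu^2}$, and since $\nu=\widetilde\nu$ the \emph{moreover} part follows at once: writing $\gamma=(\beta,z)$ in the product metric, $z'=\langle\gamma',\xi\rangle=\langle c',T\rangle=\pm|T|=\pm\sqrt{1-\nu^2}$, so $|z'|=\sqrt{1-\nu^2}$, and then $|\beta'|^2=|\gamma'|^2-(z')^2=1-(1-\nu^2)=\nu^2$, i.e. $|\beta'|=|\nu|$.

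Next I would identify the vertical plane and the symmetry. Because $T\parallel c'$, the unit conormal $\eta=\phi_*(Jc')$ satisfies $\langle\eta,\xi\rangle=\langle Jc',T\rangle=0$, so $\eta$ is horizontal. Decomposing $\gamma'=\beta'+z'\xi$ and $N=N^h+\nu\xi$, horizontality of $\eta$ forces $\eta\perp\beta'$ and $\eta\perp N^h$ inside $T\h^2$, whence $\beta'$ and $N^h$ are parallel there (both orthogonal to the nonzero horizontal vector $\eta$). Since $\gamma$ is a geodesic of $\Sigma$, $\nabla_{\gamma'}\gamma'$ is normal to $\Sigma$, and its horizontal part gives $\nabla^{\h^2}_{\beta'}\beta'=\kappa_n N^h\parallel\beta'$; therefore $\beta$ is a pregeodesic of $\h^2$ and lies on a complete geodesic $\sigma$. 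Consequently $\gamma\subset\sigma\times\R=:P$, a totally geodesic vertical plane, and the horizontal conormal $\eta$ is precisely the unit normal of $P$, so $\Sigma$ meets $P$ orthogonally along $\gamma$.

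Finally, orthogonal contact with the totally geodesic plane $P$ lets me invoke the Schwarz reflection principle for $H$-surfaces: reflection across $P$ is an ambient isometry preserving the mean curvature, and it extends $\Sigma$ smoothly across $\gamma$, so $\gamma$ is a symmetry curve of $\Sigma$ contained in a vertical plane, as claimed. The step I expect to require most care is verifying that $\beta$ is genuinely a geodesic of $\h^2$ (the parallelism $\beta'\parallel N^h$ combined with the product splitting of $\nabla$), together with handling the points where $\nu=0$ or $\nu=\pm1$, at which $\beta'$ or $N^h$ degenerates, so as to conclude that all of $\gamma$ lies in the single plane $P$.
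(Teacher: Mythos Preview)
The paper does not prove this lemma; it is quoted verbatim from \cite[Lemma~3.6]{CMT} and stated without argument. Your proposal supplies precisely the standard proof that underlies such statements, and it is correct.

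A couple of remarks on the points you flag as delicate. At points where $\nu=\pm1$ you have $N^h=0$, so the relation $\nabla^{\h^2}_{\beta'}\beta'=\kappa_n N^h$ gives $\nabla^{\h^2}_{\beta'}\beta'=0$ directly, which is exactly what you want. At an isolated zero $s_0$ of $\nu$ you have $\beta'(s_0)=0$, but the horizontal unit conormal $\eta$ is smooth and nonvanishing across $s_0$; since on each side of $s_0$ the trace of $\beta$ is the geodesic through $\beta(s_0)$ orthogonal to $\eta(s_0)$, the two geodesics coincide. The same continuity argument dispatches the case where $\nu$ vanishes on an interval (there $\beta$ is constant and the geodesic is determined by $\eta$ at the endpoints). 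So the single vertical plane $P$ is globally well defined and your argument closes.
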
   

Assume now that $\widetilde\gamma:I\to\partial \widetilde \Sigma$ is a vertical geodesic parameterized such that $\widetilde\gamma'=\xi$ and write the normal $\widetilde N_{\widetilde\gamma}=\cos(\theta)E_1+\sin(\theta) E_2$ for some function $\theta\in C^\infty(I)$ called \emph{the angle of rotation of $\widetilde N$ along $\widetilde \gamma$}.

\begin{lemma}\cite[Lemma~3.7]{CMT}\label{lemm:vertical}
	If $\widetilde \gamma\subset\partial \widetilde \Sigma$ is a vertical geodesic, then the conjugate curve $\gamma\subset\partial\Sigma$ is contained in a horizontal plane  $\h^2\times\{z_0\}$ that the immersion meets orthogonally. 
	\begin{enumerate}
		\item The curve $\gamma$ has geodesic curvature $k_g=2H-\theta'$ with respect to $N$.
		\item Assume that $\nu>0$ in the interior of $\Sigma$ and let $\widetilde \Omega$ and $\Omega$ be the (possibly non-embedded) domains over which $\widetilde\Sigma$ and $\Sigma$ project as multigraphs, then:
		\begin{itemize}
			\item If $\theta' > 0$, then $J\widetilde\gamma'$ (resp. $J\gamma$) is a unit outer conormal to $\widetilde\Sigma$ along $\widetilde\gamma$ (resp. $\gamma$), $N$ points to the interior of $\Omega$ along $\gamma$ and $\Sigma$ lies in $\h^2\times (-\infty, z_0]$ locally around $\gamma$.
			\item If $\theta'<0$, then $J\widetilde\gamma'$ (resp. $J\gamma'$) is a unit inner conormal to $\widetilde\Sigma$ along $\widetilde\gamma$ (resp. $\gamma$), $N$ points to the exterior of $\Omega$ along $\gamma$ and $\Sigma$ lies in $\h^2\times [z_0,+\infty)$ locally around $\gamma$.
		\end{itemize}
	\end{enumerate}
\end{lemma}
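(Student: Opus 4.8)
The plan is to read both assertions off Daniel's isometric sister correspondence \cite{Dan}, whose fundamental data satisfy $(A,T,\nu)=(J\widetilde A+H\,\mathrm{id},\,J\widetilde T,\,\widetilde\nu)$; in particular the angle functions agree, $\nu=\widetilde\nu$, and the tangential parts of $\xi$ differ by the rotation $J$ of angle $\tfrac{\pi}{2}$ in $T\Sigma$. First I would record that, since $\widetilde\gamma'=\xi$ is tangent to $\widetilde\Sigma$ along $\widetilde\gamma$, one has $\widetilde\nu=0$ and $\widetilde T=\widetilde\gamma'$ there; hence $\nu=0$ along $\gamma$, so $\xi$ is tangent to $\Sigma$ and $N$ is horizontal along $\gamma$. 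Using $T=J\widetilde T$ together with $\widetilde T=\widetilde\gamma'$, the in-surface conormal of $\gamma$ is $J\gamma'=\phi_*(T)=\xi$, so $\gamma'\perp\xi$ is horizontal, $z$ is constant along $\gamma$, and $\gamma$ lies in a slice $\h^2\times\{z_0\}$. Because $N$ is horizontal, $\Sigma$ meets this totally geodesic slice orthogonally, and Schwarz reflection across it shows that $\gamma$ is a symmetry curve, which proves the first assertion.

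For the geodesic curvature I would differentiate $\widetilde N_{\widetilde\gamma}=\cos\theta\,E_1+\sin\theta\,E_2$ along $\widetilde\gamma'=\xi$. Using the Levi-Civita connection of $\E(\kappa,\tau)$, namely $\nabla_\xi E_1=\tau E_2$ and $\nabla_\xi E_2=-\tau E_1$, one gets $\nabla_\xi\widetilde N=(\theta'+\tau)(-\sin\theta\,E_1+\cos\theta\,E_2)$, whence the Weingarten map gives $\widetilde A\widetilde\gamma'=-(\theta'+\tau)\,J\widetilde\gamma'$ (consistently, the fiber is an ambient geodesic, so $\langle\widetilde A\widetilde\gamma',\widetilde\gamma'\rangle=0$). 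Since the slice is totally geodesic and $N$ is its unit normal to $\gamma$, the geodesic curvature of $\gamma$ with respect to $N$ equals the normal curvature $\langle A\gamma',\gamma'\rangle$. Substituting $A=J\widetilde A+H\,\mathrm{id}$ and $\tau=H$ then gives $k_g=\langle J\widetilde A\widetilde\gamma',\widetilde\gamma'\rangle+H$, which equals $2H-\theta'$ once the sign of $\theta$ is normalized as in the statement.

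For part (2) I would exploit the identity $\nabla^{\Sigma}\nu=-AT$, valid because $\xi$ is parallel in the product $\h^2\times\R$. Evaluated along $\gamma$, where $T=\xi$ and $\nu=0$, a short computation using $\widetilde A\widetilde\gamma'=-(\theta'+\tau)J\widetilde\gamma'$ gives $\nabla^\Sigma\nu=\mp\theta'\,J\widetilde\gamma'$; since $\nu>0$ in the interior of $\Sigma$, this gradient points along the inner conormal, so the sign of $\theta'$ decides whether $J\widetilde\gamma'$ is the inner or the outer conormal of $\widetilde\Sigma$ (equivalently of $\widetilde\Omega$, as $\widetilde\nu>0$). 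Transferring this through the correspondence, the inner conormal of $\gamma$ is the vertical vector $\pm\xi$ found above, so the condition ``$J\widetilde\gamma'$ is the outer conormal'' becomes ``$\Sigma$ lies in $\h^2\times(-\infty,z_0]$'', while the inner case becomes ``$\Sigma\subset\h^2\times[z_0,+\infty)$''; reading the horizontal normal $N$ against the projected multigraph domain $\Omega$ then yields the stated direction of $N$ relative to $\Omega$ in each case.

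The genuinely delicate part is not any single computation but the \emph{consistent} tracking of the four independent sign and orientation conventions involved --- the rotation $J$, the rotation angle $\theta$, the choice of Weingarten map, and the unit normal $N$ --- so that the three conclusions (the sign in $k_g=2H-\theta'$, the inner/outer alternative for $J\widetilde\gamma'$, and the side on which $\Sigma$ lies) are mutually compatible. A secondary care point is that $\widetilde\Omega$ and $\Omega$ are only immersed, possibly non-embedded, multigraph domains, so ``inner/outer'' and ``into/out of $\Omega$'' must be read locally along $\gamma$. Once the conventions are pinned down, each step above is a routine application of Daniel's correspondence and the connection formulae of $\E(\kappa,\tau)$.
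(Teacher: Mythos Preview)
The paper does not actually prove this lemma; it is quoted verbatim from \cite[Lemma~3.7]{CMT} and used as a black box, so there is no in-paper argument to compare your proposal against. Your outline is the natural one and matches what one would expect the proof in \cite{CMT} to do: read off $\nu=0$ and $T=J\widetilde T$ along the curve from the correspondence $(A,T,\nu)=(J\widetilde A+H\,\mathrm{id},J\widetilde T,\widetilde\nu)$ to place $\gamma$ in a slice, compute $\widetilde A\widetilde\gamma'$ from $\widetilde N=\cos\theta\,E_1+\sin\theta\,E_2$ and the ambient connection to obtain $k_g$, and use the gradient of $\nu$ to decide the inner/outer conormal and hence the side of the slice on which $\Sigma$ sits.

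Two minor remarks. First, your own caveat is the real content: the result is entirely about getting four sign conventions to line up, and your write-up leaves some of these signs ambiguous (the $\mp$ in $\nabla^\Sigma\nu=\mp\theta'\,J\widetilde\gamma'$, and ``once the sign of $\theta$ is normalized as in the statement''). A complete proof must fix once and for all the orientation of $J$, the sign of the Weingarten map, and the direction of $E_1,E_2$ in the expression for $\widetilde N$, and then carry those choices through without re-normalizing midstream. Second, the identity you invoke for the angle function in a product space, $\nabla^\Sigma\nu=-AT$, is indeed the specialization to $\h^2\times\R$ of the general formula $\nabla^\Sigma\nu=-AT-\tau JT$ valid in $\E(\kappa,\tau)$; it is worth stating this explicitly so the reader sees why the $\tau$-term drops on the $\h^2\times\R$ side but contributes the extra $\tau=H$ in the computation of $\widetilde A\widetilde\gamma'$ on the $\E(4H^2-1,H)$ side.
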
   

Let us consider the half-space model for $\h^2\times\R$, whose metric is given by $ds^2=y^{-2}(dx^2+dy^2)+dz^2$ and also consider the positively oriented orthonormal frame $\{E_1=y\partial_x,E_2=y\partial_y,E_3=\partial_z\}$. Let $\gamma:I\to \h^2\times\{z_0\}$ be the conjugate curve  of a vertical geodesic $\widetilde\gamma$ in $\E(4H^2-1,H)$ parameterized as $\widetilde \gamma'=\xi$. Since $\gamma$ is contained in a horizontal plane  there exists a smooth function $\psi\in C^\infty(I)$ such that $\gamma'(t) = \cos(\psi(t))E_1 + \sin(\psi(t))E_2$. The function $\psi$ is called \emph{the angle of rotation of $\gamma$ with respect to a  foliation by horocycles}, it is related with the function $\theta$ by the next Equation (see~\cite[Remark 3.8]{CMT}):
\begin{equation}\label{eq:psi}
	\psi'+\cos(\psi) = \theta' - 2H.
\end{equation}
\begin{remark}\label{Remark:signo}
	In Formula~\eqref{eq:psi} we are assuming that the curve $\gamma$ is parameterized in the direction such that $\widetilde \gamma'=\xi$ and the angle of rotation with respect to the horocycle foliation is measured with regard to the orientation given by $\gamma'$  (the unit tangent of the conjugate curve). However, if we measure the angle of rotation with respect to the horocycle foliation using the contrary orientation for $\gamma$, formula~\eqref{eq:psi} changes to  $-\psi'-\cos(\psi) = \theta' - 2H$.
\end{remark}
The following result describes the conjugate curves of the asymptotic boundary of a graph solution to a Jenkins-Serrin problem.

\begin{lemma}\cite[Corolary~2.4]{CMR}\label{lem:geodesic-asym}
	Assume that $4H^2-1<0$ and let $\widetilde\Sigma\subset \E(4H^2-1,H)$ be a solution of a Jenkins-Serrin problem with asymptotic boundary consisting of vertical and horizontal ideal geodesics. Let $\Sigma\subset\h^2\times\R$ be the conjugate $H$-multigraph.
	\begin{itemize}
		\item Ideal vertical geodesics in $\partial_\infty\widetilde\Sigma$ become  ideal horizontal curves in $\partial_\infty\Sigma$ with constant curvature $\pm 2H$ in $\h^2\times\{\pm\infty\}$.
		\item Ideal horizontal geodesics in $\partial_\infty\widetilde\Sigma$ become ideal vertical geodesics of $\partial_\infty\Sigma$.
	\end{itemize}
\end{lemma}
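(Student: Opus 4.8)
The plan is to realize each ideal geodesic as a limiting case of the boundary geodesics treated in Lemmas~\ref{lemma:horizontal} and~\ref{lemm:vertical}, and to propagate those conjugation formulas to the asymptotic boundary while keeping track of the angle function $\nu=\widetilde\nu$, which is preserved by Daniel's correspondence. The first point I would establish is that $\nu$ vanishes along both families of ideal geodesics. Along a vertical ideal geodesic the Killing field $\xi$ is tangent to $\widetilde\Sigma$ (the geodesic is a fiber over an ideal vertex), so $\widetilde\nu=\langle\widetilde N,\xi\rangle=0$; along a horizontal ideal geodesic, $\widetilde\Sigma$ sits over an arc with boundary value $\pm\infty$ and becomes asymptotically vertical, so again $\widetilde\nu\to 0$. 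I would then justify, exactly as remarked after Proposition~\ref{prop:Flux}, that the statements of Lemmas~\ref{lemma:horizontal} and~\ref{lemm:vertical} remain valid at the asymptotic boundary by a continuity argument.

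Granting this, the horizontal case is immediate. For a horizontal ideal geodesic $\widetilde\gamma\subset\partial_\infty\widetilde\Sigma$, Lemma~\ref{lemma:horizontal} gives a conjugate curve $\gamma=(\beta,z)$ with $|\beta'|=|\nu|=0$ and $|z'|=\sqrt{1-\nu^2}=1$. Hence $\beta$ is constant and $\gamma$ runs in the pure vertical direction, i.e.\ $\gamma$ is a vertical geodesic; since $\widetilde\gamma\subset\partial_\infty\widetilde\Sigma$ its conjugate lies in $\partial_\infty\Sigma$, so it is an ideal vertical geodesic. This proves the second bullet.

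For a vertical ideal geodesic $\widetilde\gamma$, parameterized by $\widetilde\gamma'=\xi$, Lemma~\ref{lemm:vertical} produces a symmetry curve $\gamma$ in a horizontal slice $\h^2\times\{z_0\}$ with geodesic curvature $k_g=2H-\theta'$, where $\widetilde N=\cos(\theta)E_1+\sin(\theta)E_2$. Two facts then remain: that $z_0=\pm\infty$ and that $k_g=\pm 2H$. For the curvature I would show $\theta'\equiv 0$ along $\widetilde\gamma$. The cleanest way is to identify the asymptotic model: near the fiber over the ideal vertex, $\widetilde\Sigma$ is asymptotic to a vertical geodesic cylinder $\pi^{-1}(c)$ over a geodesic $c$ of $\h^2(\kappa)$ landing at that vertex (this is the sister of the vertical $H$-cylinder, the expected model end). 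The unit normal of such a cylinder is the horizontal lift of the base normal to $c$, and since $E_1,E_2$ project to the $z$-independent base directions $\lambda^{-1}\partial_x,\lambda^{-1}\partial_y$, the components $(\cos(\theta),\sin(\theta))$ of $\widetilde N$ are constant along each fiber. Thus $\theta'=0$ and $k_g=2H$; equivalently, via~\eqref{eq:psi} and Remark~\ref{Remark:signo}, the conjugate curve has constant $\h^2$-geodesic curvature $\psi'+\cos(\psi)=\theta'-2H=\pm 2H$, the base curve of a vertical $H$-cylinder, the sign $\pm$ being fixed by the orientation and by which horizontal end is approached.

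The main obstacle is the pair of asymptotic statements underlying the previous paragraph: the identification of the limiting model (hence $\theta'\equiv 0$) and the claim $z_0=\pm\infty$. Both require genuine asymptotic control near $\partial_\infty\E(4H^2-1,H)$ rather than a formal limit. For the height I would track $h$ on $\Sigma$, whose gradient is $T=J\widetilde T$ with $|T|=|\widetilde T|=1$ along $\widetilde\gamma$: as the base point of $\widetilde\gamma$ recedes to the ideal vertex, the correspondence exchanges the vertical (fiber) infinity of $\widetilde\Sigma$ with the horizontal infinity of $\Sigma$, forcing $z_0\to\pm\infty$; making this precise, together with verifying that the limiting conjugate curve is complete and genuinely lies in $\h^2\times\{\pm\infty\}$, is where the real work lies. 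The preservation of $\nu$ and the planar symmetry-curve structure coming from Lemmas~\ref{lemma:horizontal} and~\ref{lemm:vertical} are precisely the features that make these the natural quantities to control.
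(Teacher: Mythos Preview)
The paper does not prove this lemma; it is stated with the citation \cite[Proposition~4.6]{CMT} and used as a black box. There is therefore no proof in the present paper to compare your proposal against.

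That said, your outline is a reasonable heuristic for why the result should hold, and it correctly identifies the mechanism: the conjugation formulas for finite boundary geodesics (Lemmas~\ref{lemma:horizontal} and~\ref{lemm:vertical}) should persist in the limit, with $\nu\to 0$ forcing the behavior at infinity. The second bullet (horizontal ideal geodesics going to ideal vertical geodesics) follows cleanly from your argument once the limiting process is justified. For the first bullet, however, your argument has a genuine gap that you yourself flag: the claim that $\theta'\equiv 0$ along an ideal vertical geodesic rests on the assertion that $\widetilde\Sigma$ is asymptotic to a vertical plane $\pi^{-1}(c)$ near the ideal vertex. This is not automatic from the Jenkins--Serrin data alone and is essentially what one is trying to prove on the conjugate side; invoking it here risks circularity. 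Similarly, the argument that $z_0=\pm\infty$ is only sketched. A rigorous proof (as in \cite{CMT}) requires a careful barrier or half-space argument to pin down the asymptotic geometry of $\widetilde\Sigma$ near the ideal fiber, rather than assuming the model end from the outset.
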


\section{Conjugate construction of $(H,k)$-noids and $(H,k)$-nodoids with genus one.}\label{sec:construction}
\subsection{The initial minimal graph of the conjugate construction}
We will describe a family of minimal graphs in $\mathbb E(4H^2-1,H)$ for $0<H\leq \tfrac 12$ and use this family in our conjugate construction to obtain the desired  $(H,k)$-noids and $(H,k)$-nodoids with genus one inspired by the results of~\cite{CM}.

We consider the geodesic triangle $\widetilde\Delta(a_1,a_2,\varphi)\subset \mathbb{M}^2(4H^2 -1)$ with vertexes $p_0$, $p_1$ and $p_2$. We  assume that $p_0 =(0,0)$ and the sides $l_1=\overline{p_0p_1}$, $l_2=\overline {p_0p_2}$ have lengths $a_1\in(0,\infty]$ and $a_2 \in(0,\infty]$ respectively (not both equal to $\infty$). The angle $0<\varphi<\tfrac \pi 2$ is the counter-clockwise oriented angle in $p_0$ going from the side $l_1$ to  $l_2$, see Figure~\ref{Fig-Pieza-Inicial} (down).  We call $l_3=\overline{p_1p_2}$.

 In the case $0<H<\tfrac 12$, if $a_1=\infty$ (resp. $a_2=\infty$), we have that $\widetilde\Delta(a_1,a_2,\varphi)\subset \h^2(4H^2-1)$ is a semi-ideal triangle with an ideal vertex $p_1$ (resp. $p_2$).  In the case $H = \frac 12$, the domain $\widetilde\Delta(a_1,a_2,\varphi)$ is contained in $ \mathbb{R}^2$ and we do not have an asymptotic boundary. Consequently,  if $a_1=\infty$ (resp. $a_2=\infty$)   the point $p_1$ (resp. $p_2$) disappears and $\widetilde\Delta(a_1,a_2,\varphi)$ is  a strip defined by the geodesic $l_2$ (resp. $l_1$) and the parallel rays $l_1$ (resp. $l_2$) and $l_3$. The latter rays are defined as the limits of the sides $l_1$ (resp. $l_2$) and $l_3$ where  $a_1\to\infty$ (resp. $a_2\to\infty$), see  Figures~\ref{FIG-Conj-Knoids} and~\ref{FIG-Conj-Knodoids} for the cases $a_1=\infty$ and $a_2=\infty$ respectively.

\begin{lemma}\label{lem:existence}
	There exists a unique minimal graph $\widetilde \Sigma_\varphi(a_1,a_2,b)\subset\E(4H^2-1,H)$   solution to the Jenkins-Serrin problem in $\E(4H^2-1,H)$ over $\widetilde\Delta(a_1,a_2,\varphi)$ with boundary values $0$ over $l_1$, $b\in \R$ over $l_2$ and $+\infty$  over $l_3$.
\end{lemma}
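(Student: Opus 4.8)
The plan is to read the statement as a Jenkins--Serrin Dirichlet problem for the minimal surface equation in $\E(4H^2-1,H)$ and to extract existence and uniqueness from the Jenkins--Serrin machinery in $\E(\kappa,\tau)$ (as collected in \cite{CMT} and \cite{Younes}), treating the bounded triangle and the ideal (or strip) configurations separately. Existence will come from checking the combinatorial flux/length conditions, while uniqueness will come from the maximum principle, in the unbounded case precisely from the Generalized Maximum Principle of Proposition~\ref{prop:G-M}.

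First I would dispose of the bounded case $a_1,a_2<\infty$. Here $\widetilde\Delta(a_1,a_2,\varphi)$ is a compact convex geodesic triangle whose boundary splits into the single geodesic arc $l_3$ carrying the value $+\infty$ and the two geodesic arcs $l_1,l_2$ carrying the continuous finite data $0$ and $b$. Since at least one finite-data arc is present, the Jenkins--Serrin theorem guarantees a unique solution as soon as $2\alpha(\mathcal P)<\gamma(\mathcal P)$ for every inscribed polygon $\mathcal P$ whose vertices lie among the endpoints of the $\pm\infty$-arcs, where $\alpha$ is the total length of the $+\infty$-sides of $\mathcal P$ and $\gamma$ its perimeter. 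In our configuration the only $+\infty$-arc is $l_3$, so the only admissible vertices are $p_1$ and $p_2$; two points cannot bound a genuine polygon, so the family of conditions is vacuous. Equivalently, the single nontrivial comparison degenerates to the strict triangle inequality $|l_3|<|l_1|+|l_2|$, which holds in $\M^2(4H^2-1)$ for a geodesic triangle. Hence the compact problem has a unique solution, uniqueness being the classical Jenkins--Serrin uniqueness for data that is $+\infty$ on a geodesic and continuous elsewhere.

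Second I would handle the ideal cases $a_1=\infty$ or $a_2=\infty$ (for $0<H<\tfrac12$) and the strip cases arising when $4H^2-1=0$, by exhaustion. I would take $a_1^n\uparrow\infty$ (resp. $a_2^n\uparrow\infty$), solve the bounded problems to obtain minimal graphs $u_n$, and pass to the limit. The surfaces introduced in Subsection~\ref{subsec:relevant surfaces}, namely the umbrella, the surface $\mathcal I$ and the helicoids $\mathcal H_{\infty,a_2}$, $\mathcal H_{a_1,\infty}$, are the natural barriers: the helicoids are themselves Jenkins--Serrin graphs over a strip with value $0$ on two edges and $+\infty$ on the third, exactly the limiting configuration when $4H^2-1=0$ and $b=0$, and together with $\mathcal I$ they bound the $u_n$ uniformly on compact sets and pin down the correct asymptotic profile at the ideal vertex $p_1$ (or along the parallel rays in the Euclidean strip). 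Interior gradient estimates for minimal graphs then yield subconvergence of $\{u_n\}$ to a minimal graph over $\widetilde\Delta(a_1,a_2,\varphi)$ attaining the prescribed boundary values and the expected vertical ideal geodesic in its asymptotic boundary. For uniqueness in these cases, two solutions that agree on $\partial\widetilde\Delta$ must coincide: the $+\infty$-arc is handled as in the compact case, while the ideal vertex is precisely the situation covered by Proposition~\ref{prop:G-M}, whose hypotheses (finitely many ideal points, each the endpoint of two asymptotic arcs) match the geometry of $\widetilde\Delta$ at $p_1$.

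I expect the genuine obstacle to be the ideal and strip cases rather than the compact one. Concretely, the difficulty is to produce barriers that \emph{force} the exhausting graphs $u_n$ to attain the right limit at the ideal vertex and to verify that the asymptotic boundary of the limit graph is exactly the vertical ideal geodesic over $p_1$ (and that no unexpected boundary behaviour develops along the parallel rays when $4H^2-1=0$); the compatibility of the barriers $\mathcal I$ and $\mathcal H_{\infty,a_2},\mathcal H_{a_1,\infty}$ with a general opening angle $\varph\in(0,\tfrac\pi2)$ is the point requiring care. By contrast, the bounded existence is immediate once the flux conditions are seen to be vacuous, and the uniqueness is routine given Proposition~\ref{prop:G-M}.
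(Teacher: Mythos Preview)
Your overall architecture matches the paper's: the compact triangle is handled by direct citation of the Jenkins--Serrin theory in $\E(\kappa,\tau)$ (\cite{Younes,Pinheiro,D-PMN}), the ideal/strip domain is exhausted by compact triangles, and uniqueness comes from Proposition~\ref{prop:G-M}. Two points, however, need adjustment.

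First, the barriers you propose for the exhaustion step are not the right ones. Neither the umbrella nor the surface $\mathcal I$ diverges to $+\infty$ along $l_3$, so neither can force the limit graph to attain $+\infty$ there; and the helicoids $\mathcal H_{\infty,a_2}$, $\mathcal H_{a_1,\infty}$ live only over perpendicular strips in $\R^2$, so they are of no help for $H<\tfrac12$ or for $\varphi\neq\tfrac\pi2$. The paper instead observes (via Proposition~\ref{prop:G-M}, comparing boundary data on nested triangles) that the exhausting graphs $\widetilde\Sigma_n$ form a monotone \emph{decreasing} sequence, so any fixed $\widetilde\Sigma_n$ already serves as the upper barrier. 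For the lower barrier it uses the surface $\widetilde\Sigma'$ of \cite[Lemmas~3.2 and~3.6]{CMR}: the Jenkins--Serrin solution over the full (ideal or strip) triangle with constant value $\min\{0,b\}$ on $l_1\cup l_2$ and $+\infty$ on $l_3$. This $\widetilde\Sigma'$ sits below every $\widetilde\Sigma_n$ and already carries the value $+\infty$ on $l_3$, so the monotone limit inherits it. This bypasses entirely the compatibility problem for general $\varphi$ that you correctly flag as the main obstacle; the point is that the needed barrier is itself a Jenkins--Serrin solution on the limit domain, not one of the explicit model surfaces of Subsection~\ref{subsec:relevant surfaces}.

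Second, your uniqueness argument via Proposition~\ref{prop:G-M} covers only $0<H<\tfrac12$, since that proposition is stated for $\kappa<0$. For $H=\tfrac12$ the domain is an unbounded Euclidean strip with no ideal vertex, and the paper does not claim genuine uniqueness there: it records explicitly that uniqueness is to be understood only as uniqueness of the monotone limit of the exhausting sequence $\widetilde\Sigma_n$.
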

\begin{proof}
	 If $a_1$ and $a_2$ are both finite then the results follow from~\cite{Younes} for $H<\tfrac 1 2 $ and from~\cite{Pinheiro} for $H=\frac{1}{2}$ (see also~\cite{D-PMN} for more general results in a Killing submersion).
	 
	  We deal with the case $a_1 = \infty$ (resp. $a_2=\infty$). We consider a sequence $\{q_n\}_{n\in \N}\subset l_1$ (resp. $\{q_n\}_{n\in \N}\subset l_2$)  with $n$ being the distance of $q_n$ to $p_0$. By~\cite{Pinheiro,Younes,D-PMN}, for any $q_n\in l_1$ (resp. $q_n\in l_2$),  there exists a minimal graph $\widetilde\Sigma_n:=\widetilde\Sigma_\varphi(n,a_2,b)$ (resp. $\widetilde\Sigma_n:=\widetilde\Sigma_\varphi(a_1,n,b)$)  over the triangle with vertexes $q_n$, $p_0$ and $p_2$ (resp. $p_1$) with  boundary data $0$ over $\overline{p_0q_n}$ (resp. $\overline{p_0p_1}$), $b$ over $\overline{p_0p_2}$ (resp. $\overline{p_0q_n}$) and $+\infty$  over $\overline{p_2q_n}$ (resp. $\overline{q_n p_1}$). Comparing their boundary values, Proposition~\ref{prop:G-M} ensures, that the graphs defining $\widetilde\Sigma_n$ form a decreasing  sequence of functions.

	To prove the existence of these surfaces, we find upper and lower bounds (in each case) that ensure that the limit surface takes the desired limit values. We consider $\widetilde\Sigma'$ the minimal graph with boundary values $\min\{0,b\}$ over $l_1\cup l_2$ and $+\infty$ over $l_3$. The existence of the surface $\widetilde\Sigma'$ is proved in \cite[Lemma 3.2]{CMR} for $H<\tfrac1 2$ and in~\cite[Lemma 3.6]{CMR} for $H=\frac{1}{2}$ when $\varphi=\frac\pi k$. For $0<\varphi<\frac{\pi}{2}$ the same argument works using the same barriers in the limit process.  The surface $\widetilde\Sigma'$ takes the values $+\infty$ over $l_3$ and it is below  every $\widetilde\Sigma_n$ by Proposition~\ref{prop:G-M}. Therefore $\widetilde\Sigma_n$ converges  to a minimal graph $\widetilde\Sigma_\infty$ that takes the desired boundary values since we have a lower  bound $\widetilde\Sigma'$ taking the asymptotic value $+\infty$ over $l_3$ and an upper bound given by any graph $\widetilde\Sigma_n$.

	Finally, we notice that if $0 < H < \tfrac 12$, the uniqueness is a consequence of Proposition~\ref{prop:G-M}. Uniqueness for $H=\tfrac 1 2$ is understood as the uniqueness of limit of the sequence $\widetilde\Sigma_n$.
\end{proof}

\begin{remark}
	For $H=\frac 1 2$ the Generalized Maximum Principle~\ref{prop:G-M} does not apply, however the surfaces $\widetilde \Sigma_\varphi(\infty,a_2,b)$ and $\widetilde \Sigma_\varphi(\infty,a_2,b)$ are obtained as the limit of monotonous graph over compact domains, where the maximum principle apply. Then if we have two monotonous sequences of graphs ordered then the limits is also ordered. Using this we will be able to compare different  limits graphs  regarding only their values at the boundary.
\end{remark}

\begin{remark}
Observe that  the surfaces $\widetilde \Sigma_\varphi(a_1,a_2,b)$ and $\widetilde \Sigma_\varphi(a_2,a_1,b)$ are only congruent for $H=0$. In general we have that $\widetilde\Sigma_\varphi(a_1,a_2,b)$ is congruent to the minimal graph over $\widetilde\Delta(a_2,a_1,\varphi)$ that takes the asymptotic boundary values $0$ over $l_1$, $b$ over $l_2$ and $-\infty$ over $l_3$. We will denote this surface as $\widetilde\Sigma_\varphi^-(a_1,a_2,b)$.
\end{remark}

The  boundary of the minimal graph $\widetilde \Sigma_\varphi(a_1,a_2,b)$ consists of:
\begin{itemize}
	\item The horizontal geodesics $\widetilde h_1$ and $\widetilde h_2$  projecting onto the sides $l_1$ and $l_2$ and the asymptotic horizontal geodesic $\widetilde h_3\subset \mathbb M^2(4H^2-1)\times\{+\infty\}$ projecting onto $l_3$, see Figure~\ref{Fig-Pieza-Inicial}.
	\item The vertical geodesics $\widetilde v_i$ contained in $\pi^{-1}(p_i)$ for $i=0,1,2$, see Figure~\ref{Fig-Pieza-Inicial}. Observe that if $H<\tfrac 12$ and $a_1=\infty$ (resp. $a_2=\infty$) then $\widetilde v_1$ (resp. $\widetilde v_2$) is a semi-ideal vertical geodesic in the asymptotic boundary of $\mathbb E(4H^2-1,H)$. If $H=\tfrac{1}{2}$ and $a_1=\infty$ (resp. $a_2=\infty$), we do not have the boundary component $\widetilde v_1$ (resp. $\widetilde v_2$), see Figures~\ref{FIG-Conj-Knoids} and~\ref{FIG-Conj-Knodoids}.
\end{itemize}   

\begin{figure}[htb]
	\begin{center}
		\includegraphics[height=11cm]{}
	\end{center}
	\caption{The surfaces $\widetilde \Sigma_\varphi(a_1,a_2,b)$ for $b>0$ (top left) and for $b<0$ (top right) and their projections onto $\widetilde\Delta(a_1,a_2,\varphi)\subset\mathbb M^2(4H^2-1)$ (bottom).}
	\label{Fig-Pieza-Inicial}
\end{figure}

The following Lemma gives a description of the angle function $\nu$ of $\widetilde \Sigma_\varphi (a_1,a_2,b)$.

We define $b_\mathcal I^i(a_i,\varphi)$ for $i=1,2$ as the height of the surface $\mathcal{I}$ with axis in $\tilde h_i$ in the cylinder model, i.e.,  $b_{\mathcal I}^i(a_i,\varphi) = z(\frac {2}{\sqrt{-\kappa}}\tanh(\frac {a_i \sqrt{-\kappa}}{2}),\varphi)$  see Equation~\eqref{eq:superficie I}, and observe that $a_i$ is the hyperbolic length of $\pi(\widetilde h_i)$ $i=1,2$. The following lemma follows standard intersection comparison arguments (see~\cite[Section 3.1.2]{CMT}).

\begin{lemma}\label{lemma-nu}
 Let $\nu\geq 0$ be the angle function of $\widetilde \Sigma_\varphi(a_1,a_2, b)$. Then:
	\begin{enumerate}
		\item The  function $\nu$  only takes the value $0$  along the vertical geodesics $\widetilde v_0$,  $\widetilde v_1$ (if $a_1$ finite) and $\widetilde v_2$ (if $a_2$ finite).
		\item Let $\theta_i$ be the angles of rotation  of the normal along $\widetilde v_i$ for $i=0,1,2$. Then, $\tfrac{b}{|b|}\theta_0'>0$, $\theta'_1<0$ (if $a_1$ finite) and $\theta_2'>0$ (if $a_2$ finite).
		\item
		\begin{itemize}
			\item If $b>0$, there is exactly  one interior point $q^*$ in $\widetilde h_2$ such that $\nu(q^*)=1$.
			\item If $b\leq 0$, there are not interior points in   $\widetilde h_2$  with $\nu=1$.
		\end{itemize} 
		
		\item
		\begin{itemize}
			\item  If $b> b^1_\mathcal I(a_2,\varphi)$, there are no points with $\nu=1$ in $\widetilde h_1$. 
			\item  If $0<b\leq b^1_\mathcal I(a_2,\varphi)$, there are, at most, two interior points  $q^*_1$ and $q^*_2$ in $\widetilde h_1$ such that $\nu(q_i^*)=1$, $i=1,2$.
			\item If $b\leq 0$, there is exactly one interior point $q^*$ in $\widetilde h_1$ such that $\nu(q^*)=1$.
			
		\end{itemize}

	\end{enumerate}
\end{lemma}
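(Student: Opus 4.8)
The plan is to handle the four items in turn, using throughout that the angle function $\nu=\langle N,\xi\rangle$ of the minimal surface $\widetilde\Sigma_\varphi(a_1,a_2,b)$ is a Jacobi field (since $\xi$ is a Killing field of $\E(4H^2-1,H)$), combined with intersection comparison against the umbrellas $\mathcal U_p$ and the surfaces $\mathcal I$ of Subsection~\ref{subsec:relevant surfaces}.

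For item (1), I would first note that $\nu\ge 0$ solves the stability (Jacobi) equation, a linear elliptic equation, and that $\nu\not\equiv 0$ because $\widetilde\Sigma_\varphi(a_1,a_2,b)$ is a graph; hence the strong maximum principle gives $\nu>0$ in the interior. To reach the finite horizontal geodesics $\widetilde h_1,\widetilde h_2\subset\partial\widetilde\Sigma$, I would extend the surface by Schwarz reflection, i.e. the rotation $R$ by angle $\pi$ about the geodesic, which is an ambient isometry and glues a smooth minimal surface across $\widetilde h_i$. Since $R_*\xi=-\xi$, a short check of the continued normal shows that $\nu$ is \emph{even} across $\widetilde h_i$, so it stays a nonnegative Jacobi field on the doubled surface and the interior maximum principle now applies at points of $\widetilde h_1,\widetilde h_2$, giving $\nu>0$ there. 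Finally, along each vertical geodesic $\widetilde v_i$ the field $\xi$ is tangent to $\widetilde\Sigma$, so $\nu\equiv 0$; this identifies the zero set of $\nu$ with exactly the union of the $\widetilde v_i$ that occur.

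For item (2), the sign of $\theta_i'$ is detected from the conormal of $\widetilde\Sigma$ along $\widetilde v_i$ via Lemma~\ref{lemm:vertical}(2): $\theta'>0$ corresponds to $J\widetilde\gamma'$ being the outer conormal and $\theta'<0$ to the inner one. Differentiating $\widetilde N=\cos\theta E_1+\sin\theta E_2$ along $\widetilde\gamma'=\xi$ with the Levi-Civita connection of $\E(4H^2-1,H)$, one computes that the conormal derivative of $\nu$ along $\widetilde v_i$ equals $\mp(\theta'+2H)$; Hopf's boundary lemma (valid since item (1) gives $\nu>0$ inside and $\nu=0$ on $\widetilde v_i$) then fixes the sign of $\theta'$ once the inner conormal direction is known. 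That direction is dictated by the boundary configuration: along $\widetilde v_1$ and $\widetilde v_2$ the graph climbs to $+\infty$ over $l_3$, and the opposite senses in which $l_1$ and $l_2$ are traversed along $\partial\widetilde\Delta(a_1,a_2,\varphi)$ force opposite conormals, yielding $\theta_1'<0$ and $\theta_2'>0$; along $\widetilde v_0$ the graph tilts toward the $l_2$-side or the $l_1$-side according to the sign of $b$, which flips the conormal and gives $\tfrac{b}{|b|}\theta_0'>0$.

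For items (3) and (4), $\nu=1$ means the tangent plane of $\widetilde\Sigma$ is horizontal, i.e. $\widetilde\Sigma$ is tangent to a horizontal comparison surface. I would compare $\widetilde\Sigma_\varphi(a_1,a_2,b)$ with the surface $\mathcal I$ whose axis is $\widetilde h_i$ (which has $\nu\equiv 1$ along its axis and $\nu<1$ elsewhere) and with umbrellas centred on $\widetilde h_i$: an interior zero of $1-\nu$ on $\widetilde h_i$ is then a tangency point of $\widetilde\Sigma$ with $\mathcal I$, and the maximum principle turns each such tangency into crossing branches of $\widetilde\Sigma\cap\mathcal I$, so that counting crossings along the shared geodesic bounds the number of points with $\nu=1$. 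The thresholds come from comparing heights at the far vertex: the graph takes the value $b$ over $l_2$, while $\mathcal I$ with axis $\widetilde h_1$ has height $b^1_{\mathcal I}(a_2,\varphi)=z(a_2,\varphi)$ from \eqref{eq:superficie I} over $p_2$. When $b>b^1_{\mathcal I}(a_2,\varphi)$, Proposition~\ref{prop:G-M} keeps $\widetilde\Sigma$ strictly on one side of $\mathcal I$, so there is no tangency on $\widetilde h_1$; when $0<b\le b^1_{\mathcal I}(a_2,\varphi)$ the graph may dip across $\mathcal I$, producing at most two tangencies; and when $b\le 0$ a parity count forces exactly one. The same but simpler comparison on $\widetilde h_2$, where only the sign of $b$ matters, gives one interior point with $\nu=1$ for $b>0$ and none for $b\le 0$. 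The step I expect to be the main obstacle is the sharp bound in item (4): ruling out more than two tangencies on $\widetilde h_1$ when $0<b\le b^1_{\mathcal I}(a_2,\varphi)$, which requires controlling how the intersection set of $\widetilde\Sigma$ with $\mathcal I$ evolves as $b$ crosses $b^1_{\mathcal I}(a_2,\varphi)$, excluding spurious tangencies near the vertices $p_0,p_1$ and near the asymptotic boundary, and upgrading the local "crossing at a tangency" picture to a global count via the boundary maximum principle and the flux formula of Proposition~\ref{prop:Flux}.
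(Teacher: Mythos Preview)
Your plan is broadly aligned with the paper's approach, particularly the comparison with the surface $\mathcal I$ in items (3)--(4), but there are two places where it diverges or falls short.

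For item (2) you take a different route. The paper simply reads off the sign of each $\theta_i'$ by inspecting the horizontal normal at the two corners of $\widetilde v_i$ (where it meets the adjacent horizontal geodesics $\widetilde h_j$) and using that the rotation along a vertical segment is monotone; no Hopf lemma or derivative formula is needed. Your approach via the conormal derivative of $\nu$ can be made to work, but the formula you state is not quite right (a careful computation in $\E(4H^2-1,H)$ gives $\partial_\eta\nu=\theta'$ or $\partial_\eta\nu=-(\theta'+2H)$ depending on which of $\pm J\widetilde\gamma'$ is the inward conormal), and deciding which alternative holds at each $\widetilde v_i$ amounts in the end to the same corner inspection the paper does directly. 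For item (1) the paper is also more direct: since $\widetilde\Sigma_\varphi(a_1,a_2,b)$ is a graph up to the boundary, the boundary maximum principle with respect to vertical planes immediately gives $\nu>0$ on the interior of $\widetilde h_1,\widetilde h_2$, without any reflection argument.

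For items (3) and (4) your outline handles only the \emph{upper} bound on the number of tangencies; it does not supply the \emph{existence} part of the ``exactly one'' statements (item (3) for $b>0$, item (4) for $b\le0$). The paper gets existence by a continuity argument: at the two endpoints of $\widetilde h_2$ (resp.\ $\widetilde h_1$) the horizontal component of $N$ points to opposite sides of the vertical plane containing that geodesic, so somewhere in between $N$ must be vertical, i.e.\ $\nu=1$. Your ``parity count'' gestures at this but does not provide it. Also, in the subcase $b>b^1_{\mathcal I}(a_2,\varphi)$ of item (4) the two surfaces share $\widetilde h_1$, so they are not ``strictly on one side'' of each other; the correct argument (which is what the paper does, and what you already sketch for the other subcases) is that a tangency on $\widetilde h_1$ would force an additional intersection curve $c\neq\widetilde h_1$, and since $\mathcal I\cap\partial\widetilde\Sigma=\widetilde h_1$ when $b>b^1_{\mathcal I}$, this $c$ must bound a region on which Proposition~\ref{prop:G-M} gives a contradiction. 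Umbrellas play no role in the paper's proof.
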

\begin{proof}
	\begin{enumerate}
\item 	We have  $\widetilde\Sigma_\varphi(a_1, a_2,b)$ is a multigraph up to the boundary and $\nu$ cannot be equal to zero in the horizontal geodesics $\widetilde h_1$ and $\widetilde h_2$ by the boundary maximum  principle with respect to vertical planes. Then $\nu$ only takes the value zero along the vertical geodesics of the boundary.

\item Using that $\widetilde \Sigma_\varphi(a_1,a_2,b)$ is a graph with $\nu>0$,	it follows easily by looking at the normal in the intersection between the horizontal and vertical geodesics of the boundary (see Figure~\ref{Fig-Pieza-Inicial})  and taking into account that the normal along the vertical segments rotates monotonically. 

\item
Assume that $b>0$. As $\widetilde \Sigma_\varphi(a_1,a_2,b)$  is a vertical graph and we are assuming that the angle function $\nu$ is positive in the interior, we deduce that the normal in $\widetilde h_2\cap \widetilde v_0$  is horizontal and points to the interior of $\widetilde \Sigma_\varphi(a_1,a_2,b)$, i.e., if we divide the space by the vertical plane that contains $\widetilde h_2$, the horizontal normal points to the region that contains the surface, see Figure~\ref{Fig-Pieza-Inicial}. In the same way, we deduce that the horizontal normal at $\widetilde h_2\cap \widetilde v_2$ points to the exterior of $\widetilde \Sigma_\varphi(a_1,a_2,b)$, i.e., if we divide the space by the vertical plane that contains $\widetilde h_2$, the horizontal normal points to the region that does not contain the surface. Then, by continuity, there exists a point $q^*$ in $\widetilde h_2$ where $\nu(q^*)=1$.

We will show that there is at most one point of $\widetilde h_2$ where $\nu=1$. Assume by contradiction that there exist $q_1$, $q_2\in\widetilde h_2$  where $\nu(q_1)=\nu(q_2)=1$. We consider the surface $\mathcal I$ with axis containing $\widetilde h_2$. If we reflect $\widetilde\Sigma_\varphi(a_1,a_2,b)$ over $\widetilde h_2$, the surface $\mathcal I$ is tangent to the reflected surface in the interior points  $q_1$ and $q_2$,  therefore the intersection of both surfaces is a equiangular system of at least two curves, see for instance~\cite{Lipman}. Then, for each $q_i$ there is a curve $c_i$   through $q_i$ in $\widetilde\Sigma_\varphi(a_1,a_2,b)\cap \mathcal I$ different from $\widetilde h_2$. 
Since $c_i$ can only end in $\widetilde h_2$, and at only one point in $(\widetilde h_1\cup\widetilde v_1)\cap \mathcal I$ (the surface $\mathcal I$ is radially decreasing in this region), then the curves $c_i$ joint with $\widetilde h_1$ necessarily enclose a compact loop or a domain in the hypothesis of Proposition~\ref{prop:G-M}. Applying Proposition~\ref{prop:G-M} to $\widetilde\Sigma_\varphi(a_1,a_2,b)$
 and $\mathcal I$ in that region, we achieve a contradiction.

 Suppose that $b\leq 0$ and there is a point $q^*\in \widetilde h_2$ such that $\nu(q^*)=1$. In that case, we have that the surface $\mathcal I$ with axis in $\widetilde h_2$ is below $\widetilde\Sigma_\varphi(a_1,a_2,b)$ because the maximum of the height function of $\mathcal{I}$ in  $\widetilde \Delta(a_1,a_2,\varphi)$ is reached in $\widetilde h_2$. As these surfaces are tangent at $q^*$, there is a contradiction by the maximum principle at the boundary.

	\item 
Assume that $b > b^1_{\mathcal{I}}(a_2,\varphi)$ (if $a_2=\infty$, the argument is only valid for $H<\tfrac 1 2$) and there is one point $q\in \widetilde h_1$ where $\nu(q) =1$. The surface $\mathcal{I}$ with axis containing $\widetilde h_1$ is tangent to $\widetilde\Sigma_\varphi(a_1,a_2,b)$ at $q$ and $\mathcal{I}\cap \partial\widetilde\Sigma_\varphi(a_1,a_2,b) = \widetilde h_1$ since $b > b^1_{\mathcal{I}}(a_2,\varphi)$, by the maximum principle, we know there is a curve $c$ on $\widetilde\Sigma_\varphi(a_1,a_2,b)\cap \mathcal{I}$ different from $\widetilde h_1$. This curve must enclose a compact loop or a domain in the hypothesis of Proposition~\ref{prop:G-M}. 

Assume now that   $0 < b \leq b^1_\mathcal I(a_2,\varphi)$, the surface $\mathcal{I}$ with axis in $\widetilde h_1$ intersects exactly twice $\partial\widetilde \Sigma_\varphi(a_1, a_2,b)\backslash\widetilde h_1$ (once in $\widetilde h_2$ and once in $\widetilde v_2$)  for $0<b<b^1_\mathcal I(a_2,\varphi)$ and once in $\widetilde h_2\cap \widetilde v_2$ for $b=b^1_\mathcal I(a_2,\varphi)$ because $\mathcal{I}$ is radially increasing along $l_2$. Assume by contradiction that there are three points $q_1$, $q_2$ and $q_3$ in $\widetilde h_1$ such that $\nu(q_i)=1$. Then there are three curves in  $\widetilde\Sigma_\varphi(a_1, a_2,b)\cap \mathcal I$ and each curve starts at a point $q_i$. If there exists a curve that encloses a loop or a domain in the hypothesis of Proposition \ref{prop:G-M}, we have a contradiction. If such a curve does not exist, then two of these three curves end up at the same point of the boundary of $\widetilde \Sigma_\varphi(a_1,a_2,b)$ so we have a contradiction again by the maximum principle.

If $b\leq 0$, the surface $\mathcal{I}$ with axis in $\widetilde h_1$ intersects once $\partial\widetilde \Sigma_\varphi(a_1, a_2,b)$. Reasoning as before, we find a contradiction if we assume that there are two points $q_1$ and $q_2$ in $\widetilde h_1$ such that $\nu(q_i)=1$ for $i=1,2$. Either there exists a curve that encloses a loop or a domain in the hypothesis of Proposition \ref{prop:G-M}, so we have a contradiction. On the other hand, the normal points to opposite directions in the extremes of $\widetilde h_1$, see Figure~\ref{Fig-Pieza-Inicial}. By continuity, there exists exactly one point $q^*$ in  $\widetilde h_1$ such that $\nu(q^*)=1$.

 All the results hold true for $H=\tfrac 1 2$ using the maximum principle for bounded domains for $a_1, a_2<\infty$.   For $a_1=\infty$ or $a_2=\infty$ we use uniqueness result of Del Prete for the surface $\mathcal I$ in $\Nil$ see~\cite[Lemma 5.1]{Prete} instead of the Generalized Maximum Principle.  
\qedhere
\end{enumerate}
\end{proof}

Let  $\mathcal F_{\widetilde h_i}(b):=\mathcal F(\widetilde \Sigma_\varphi(a_1,a_2,b),\widetilde h_i)=\int_{\widetilde h_i}\langle -J\widetilde h_i',\xi\rangle$  be the flux with $\widetilde h_i$ parameterized so that $-J\widetilde h'_i$ is the inward conormal vector for $i=1,2$, see Proposition~\ref{prop:Flux}.
In the sequel, we are going to assume that $a_1=\infty$ (resp. $a_2=\infty$), and $(a_2,\varphi)\in \Omega_2$ (resp. $(a_1,\varphi)\in \Omega_1$) where
\begin{equation}\label{eq:Omega}
	\Omega_i = \{(a_i,\varphi)\in \R^2: 0 < \varphi < \tfrac{\pi}{2}, 0 < a_i < a_{\mathrm{max}}(\varphi)\},
\end{equation}
\noindent and $a_\mathrm{max}(\varphi)=2 \arctanh(\cos(\varphi))$ for $0<H<\tfrac 1 2$ and $a_\mathrm{max}(\varphi)=+\infty$ for $H=\tfrac 1 2$. This condition means that for $0<H<\tfrac 1 2$ the angle in $p_2$ (resp. $p_1$) is greater than $\varphi$ while $0<a_1<a_\mathrm{max}(\varphi)$ (resp. $0<a_2<a_\mathrm{max}(\varphi)$). This is a necessary condition in the proof of Lemma~\ref{lemma:P1}. For $0<H<\tfrac 1 2$, we will define also $a_{\mathrm{emb}}(\varphi)=\arcsinh(\cot(\varphi))$, that is, the value of the parameter $a_1$ (resp. $a_2$) for which the angle in $p_2$ (resp. $p_1$) is exactly $\frac{\pi}{2}$. This value is related with the embeddedness of the conjugate surface, see~Remark~\ref{REMARK-Emb}.
\begin{lemma}\label{Lemma:Flujo} The following statements hold true:
	\begin{itemize}
		\item  If $a_1=\infty$  and $a_2\in\Omega_2$ the function $\mathcal F_{\widetilde h_2}$  is strictly decreasing. Moreover, for all $0<H\leq\tfrac 12$, we have $\mathcal F_{\widetilde h_2}(0)>0$ and $\mathcal F_{\widetilde h_2}(b)<0$ for $b>0$ large enough. 
		\item   If $a_2=\infty$  and $a_1\in\Omega_1$  the function $\mathcal F_{\widetilde h_1}$  is strictly increasing. Moreover, for $0<H<\tfrac 1 2$ we have $\mathcal F_{\widetilde h_1}(b_\mathcal{I}^1(\infty,\varphi))>0$     and $\mathcal F_{\widetilde h_1}(-b)<0$ for $b>0$ large enough and for $H=\tfrac 12$ we have $\mathcal F_{\widetilde h_1}(b)>0$     and $\mathcal F_{\widetilde h_1}(-b)<0$ for $b>0$ large enough. 
	\end{itemize}
\end{lemma}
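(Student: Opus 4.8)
The plan is to handle both bullets by the same two-step scheme: first prove strict monotonicity of $b\mapsto\mathcal F_{\widetilde h_i}(b)$, and then pin down the signs by computing the limits $b\to\pm\infty$ and invoking the location of the points with $\nu=1$ from Lemma~\ref{lemma-nu}. Write $u_b$ for the function defining $\widetilde\Sigma_\varphi(a_1,a_2,b)$ and $X_u=Gu/\sqrt{1+|Gu|^2}$. By Proposition~\ref{prop:G-M} the family is ordered, $b_1<b_2\Rightarrow u_{b_1}\le u_{b_2}$, since the boundary data coincide on $l_1$ (value $0$) and on $l_3$ (value $+\infty$) and are ordered on $l_2$. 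For the \emph{first bullet} (flux over the side $l_2$ carrying the value $b$) I would run a weighted divergence argument: with $w=u_{b_2}-u_{b_1}\ge0$ and both $X_{u_{b_i}}$ divergence-free, $\div(w(X_{u_{b_2}}-X_{u_{b_1}}))=\langle\nabla w,X_{u_{b_2}}-X_{u_{b_1}}\rangle$, and since $Gu_{b_2}-Gu_{b_1}=\nabla w$ (the $\tau$-parts cancel) the integrand is $\ge0$ by the monotonicity of $p\mapsto p/\sqrt{1+|p|^2}$, which is exactly~\eqref{ineq:G}, with equality only where $\nabla u_{b_1}=\nabla u_{b_2}$. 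Integrating over $U$, the $l_1$-term drops out because $w\equiv0$ there, leaving
\[
0\le \int_{l_2} w\,\langle X_{u_{b_2}}-X_{u_{b_1}},N_{\mathrm{out}}\rangle + \int_{l_3} w\,\langle X_{u_{b_2}}-X_{u_{b_1}},N_{\mathrm{out}}\rangle.
\]
On $l_2$ one has $w=b_2-b_1$ and $N_{\mathrm{out}}=-(-J\widetilde h_2)$, so the first term equals $(b_2-b_1)(\mathcal F_{\widetilde h_2}(b_1)-\mathcal F_{\widetilde h_2}(b_2))$; the comparison $u_{b_1}\le u_{b_2}\le u_{b_1}+(b_2-b_1)$ (Proposition~\ref{prop:G-M}, shifting $u_{b_1}$ vertically) bounds $0\le w\le b_2-b_1$, while $X_{u_{b_2}}-X_{u_{b_1}}\to0$ at the $+\infty$ edge $l_3$, so the $l_3$-term vanishes on an exhaustion. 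Hence $\mathcal F_{\widetilde h_2}(b_2)\le\mathcal F_{\widetilde h_2}(b_1)$, and equality would force $\nabla u_{b_1}=\nabla u_{b_2}$, hence $w\equiv\mathrm{const}$, impossible since $w$ jumps from $0$ on $l_1$ to $b_2-b_1$ on $l_2$; so $\mathcal F_{\widetilde h_2}$ is strictly decreasing.

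For the \emph{second bullet} (flux over $l_1$, where $u_b\equiv0$) the weighted identity would only control the infinite side $l_2$, so instead I would argue locally with the Hopf lemma, which avoids the non-compact edges. For $b_1<b_2$ the function $w=u_{b_2}-u_{b_1}$ is a nonnegative solution of a linear elliptic equation, positive in the interior and vanishing on $l_1$; by the strong maximum principle its inner normal derivative is strictly positive at interior points of $l_1$, and since $w\equiv0$ along $l_1$ its tangential derivative vanishes there. Thus $Gu_{b_2}-Gu_{b_1}=\nabla w$ is a strictly positive multiple of the inner conormal $-J\widetilde h_1$ on $l_1$, and monotonicity of $p\mapsto p/\sqrt{1+|p|^2}$ gives $\langle X_{u_{b_2}}-X_{u_{b_1}},-J\widetilde h_1\rangle>0$ pointwise; integrating over the (finite) geodesic $\widetilde h_1$ yields $\mathcal F_{\widetilde h_1}(b_2)>\mathcal F_{\widetilde h_1}(b_1)$, so $\mathcal F_{\widetilde h_1}$ is strictly increasing.

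For the signs I would first record that, parameterizing $\widetilde h_i$ by arc length and decomposing $\xi$ in the frame $\{\widetilde h_i',-J\widetilde h_i,N\}$, the integrand in Proposition~\ref{prop:Flux} is $\langle-J\widetilde h_i,\xi\rangle=\pm\sqrt{1-\nu^2}$; it is continuous and vanishes exactly at the interior points where $\nu=1$, whose count is given by Lemma~\ref{lemma-nu}. I would also use that $\mathcal F$ is invariant under the vertical translations of $\E(4H^2-1,H)$ (it depends only on $Gu$, unchanged by adding a constant to $u$), and that the flux over a geodesic carrying value $+\infty$ (resp. $-\infty$) with inner conormal equals $-|c|$ (resp. $+|c|$), because there $X_u\to\mp(-J\widetilde h)$. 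For the first bullet, letting $b\to+\infty$ (resp. $-\infty$) and passing to the limiting Jenkins–Serrin graph with value $+\infty$ (resp. $-\infty$) over $l_2$ gives $\mathcal F_{\widetilde h_2}(b)\to-a_2<0$ (resp. $\to+a_2>0$); by Lemma~\ref{lemma-nu}(3) there is no interior $\nu=1$ point on $\widetilde h_2$ for any $b\le0$, so $\mathcal F_{\widetilde h_2}$ is continuous and nowhere zero on $(-\infty,0]$, hence positive (it is $+a_2>0$ at $-\infty$), giving $\mathcal F_{\widetilde h_2}(0)>0$; strict decrease to $-a_2$ gives $\mathcal F_{\widetilde h_2}(b)<0$ for large $b$. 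For the second bullet I apply the same limits to $v=u_b-b$ (same flux, value $-b$ over $l_1$), obtaining $\mathcal F_{\widetilde h_1}(b)\to+a_1>0$ as $b\to+\infty$ and $\to-a_1<0$ as $b\to-\infty$; the latter gives $\mathcal F_{\widetilde h_1}(-b)<0$ for large $b$, and for $H=\tfrac12$ the former gives $\mathcal F_{\widetilde h_1}(b)>0$ for large $b$. For $0<H<\tfrac12$ the specific value $b^1_{\mathcal I}(\infty,\varphi)$ is handled by Lemma~\ref{lemma-nu}(4): for $b>b^1_{\mathcal I}(\infty,\varphi)$ there is no interior $\nu=1$ point on $\widetilde h_1$ (the candidate points escape to the ideal end of $l_2$), so $\mathcal F_{\widetilde h_1}$ is nonzero on $[b^1_{\mathcal I}(\infty,\varphi),\infty)$ and, being $+a_1>0$ at $+\infty$, is positive there, whence $\mathcal F_{\widetilde h_1}(b^1_{\mathcal I}(\infty,\varphi))>0$.

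The main obstacles are all located at the non-compact boundary. First, the vanishing of the $l_3$-term in the weighted identity must be justified along the infinite edge, via an exhaustion of $U$ together with the decay of $X_{u_{b_2}}-X_{u_{b_1}}$ near the $+\infty$ boundary and the uniform bound $0\le w\le b_2-b_1$. Second, the limit values $\pm a_i$ rest on the existence of, and the $C^1$-convergence to, the limiting Jenkins–Serrin graphs carrying $\pm\infty$ data; this is precisely where the hypothesis $(a_i,\varphi)\in\Omega_i$, i.e. $a_i<a_{\max}(\varphi)$ from~\eqref{eq:Omega}, is indispensable. The only remaining delicate point is the sign at the threshold $b=b^1_{\mathcal I}(\infty,\varphi)$, which the argument above resolves through the escape of the $\nu=1$ points to infinity.
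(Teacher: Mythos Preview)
Your Hopf-lemma argument for the monotonicity of $\mathcal F_{\widetilde h_1}$ is exactly the paper's approach, and the paper applies the \emph{same} idea to $\mathcal F_{\widetilde h_2}$: translate each $\widetilde\Sigma_\varphi(\infty,a_2,b_k)$ vertically by $-b_k$ so that both graphs take the value $0$ on $l_2$, use Proposition~\ref{prop:G-M} to order them, and compare inward conormals along the common curve $\widetilde h_2$. Your weighted divergence identity, by contrast, runs into a real obstacle at the non-compact edge: when $a_1=\infty$ (the case at hand) the side $l_3=\overline{p_1p_2}$ has one ideal endpoint and hence \emph{infinite} length in $\h^2(4H^2-1)$, so your exhaustion curves near $l_3$ do not have bounded length and the decay $X_{u_{b_2}}-X_{u_{b_1}}\to 0$ alone does not force the $l_3$-contribution to vanish. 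The translate-and-Hopf route avoids this entirely because it is purely local along the finite segment $l_2$.

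Your sign analysis is where the approaches really diverge, and where there is a genuine gap. You pin down the signs by passing to the ``limiting Jenkins--Serrin graph'' with value $\pm\infty$ on the finite side, but you do not establish that such a limit exists; in fact, as $b\to+\infty$ one would need a graph with $+\infty$ on the two \emph{adjacent} sides $l_2$ and $l_3$, which typically violates the Jenkins--Serrin conditions. The paper sidesteps this completely by comparing with explicit barriers whose flux is known a priori. For the positivity ($\mathcal F_{\widetilde h_2}(0)>0$ and $\mathcal F_{\widetilde h_1}(b^1_{\mathcal I}(\infty,\varphi))>0$) it places the ruled surface $\mathcal I$ with axis in $\widetilde h_i$ below $\widetilde\Sigma_\varphi$; since $\mathcal I$ is invariant under axial reflection its conormal along the axis is horizontal, and the boundary maximum principle forces $\langle -J\widetilde h_i,\xi\rangle>0$. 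For the negativity at large $|b|$ it builds an \emph{axially symmetric} Jenkins--Serrin graph $\widetilde\Sigma_0$ over an auxiliary isosceles triangle with base $l_i$ (a helicoid piece $\mathcal H_{\infty,a_2}$ or $\mathcal H_{a_1,\infty}$ when $H=\tfrac12$), whose flux along $l_i$ vanishes by symmetry; the hypothesis $a_i<a_{\max}(\varphi)$ is used precisely to guarantee that this barrier sits correctly relative to $\widetilde\Delta$, and then a conormal comparison along the common edge gives $\mathcal F_{\widetilde h_i}<0$. Your $\nu=1$ counting via Lemma~\ref{lemma-nu} is an attractive idea, but as written it still needs an anchor value of $\mathcal F$ to fix the sign, and that anchor is exactly what the barrier comparison supplies without any appeal to $b\to\pm\infty$.
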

\begin{proof}

Let us prove that the function $\mathcal F_{\widetilde h_2}$ (resp. $\mathcal F_{\widetilde h_1}$) is strictly decreasing (resp. increasing). Set $b_1<b_2$ and translate $\widetilde \Sigma_k^2:=\widetilde \Sigma_\varphi(\infty,a_2,b_k)$ (resp. $\widetilde \Sigma_k^1:=\widetilde \Sigma_\varphi(a_1,\infty,b_k)$), $k=1,2$, until the graphs take the value  $-b_k$ ( resp. $b_k$) over $l_1$ (resp. $l_2$) and $0$ over  $l_2$ (resp. $l_1$), then we have, by Proposition~\ref{prop:G-M}, that $\widetilde \Sigma_1^2$ (resp. $\widetilde \Sigma_1^1$ ) is above (resp. below) $\widetilde \Sigma_2^2$ (resp. $\widetilde \Sigma_2^1$ ) after this translation. We parameterize  $\widetilde h_2^k:[0,a_2]\to \E(4H^2-1,H) $ (resp. $\widetilde h_1^k:[0,a_1]\to \E(4H^2-1,H) $) by unit speed with $\widetilde h_2^k(0)\in \widetilde v_0^k$ (resp. $\widetilde h_1^k(0)\in \widetilde v_1^k$) and $\widetilde h_2^k(a_2)\in \widetilde v_2^k$ (resp. $\widetilde h_1^k(a_1)\in \widetilde v_0^k$). The maximum principle in the common  boundary allows us to compare the vertical part of the inward conormal vectors $-J(\widetilde{h}_1^k)'$ (resp. $-J(\widetilde{h}_2^k)'$) obtaining 
$\langle -J (\widetilde{h}_2^1)',\xi\rangle> \langle -J(\widetilde{h}_2^2)',\xi\rangle.$ (resp. $\langle -J (\widetilde{h}_1^1)',\xi\rangle< \langle -J(\widetilde{h}_1^2)',\xi\rangle$).
Consequently, $\mathcal F_{\widetilde h_1}$ (resp.  $\mathcal F_{\widetilde h_2}$)  is strictly decreasing (resp. increasing).

Now, we will see that $\mathcal F_{\widetilde h_2}(0)>0$ for $0<H\leq\tfrac 1 2$, and $\mathcal F_{\widetilde h_1}(b^1_{\mathcal I}(\infty,\varphi))>0$ for $0<H<\tfrac 1 2$ and $\mathcal F_{\widetilde h_1}(b)>0$ for $b>0$ large enough and $H=\tfrac 1 2$. The surface $\widetilde \Sigma_\varphi(\infty, a_2, b)$  converges to $\widetilde \Sigma_\varphi(\infty,a_2, 0)$ as $b\to 0$ by continuity. The continuity is a consequence of the unicity in Lemma~\ref{lem:existence}. We have that $\widetilde \Sigma_\varphi(\infty,a_2, 0)$ is above the surface $\mathcal I$ with axis containing $\widetilde h_2$ since the graph of $\mathcal I$ with axis in $\widetilde h_2$ takes negatives values in $\widetilde \Delta(\infty,a_2,\varphi)$ . Again, by the maximum principle at the boundary, we have that  $\langle -J\widetilde h_2,\xi\rangle> 0$ when $b=0$. On the other hand, we have that, for $H<\tfrac 1 2 $, the surface $\widetilde \Sigma_\varphi(a_1, \infty, b^1_\mathcal I(\infty,\varphi))$ is above the surface $\mathcal I$ with axis  containing $\widetilde h_1$ since $\widetilde \Sigma_\varphi(a_1, \infty, b^1_\mathcal I(\infty,\varphi))$ is above  $z=b^1_\mathcal I(\infty,\varphi)$, the maximum height of the graph of $\mathcal I$ with axis $\widetilde h_1$ in $\widetilde \Delta(a_1,\infty,\varphi)$,  then a similar argument shows that $\mathcal F_{\widetilde h_1}(b^1_{\mathcal I}(\infty,\varphi))>0$.  For $H=\tfrac 1 2 $, the surface $\Sigma_\varphi(a_1, \infty,b)$ is above the surface $\mathcal I$ in a neighborhood of $\widetilde h_1$ (and consequently $\mathcal F_{\widetilde h_1}(b)>0$) for $b>0$ large enough.

We will prove now that $\mathcal F_{\widetilde h_2}(b)$ (resp. $F_{\widetilde h_1}(-b)$) is negative for $b>0$ large enough.
Assume first that $0<H<\frac 12$ and $a_1=\infty$ (resp. $a_2=\infty)$. We start by considering the isosceles triangle $\widetilde\Delta_0^2$ (resp. $\widetilde \Delta_0^1$) with base $l_2$ (resp. $l_1$) and an ideal vertex $p_3^2$ (resp. $p_3^1$). As $a_2<a_{\mathrm{max}}(\varphi)$ (resp. $a_1<a_{\mathrm{max}}(\varphi)$),  the side $\overline{p_1p_3^2}$ (resp. $\overline{p_2p_3^1}$) intersects the side $l_1$ (resp. $l_2$). Let $\widetilde\Sigma_0^2(b)$ (resp. $\widetilde\Sigma_0^1(0)$) be the unique minimal graph over $\widetilde\Delta_0^2$ (resp. $\widetilde \Delta_0^1)$ solution to the Jenkins-Serrin problem with values $+\infty$ along $\overline{p_2 p_3}$ (resp. $+\infty$ along $\overline{p_1 p_3}$), $-\infty$ along the line $\overline{p_0 p_3^2}$ (resp. $\overline{p_0 p_3^1}$ ) and $b$ along the segment $l_2$ (resp. $0$ along the segment $l_1$), see for instance~\cite[Lemma~3.2]{CMR}. We know by~\cite[Lemma 4]{CM}, that $\widetilde{\Sigma}_0^2(b)$ (resp. $\widetilde{\Sigma}_0^1(0)$) has finite radial limit at $p_0$ along $l_1=\pi(\widetilde h_1)$ (resp. $l_2=\pi(\widetilde h_2)$) so, if $b$ is large enough,  $\widetilde{\Sigma}_0^2(b)$ (resp. $\widetilde{\Sigma}_0^1(0)$) is above $\Sigma_\varphi(\infty, a_2,b)$ (resp. $\Sigma_\varphi(a_1, \infty,-b)$) in the boundary of the common domain where both are graphs. This also happens in the interior by Proposition~\ref{prop:G-M}. We compare the conormals along the curve $\widetilde h_2$ (resp. $\widetilde h_1$) which is common to both surfaces obtaining by the boundary maximum principle that $\langle -J\widetilde h_2, \xi\rangle < \langle - J\widetilde h^0_2, \xi\rangle$ (resp. $\langle -J\widetilde h_1, \xi\rangle < \langle - J\widetilde h^0_1, \xi\rangle$). Then, as  $\widetilde\Sigma_0^2(b)$ (resp. $\widetilde\Sigma_0^1(0)$) is  axially symmetric, we get that   $\mathcal{F}_{\widetilde h_2}(b) < \mathcal{F}_{\widetilde h_2^0}=0$ (resp. $\mathcal{F}_{\widetilde h_1}(-b) < \mathcal{F}_{\widetilde h_1^0}=0$) for large $b>0$.	
Assume now that $H=\tfrac 12$ and $a_1=\infty$ (resp. $a_2=\infty$). We define the strip $\widetilde\Delta_0^2\subset \R^2$ (resp. $\widetilde\Delta_0^1\subset \R^2$ ) whose sides are formed by the edge $l_1$ (resp. $l_2$) and two parallel rays $l^0_2$ (resp. $l^0_1$) and $l^0_3$ starting at $p_2$ (resp. $p_1$) and $p_0$ respectively. These rays are perpendicular to the edge $l_2$ (resp. $l_1$) such that the interior of $\widetilde\Delta_0^2$ (resp. $\widetilde\Delta_0^1$) contains a part of the ray $l_2\subset \widetilde \Delta(\infty,a_2,\varphi)$ (resp. $l_1\subset \widetilde \Delta(a_1,\infty,\varphi)$). Let $\widetilde\Sigma_0^2(b)$ (resp. $\widetilde\Sigma_0^1(0)$) be twice the fundamental piece of the helicoid $\mathcal H_{\infty,a_2}$ (resp. $\mathcal H_{a_1,\infty}$)  which is a solution to the Jenkins-Serrin problem with boundary values $-\infty$ over $l^0_3$, $+\infty$ over $l^0_2$ (resp. $l^0_1$) and $b$ over $l_2$ (resp. $0$ over $l_1$). Observe that $\widetilde\Sigma_0^2(b)$ (resp. $\widetilde\Sigma_0^1(0)$) is axially symmetric and therefore $\mathcal F_{\widetilde h_2^0}=0$ (resp. $\mathcal F_{\widetilde h_1^0}=0$). Using similar arguments to the case $0<H<\tfrac 1 2$, we can compare the surfaces  $\widetilde\Sigma_0^2(b)$ (resp. $\widetilde\Sigma_0^1(0)$) with $\widetilde\Sigma_\varphi(\infty, a_2,b)$ (resp. $\widetilde\Sigma_\varphi(a_1, \infty,-b)$)  along the curve  $\widetilde h_2$ (resp. $\widetilde h_1$) for $b>0$ large enough, obtaining the statement.
\end{proof}

\subsection{The conjugate sister surface and the period problems}
We will now describe the conjugate sister surface of the fundamental piece $\widetilde \Sigma_\varphi(a_1,a_2,b)$ and the two period problems that arise in the construction of the $(H,k)$-noids and $(H,k)$-nodoids with genus one. 

The conjugate sister surface $\Sigma_\varphi(a_1,a_2,b)\subset \mathbb{H}^2\times\mathbb{R}$ is a multi-graph  over a (possibly non-embedded) domain $\Delta\subset \h^2$. The boundary of this surface is composed by the conjugate curves of  $\partial \widetilde\Sigma_\varphi (a_1, a_2, b)$. These curves are:

\begin{itemize}
	\item  The symmetry curves $h_1$ and $h_2$ contained in  vertical planes of symmetry and the ideal vertical half-line $h_3$ contained in $\partial_\infty \h^2\times\R$.

	\item The symmetry curves $v_0$, $v_1$ and $v_2$ are contained in horizontal planes of symmetry. Observe that if $0<H<\tfrac 1 2$ and $a_1=\infty$ (resp $a_2=\infty$) then $v_1$ (resp. $v_2$) is an ideal curve of constant curvature $2H$ in $\h^2\times\{+\infty\}$ (resp. $\h^2\times\{-\infty\}$), whose normal points to the exterior (resp. interior) of $\Delta$, see Figures~\ref{FIG-Conj-Knoids} and~\ref{FIG-Conj-Knodoids} and Lemmas~\ref{lemm:vertical} and~\ref{lem:geodesic-asym}.
\end{itemize}

	For $0<H<\tfrac 1 2 $, if $a_1=\infty$ (resp. $a_2=\infty$)  we have that the curve $\pi(h_1)$ (resp. $\pi(h_2)$) is compact since $\pi(h_1)$ (resp. $\pi(h_2)$) is a curve joining an interior point of $\h^2$ with an interior point of the curve of constant curvature  $\pi(v_1)$ (resp. $\pi(v_2))$, see Lemma~\ref{lem:geodesic-asym}. Moreover as the area of $\pi(\widetilde\Sigma_\varphi(\infty,a_2,b))$ (resp. $\pi(\widetilde\Sigma_\varphi(a_1,\infty,b))$) is finite, the sides $l_1$ (resp. $l_2$) and $l_3$ are asymptotic. Therefore, the area of $\pi(\Sigma_\varphi(\infty,a_2,b))$ (resp. $\pi(\Sigma_\varphi(a_1,\infty,b))$)  is also finite since this area is preserved by conjugation (see~\cite[Section 3.2.1]{CMR}). 
For $H=\tfrac 1 2 $, if $a_1=\infty$ (resp. $a_2=\infty$), we get that the curve $\pi(h_1)$ (resp. $\pi(h_2))$ diverges in $\h^2$, since the area of $\pi(\Sigma_\varphi(\infty,a_2,b))$ (resp. $\pi(\Sigma_\varphi(a_1,\infty,b))$ is infinite).

\begin{figure}[htb]
	\begin{center}
		\includegraphics[height=9cm]{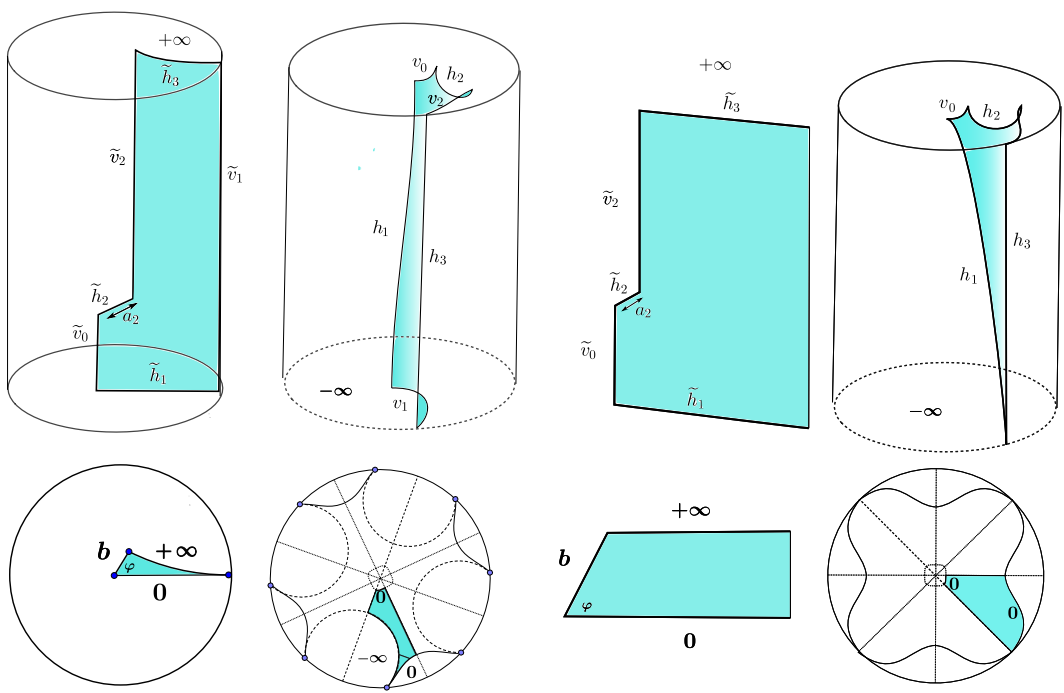}
	\end{center}
	\caption{The surface $\widetilde \Sigma_\varphi(\infty,a_2,b)$ and its conjugate sister surface $\Sigma_\varphi(\infty,a_2,b)$ with $b>0$ solving both period problems for $0<H<\tfrac 12$ (left) and $H=\frac 1 2$ (right).}
	\label{FIG-Conj-Knoids}
\end{figure}  

\begin{figure}[htb]
	\begin{center}
		\includegraphics[height=9cm]{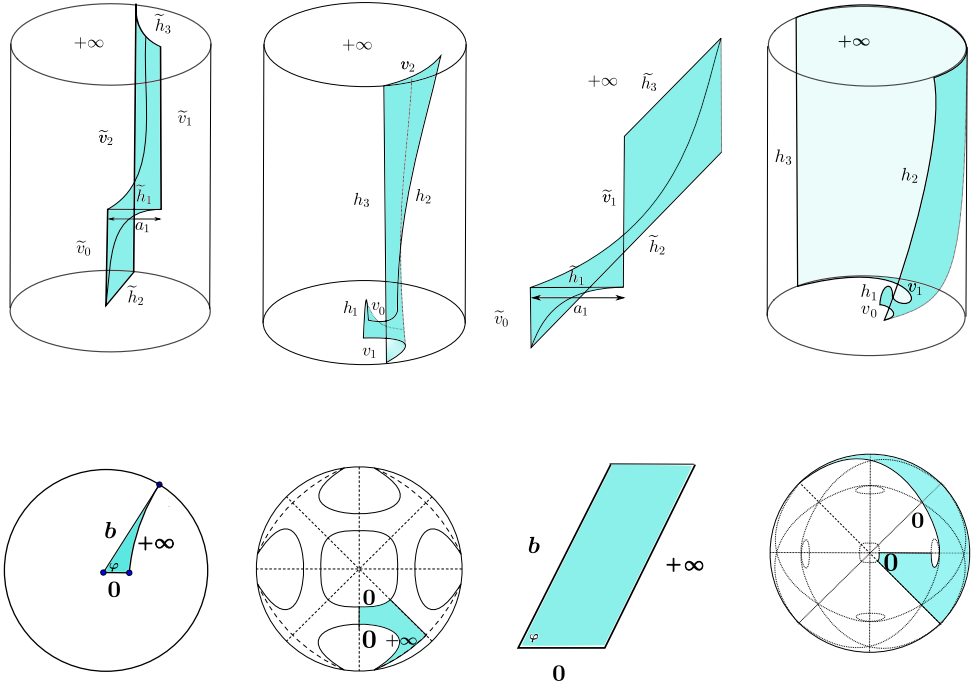}
	\end{center}
	\caption{The surface $\widetilde \Sigma_\varphi(a_1,\infty,b)$ and its conjugate sister surface $\Sigma_\varphi(a_1,\infty,b)$ with $b<0$  solving both periods problems. for $0<H<\tfrac 12$ and $H=\frac 1 2$.}
	\label{FIG-Conj-Knodoids}
\end{figure}

In the sequel we will assume that $a_1=\infty$ (resp. $a_2=\infty$). Our aim is to obtain a complete $H$-surface in $\h^2\times\R$ with genus $1$  after successive reflections over the vertical and the horizontal planes of symmetry. This is equivalent to the condition that the curves $v_0$ and $v_2$ (resp. $v_1$) lie in the same horizontal plane of $\h^2\times\R$ (first period problem) and the vertical planes of symmetry containing the curves $h_1$ and $h_2$ intersect each other with an angle $\frac{\pi}{k}$ (second period problem).
\begin{itemize}
	\item \emph{The first period function.} Assume that $a_1=\infty$ and $b>0$ (resp. $a_2=\infty$ and $b<b_\mathcal I^1(\infty,\varphi))$, as in~\cite{CM,Plehnert}, we define the first period function $\mathcal P_1^2:\Omega_2\times\R^+\to \R$ (resp. $\mathcal P_1^1:\Omega_1\times(-\infty,b_\mathcal I^1(\infty,\varphi))\to \R$) as the difference of heights between the horizontal planes containing $v_0$ and $v_2$ (resp. $v_1$), or the difference of heights of the endpoints of $h_2$ (resp. $h_1$). Parameterizing $h_2:[0,a_2]\to \h^2\times \R $ (resp. $h_1:[0,a_1]\to \h^2\times \R $) by unit speed with $h_2(0)\in v_0$ and $h_2(a_2)\in v_2$ (resp. $h_1(0)\in v_1$ and $h_1(a_1)\in v_0$), we can express the period function as:

	\begin{equation}\label{eq:fist-period}
		\mathcal P_1^i (a_i,b,\varphi)=\int_{h_i}\langle h_i',\xi\rangle=\int_{\widetilde h_i}\langle -J\widetilde h_i',\xi\rangle=F_{\widetilde h_i}(b), \;\;\; i = 1,2.
	\end{equation}

	\item \emph{The second period function.}
	\begin{itemize}
		\item Assume $a_1=\infty$ and $b>0$. 
	 We consider the half-space model for $\h^2\times\R$ and translate and rotate $\Sigma_\varphi(\infty,a_2,b)$ so that $h_1$ lies in the vertical plane $\{x=0\}$ and $v_0$ lies in the horizontal plane $\{z=0\}$. We will call $\gamma\times\R$ the vertical plane containing the symmetry curve $h_2$, where $\gamma$ is the complete extension of the geodesic $\pi(h_2)$. We identify $\h^2\times\{0\}$ with $\h^2$ and parameterize  $v_0:[0,b]\to \h^2$ by arc-length as $v_0(s)=(x(s),y(s))$ with $v_0 (0)=(0,1)$ and $v_0'(0)=-E_1$. Then we get that $x(s)<0$ for $s$ near $0$ and we denote $(x_0,y_0)=(x(b),y(b))$. Let $\psi$ be the angle of rotation with respect to the horocycle foliation (see~\eqref{eq:psi}) with initial angle $\psi(0)=\pi$ and define $\psi_0=\psi(b)$, see Figure~\ref{FIG-2-PER}. The second period function $\mathcal P_2^2:\Omega_2\times\R^+\to\R$ is defined as in \cite{CM,Plehnert}
	\begin{equation}\label{eq:second-period}
		\mathcal P_2^2(a_2,\varphi,b)=\frac{x_0\sin(\psi_0)}{y_0}-\cos(\psi_0).
	\end{equation}

\item	Assume that $a_2=\infty$ and $b<b^1_\mathcal I(\infty,\varphi)$. Aiming at defining the second period function analogously to the case $a_1=\infty$ and keeping the same orientation, we apply a reflection over the horizontal geodesic $\widetilde h_2$ to the surface $\widetilde\Sigma_\varphi(a_1,\infty,b)$ or equivalently we reflect $\Sigma_\varphi(a_1,\infty,b)$ over the vertical plane containing $h_2$. We call these surfaces $\widetilde\Sigma^-_\varphi(a_1,\infty,b)$ and $\Sigma^-_\varphi(a_1,\infty,b)$  (the conjugate $H$-surface).
Again in the half-space model of $\h^2\times\R$, we translate and rotate the surface $\Sigma_\varphi^-(a_1,\infty,b)$ so that $h_2^-$ lies in the vertical plane $\{x=0\}$ and $v_0^-$ lies in the horizontal plane $\{z=0\}$. We will call $\gamma\times\R$ the vertical plane containing the symmetry curve $h_1^-$, where $\gamma$ is the complete extension of the geodesic $\pi(h_1^-)$. We identify $\h^2\times\{0\}$ with $\h^2$ and we parameterize  $v_0^-:[0,|b|]\to \h^2$ by arc-length as $v_0^-(s)=(x(s),y(s))$ with $v_0^-(0)=(0,1)$ and $(v_0^-)'(0)=-E_1$. Then we get that $x(s)<0$ for $s$ near $0$ and we denote $(x_0,y_0)=(x(b),y(b))$. 
	This orientation coincides with the orientation that came from the election $(\widetilde v_0^-)'=\xi$ for $0<b<b^1_\mathcal I(\infty,\varphi)$ since $\theta'_0>0$ and with the contrary orientation when $b<0$ since  $\theta_0'<0$ (see~Lemma~\ref{lemma-nu}). We choose this orientation for $v_0^-$ in order to work with both cases at once and for similarity to the construction with $a_1=\infty$. Let $\psi$ be the angle of rotation of $v_0^-$ with respect to the horocycle foliation (see~\eqref{eq:psi}) with initial angle $\psi(0)=\pi$ and define $\psi_0=\psi(|b|)$, see Figure~\ref{FIG-2-PER}. The second period function $\mathcal P_2^1:\Omega_1\times(-\infty, b_\mathcal I^1(\infty,\varphi))\to\R$ is defined as in Equation~\eqref{eq:second-period}:
	\begin{equation}\label{eq:second-period-2}
		\mathcal P_2^1(a_1,\varphi,b)=\frac{x_0\sin(\psi_0)}{y_0}-\cos(\psi_0).
	\end{equation}
\end{itemize}

\begin{figure}[htb]
	\begin{center}
		\includegraphics[height=4.5cm]{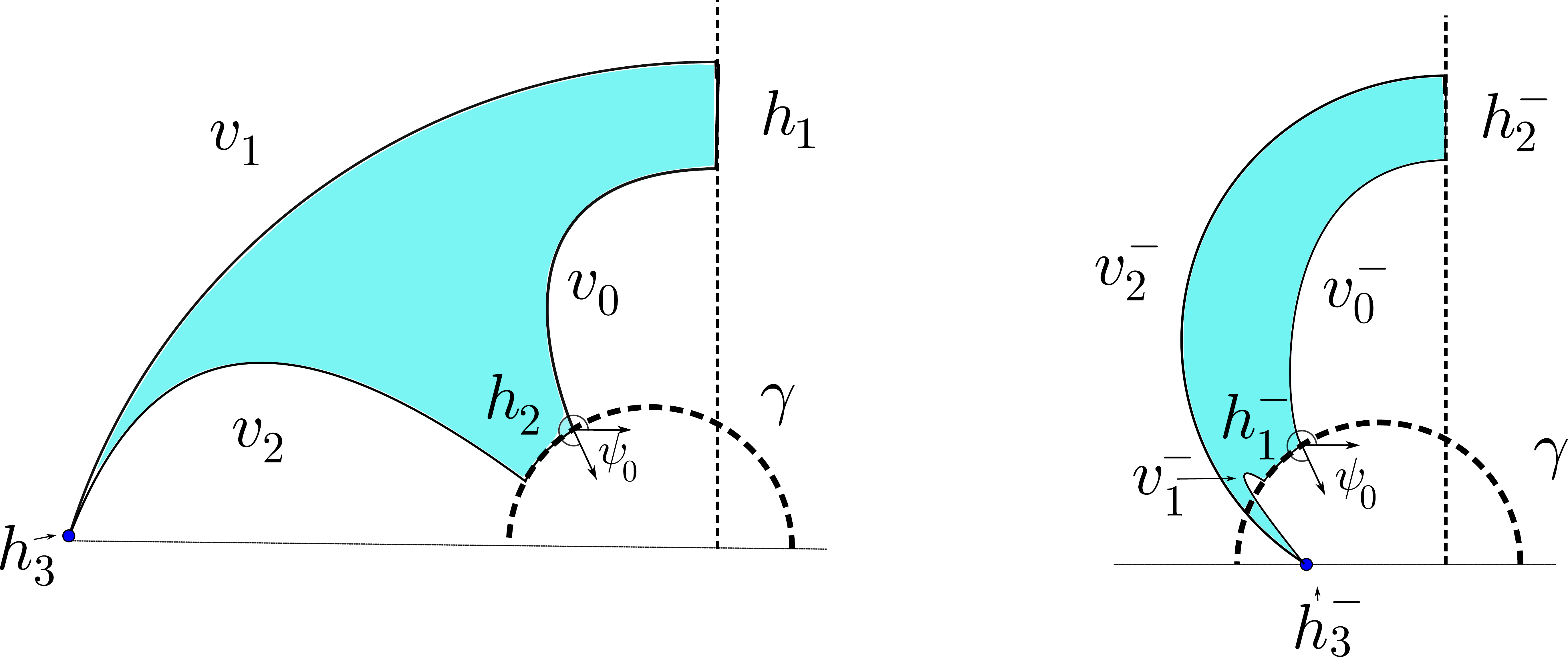}
	\end{center}
	\caption{The projection of the surfaces $\Sigma_\varphi(\infty,a_2,b)$ and $\Sigma_\varphi^-(a_1,\infty,b)$ with $0<H<\frac 1 2$ under the assumptions of the second period problem.}
	\label{FIG-2-PER}
\end{figure}  

\end{itemize}
	We will see in~Lemmas~\ref{lemma-P2} and~\ref{lemma-P2-2} that, under the assumption $0<\mathcal P_2<1$, the vertical planes containing $h_1$ and $h_2$ intersect each other with an angle $\arccos (\mathcal P_2)$.
We aim at proving that there exist parameters $(a_2,\varphi,b)$ (resp. $(a_1,\varphi,b)$) such that {$\mathcal P_1^2(a_2,\varphi,b)=0$} (resp. {$\mathcal P_1^1(a_1,\varphi,b)=0$}) and $\mathcal P_2^2(a_2,\varphi,b)=\cos(\frac{\pi}{k})$ (resp. $\mathcal P_2^1(a_1,\varphi,b)=\cos(\frac{\pi}{k})$), solving the two period problems.

\begin{lemma}\label{lemma:P1}
	If $0<H\leq\tfrac 12$ and $a_1=\infty$ (resp. $a_2=\infty$), there exists an unique function $f_2:\Omega_2\to\R^+$ (resp. $f_1:\Omega_1\to (-\infty, b_\mathcal I^1(\infty,\varphi))$) such that $\mathcal P_1^2(a_2,\varphi,f_2(a_2,\varphi))=0$ (resp. $\mathcal P_1^1(a_1,\varphi,f_1(a_1,\varphi))=0$),  for every pair $(a_2,\varphi)\in \Omega_2$ (resp. $(a_1,\varphi)\in \Omega_1$).
	Moreover:
	\begin{itemize}
		\item If $0<H<\tfrac 1 2$, then $f_2$ (resp. $f_1$) is a continuous function and 
	\begin{align*}
		&\lim_{a_2\to 0}f_2(a_2,\varphi)=0\ \ \ \ \ \ \ \ \ \  \text{(resp. }\lim_{a_1\to 0}f_1(a_1,\varphi)=0^-),\\
		&\lim_{a_2\to a_{\mathrm{max}}}f_2(a_2,\varphi)=+\infty\ \ \ \text{(resp. }\lim_{a_1\to a_{\mathrm{max}}}f_1(a_1,\varphi)=-\infty),
	\end{align*}
and given $\varphi\in(0,\frac{\pi}{2})$, $f_1(\cdot,\varphi):(0,a_{\mathrm{max}}(\varphi))\to(0,+\infty)$ (resp. $f_2(\cdot,\varphi):(0,a_{\mathrm{max}}(\varphi))\to(0,-\infty)$) is a strictly increasing (resp. decreasing) function for $a_2\geq a_{\mathrm{emb}}(\varphi)$ (resp. $a_1\geq a_{\mathrm{emb}}(\varphi)$).
	\item If $H=\tfrac 1 2$, then the function $f_2$ (resp. $f_1$) is a continuous function and satisfies 
	\begin{align*}
		&\lim_{a_2\to 0}f_2(a_2,\varphi)=0\ \ \ \ \ \ \ \ \ \text{(resp. }\lim_{a_1\to 0}f_1(a_1,\varphi)=0^-),\\
		&\lim_{a_2\to \infty}f_2(a_2,\varphi)=+\infty \ \ \ \  \text{(resp. }\lim_{a_1\to\infty}f_1(a_1,\varphi)=-\infty),\\
		&\lim_{\varphi\to \frac{\pi}{2}}f_2(a_2,\varphi)=+\infty \ \ \ \ \  \text{(resp. }\ \lim_{\varphi\to \frac{\pi}{2}}f_1(a_1,\varphi)=-\infty).
	\end{align*}
\end{itemize}
\end{lemma}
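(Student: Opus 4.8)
The plan is to recognize the first period function as the flux already analyzed in Lemma~\ref{Lemma:Flujo} and to read off existence, uniqueness and the range of $f_i$ directly from the monotonicity and sign information collected there. By Equation~\eqref{eq:fist-period} we have $\mathcal P_1^2(a_2,\varphi,b)=\mathcal F_{\widetilde h_2}(b)$ and $\mathcal P_1^1(a_1,\varphi,b)=\mathcal F_{\widetilde h_1}(b)$. First I would fix $(a_2,\varphi)\in\Omega_2$ (resp. $(a_1,\varphi)\in\Omega_1$) and invoke Lemma~\ref{Lemma:Flujo}: the map $b\mapsto\mathcal F_{\widetilde h_2}(b)$ is strictly decreasing with $\mathcal F_{\widetilde h_2}(0)>0$ and $\mathcal F_{\widetilde h_2}(b)<0$ for large $b>0$ (resp. $b\mapsto\mathcal F_{\widetilde h_1}(b)$ is strictly increasing, positive at $b^1_{\mathcal I}(\infty,\varphi)$ when $0<H<\tfrac12$ and at large $b$ when $H=\tfrac12$, and negative at large negative $b$). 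The intermediate value theorem together with strict monotonicity yields a unique zero, which defines $f_2(a_2,\varphi)$ (resp. $f_1(a_1,\varphi)$); the signs of the endpoint values place this zero in $\R^+$ (resp. in $(-\infty,b^1_{\mathcal I}(\infty,\varphi))$), giving the asserted target spaces.

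For continuity I would first argue that $\mathcal F_{\widetilde h_i}$ depends continuously on the full triple $(a_i,\varphi,b)$. This follows from continuous dependence of the Jenkins-Serrin solutions $\widetilde\Sigma_\varphi(a_1,a_2,b)$ on their parameters: a standard compactness argument (interior estimates and normal families) shows that any sequence of solutions with converging data subconverges to a minimal graph with the limiting data, which by the uniqueness in Lemma~\ref{lem:existence} must be the corresponding solution, and the convergence is $C^1$ up to the finite-data arcs $\widetilde h_i$, so the conormal, hence the flux, converges. Continuity of the implicitly defined root $f_i$ is then the usual consequence of joint continuity of $\mathcal F_{\widetilde h_i}$ together with strict monotonicity in $b$.

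The remaining assertions are the boundary limits, and these are where the genuine work lies. For the limits $\lim_{a_i\to0}f_i=0$ I would run a degeneration argument: as the finite side shrinks to the vertex $p_0$, the period-normalized surfaces subconverge by the compactness above, and the limiting boundary data, now prescribing the value $b$ on a side collapsed to the point where the value is $0$, force $b\to0$; the lower barrier $\widetilde\Sigma'$ of Lemma~\ref{lem:existence} and the comparison with the surface $\mathcal I$ (whose height over the short side tends to $0$ by Equation~\eqref{eq:superficie I}) are used to exclude a divergent subsequence. For the blow-up limits ($a_i\to a_{\mathrm{max}}$ when $0<H<\tfrac12$, and $a_i\to\infty$ or $\varphi\to\tfrac{\pi}{2}$ when $H=\tfrac12$) I would re-use the comparison surfaces of Lemma~\ref{Lemma:Flujo}, namely the isosceles ideal-vertex graph $\widetilde\Sigma_0^2(b)$ (resp. the helicoidal piece $\widetilde\Sigma_0^1$), and track how the threshold in $b$ beyond which $\mathcal F_{\widetilde h_i}$ turns negative degenerates. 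The condition $a_i<a_{\mathrm{max}}(\varphi)$ is precisely what keeps the auxiliary side $\overline{p_1p_3^2}$ (resp. $\overline{p_2p_3^1}$) meeting $l_1$ (resp. $l_2$); as $a_i\to a_{\mathrm{max}}$ this intersection is lost, the barrier no longer forces negativity at bounded $b$, and the zero $f_i$ is pushed to $\pm\infty$.

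Finally, for the eventual monotonicity of $a_i\mapsto f_i(a_i,\varphi)$ on $a_i\ge a_{\mathrm{emb}}(\varphi)$ I would compare, for $a_i<a_i'$, the two flux functions by nesting the triangles and applying the Generalized Maximum Principle of Proposition~\ref{prop:G-M}: ordering the two graphs that share the value $b=f_i(a_i,\varphi)$ and comparing their conormals along $\widetilde h_i$ should give $\mathcal F_{\widetilde h_i}^{a_i'}(f_i(a_i,\varphi))$ of the appropriate sign, whence strict monotonicity of the zero in the direction dictated by the boundary limits. I expect this monotonicity step to be the main obstacle: the sign of the conormal comparison is only controlled once the opposite-vertex angle is at most $\tfrac{\pi}{2}$, that is, for $a_i\ge a_{\mathrm{emb}}(\varphi)=\arcsinh(\cot\varphi)$, so the whole argument hinges on exploiting this embeddedness threshold to pin down the orientation of the comparison, which is the delicate point that must be handled with care.
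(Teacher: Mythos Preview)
Your existence, uniqueness and continuity arguments coincide with the paper's. Two of the remaining steps, however, are handled differently in the paper, and in one of them your plan has a genuine gap.

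\textbf{The limit $a_i\to 0$.} Your ``collapsing side forces $b\to0$'' sketch is not rigorous: as $a_i\to0$ the domain degenerates and there is no limiting Jenkins--Serrin problem to compare with. The paper instead argues by contradiction and \emph{rescales}: if $f_i(a_i^n,\varphi)\to b_\infty\neq0$, translate so the finite side sits at height $0$, blow up so that side has length $1$, and pass to a limit minimal graph in $\R^3$ over a Euclidean strip with data $0$ on the short side and $\pm\infty$ on the parallel rays. That limit is then compared with a Euclidean helicoid with axis on the short side (which has zero flux by axial symmetry), yielding the contradiction. Your barrier-with-$\mathcal I$ idea does not replace this rescaling.

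\textbf{Monotonicity for $a_i\ge a_{\mathrm{emb}}(\varphi)$.} Here your approach will not close. You propose to nest the two triangles, set $b=f_i(a_i,\varphi)$, order the graphs by Proposition~\ref{prop:G-M}, and compare conormals along $\widetilde h_i$. But $\widetilde h_i$ is precisely the side whose length changes, so the two flux integrals are over different arcs; a pointwise conormal inequality on the overlap does not give the sign of the full flux for the longer triangle. The paper avoids this by comparing conormals along the \emph{common infinite} side $\widetilde h_1$ (resp. $\widetilde h_2$), obtaining $\mathcal F(\widetilde\Sigma_1,\widetilde h_1\setminus D_{\mathcal H})>\mathcal F(\widetilde\Sigma_2,\widetilde h_1\setminus D_{\mathcal H})$, and then uses the zero-total-flux identity around $\partial\widetilde\Delta$ (truncated by a small horocycle $\mathcal H$ at the ideal vertex) together with $\mathcal F(\widetilde\Sigma_i,\widetilde h_3^i\setminus D_{\mathcal H})=-|\pi(\widetilde h_3^i\setminus D_{\mathcal H})|$ to convert this into an inequality between the \emph{lengths} of the truncated sides $l_3$. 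The role of $a_{\mathrm{emb}}(\varphi)$ is then purely geometric: it is the threshold beyond which the angle at the finite vertex $p_2$ (resp. $p_1$) is at most $\tfrac\pi2$, so that the truncated length $|\pi(\widetilde h_3\setminus D_{\mathcal H})|$ is strictly monotone in $a_i$ with a positive gap $c$ independent of $\mathcal H$, contradicting the inequality. Your intuition that $a_{\mathrm{emb}}$ ``pins down the orientation of the comparison'' is correct in spirit, but the mechanism is this length-of-$l_3$ argument via the divergence identity, not a conormal comparison on the varying side.
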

\begin{proof}
	First of all, observe that, as in~\cite[Lemma 3]{CM}, the function $\mathcal P_1^2$ (resp. $\mathcal P_1^1$) is  continuous. This function is strictly decreasing (resp. increasing) in the third parameter by Lemma~\ref{Lemma:Flujo}. Observe that, when $b=0$, the vertical part of the inward conormal is not continuous since the vertical segment disappears. However, the function $\mathcal P_1$ is continuous.
	
	Fix $(a_2,\varphi)\in \Omega_2$ (resp. $(a_1,\varphi)\in \Omega_1$). By Lemma~\ref{Lemma:Flujo} and the continuity and monotonicity of $\mathcal P_1^2$ (resp. $\mathcal P_1^1$) in the third parameter, there exists a unique $b_0^2\in(0,+\infty)$ (resp. $b_0^1\in (-\infty,b^1_{\mathcal{I}}(\infty,\varphi)))$ such that $\mathcal P_1^2(a_2,\varphi,b_0^2)=0$ (resp. $\mathcal P_1^1(a_1,\varphi,b_0^1)=0$). Therefore, we define univocally $f_2(a_2,\varphi):=b_0^2$ (resp. $f(a_1,\varphi):=b_0^1$). Moreover, the continuity of $f_2$ (resp. $f_1$) is guaranteed by its uniqueness, see also~\cite[Lemma 5]{CM}.

	 The computations of the limits of $f_i(\cdot,\varphi)$ are based on~\cite[Lemma~5]{CM}. 
	 
		We consider the case $0<H<\tfrac 1 2$. Assume by contradiction that there exists a subsequence $a_2^n\to 0$ (resp. $a_1^n\to 0$) such that $f_2(a_2^n,\varphi)$ (resp. $f_1(a_1^n,\varphi))$ converges to some $b_\infty^2\in (0,+\infty)$ (resp. $b_\infty^1\in (-\infty,0)$). Translate vertically $\widetilde\Sigma_\varphi(\infty,a_2^n,f_2(a_2^n,\varphi))$ (resp. $\widetilde\Sigma_\varphi(a_1^n,\infty,f_1(a_1^n,\varphi))$) until they take the value $-f_2(a_2^n,\varphi)$ (resp. $f_1(a_1^n,\varphi)$) over $l_1$ (resp. $l_2$) and the value $0$ over $l_2$ (resp. $l_1$). Since $a_i^n\to0$ for $i=1,2$, we can blow up the surface and the metric of $\E(4H^2-1,H)$ such that we get $a_2^n=1$ (resp $a_1^n=1$). The new sequence of rescaled surfaces converges in the $\mathcal C^k$-topology to a minimal surface $\widetilde\Sigma_\infty$ in $\R^3$. This minimal surface is a vertical graph over a strip $\widetilde\Delta (\infty,1,\varphi)\subset\R^2$ (resp. $\widetilde\Delta (1,\infty,\varphi)\subset\R^2$) bounded by the two parallel lines $l_1'$ (resp. $l_2'$) and $l_3'$ (resp. $l_3'$) and a segment  $l_2'$ (resp. $l_1'$) of length $1$ forming an angle $\varphi$ with $l_1'$ (resp. $l_2'$). Moreover, $\widetilde\Sigma_\infty$ takes the value $0$ over $l_2'$ (resp. $l_1'$), $-\infty$ over $l_1'$ (resp. $l_2'$) and $+\infty$ over $l_3'$ (resp. $l_3'$) since $b_\infty^2>0 $ (resp. $b_\infty^1<0)$. However, $\widetilde\Sigma_\infty$ cannot have first period function equal to zero since  $\widetilde\Sigma_\infty$ lies below (resp. above) a helicoid $\widetilde\Sigma_0^2$ with axis in $l_2'$ (resp. a helicoid $\widetilde\Sigma_0^1$ with axis $l_1'$), which is a graph over a half-strip and the helicoid $\widetilde\Sigma_0^2$ (resp. $\widetilde\Sigma_0^1$) has period $0$ because it is axially symmetric, see also~\cite[Figure 4]{CM}. 
		
		Let us see now that the case  $a_2 = \infty$ and $b_\infty\in (0,b_\mathcal I^1 (\infty,\varphi))$ gets into contradiction. Again we can blow up the surfaces and the metric of $\E(4H^2-1,H)$  such that we get $a_1^n=1$. The new sequence of rescaled surfaces converges in the $\mathcal C^k$-topology to a minimal surface $\widetilde\Sigma_\infty$ in $\R^3$. This minimal surface $\widetilde\Sigma_\infty$ is a vertical graph over the  strip $\widetilde \Delta(1,\infty,\varphi)\subset\R^2$ and  takes the value $-\infty$ over $l_1'$ (since $b_\infty $ is not zero), $0$ over $l_2'$ and $+\infty$ over $l_3'$. Again is easy to check that $\widetilde\Sigma_\infty$ cannot have the first period function equal to zero by applying the maximum principle.
		
		We see now that $f_1(a_1^n,\varphi)$ converges to $0$ from below when $a^n_1 \to 0$. Assume by contradiction there exists a subsequence $a_1^{\sigma(n)}$ such that $b_n:=f_1(a_1^{\sigma(n)},\varphi)>0$. Blowing up the surfaces and the metric as in the previous arguments we have that the new sequence of rescaled surfaces converges in the $\mathcal C^k$-topology to a minimal surface $\widetilde\Sigma_\infty$ in $\R^3$. This minimal surface is a vertical graph over the   strip $\widetilde\Delta(1,\infty,\varphi)\subset\R^2$  and takes the value $b_\infty=\lim_{n\to\infty}\frac{b_n}{a_n}>0$ over $l_1'$, $0$ over $l_2'$ and $+\infty$ over $l_3'$. Then, as $b_\infty>0$ the limit surface is above the plane $\{z=0\}$ and consequently the first period cannot be $0$.		
		
		Assume by contradiction that there exists a subsequence $a_2^n\to a_{\mathrm{max}}$ (resp. $a_1^n\to a_{\mathrm{max}}$)  such that $f_2(a_2^n, \varphi)\to b^2_\infty\in [0,+\infty)$ (resp.  $f_1(a_1^n, \varphi)\to b^1_\infty\in (-\infty,b_\mathcal I^1(\infty,\varphi)]$). We translate vertically the axially symmetric surface $ \widetilde\Sigma_0^2=\widetilde\Sigma^2_0(b)$ (resp. $\widetilde\Sigma_0^1= \widetilde\Sigma_0^1(0))$ (mentioned  in Lemma~\ref{Lemma:Flujo}) until it takes the value $b^2_\infty$ (resp. $b^1_\infty$) over the edge $l_2$ (resp. $l_1$). We get that the surface $\widetilde\Sigma^2_0$ (resp. $\widetilde\Sigma^1_0$) is below (resp. above) the limit surface $\widetilde\Sigma_\varphi(\infty, a_{\mathrm{max}},b^2_\infty)$ (resp. $\widetilde\Sigma_\varphi(a_{\mathrm{max}},\infty,b^1_\infty)$) and therefore  the period function $\mathcal P_1^2(a_{\mathrm{max}},\varphi,b^2_\infty)$ (resp. $\mathcal P_1^1(a_{\mathrm{max}},\varphi,b^1_\infty)$) is not zero, which is a contradiction.

		Let $0<\varphi<\frac{\pi}{2}$ and assume  by contradiction that the function $f_2(\cdot,\varphi)$ (resp. $f_1(\cdot,\varphi)$) is not strictly increasing (resp. decreasing)   for $a>a_{\mathrm{emb}}$.  Then, in both cases, there exist two numbers $\rho_1,\rho_2\in \R$ such that $a_{\mathrm{emb}}\leq \rho_1<\rho_2$ and $f_2(\rho_1,\varphi)=f_2(\rho_2,\varphi)=b_0\in(0,+\infty)$ (resp. $f_2(\rho_1,\varphi)=f_2(\rho_2,\varphi)=b_0\in(-\infty,b_\mathcal I^1(\infty,\varphi))$). Let  $\widetilde\Sigma_i=\widetilde\Sigma_\varphi(\infty,\rho_i,b)$ (resp. $\widetilde\Sigma_i=\widetilde\Sigma_\varphi(\rho_i,\infty,b)$). In this setting, the horizontal geodesic of finite length in $\widetilde\Sigma_i$ is denoted with a superindex, $\widetilde h_2^i$ (resp. $\widetilde h_1^i$), to indicate that $|\widetilde h_2^i| = \rho_i$ (resp. $|\widetilde h_2^i| = \rho_i$) for $i=1,2$. Then, both surfaces solve the first period problem and we have
	\begin{align*}
	&	\mathcal F(\widetilde\Sigma_1,\widetilde h_2^i)=\mathcal F(\widetilde\Sigma_2,\widetilde h_2^i)=0 \;\; \text{for}\ \ i = 1,2,\\
	\text{(resp. } &\mathcal F(\widetilde\Sigma_1,\widetilde h_1^i)=\mathcal F(\widetilde\Sigma_2,\widetilde h_1^i)=0 \;\; \text{for}\ \ i = 1,2.)
	\end{align*}
	
	Observe that $\widetilde h_1^1= \widetilde h_1^2$ (resp. $\widetilde h_2^1= \widetilde h_2^2$) and  $\widetilde\Sigma_1$ is above  $\widetilde\Sigma_2$  by Proposition~\ref{prop:G-M}. We choose a small horocycle $\mathcal H$ in the vertex $p_1$ (resp. $p_2$) and consider $D_\mathcal H$ its inner domain with boundary $\mathcal H$. By the maximum principle in the boundary, we can compare the vertical part of the inward conormals obtaining that
	\begin{align}\label{eq:Flux-h_3}
		& \mathcal F(\widetilde\Sigma_1,\widetilde h_1^1\backslash D_\mathcal H)>\mathcal F(\widetilde\Sigma_2,\widetilde h_1^2\backslash D_\mathcal H) \\
		\text{(resp. }&\mathcal F(\widetilde\Sigma_1,\widetilde h_2^1\backslash D_\mathcal H)>\mathcal F(\widetilde\Sigma_2,\widetilde h_2^2\backslash D_\mathcal H)\text{)}.\notag
	\end{align}
	On the other hand, traveling in the boundary with the same orientation --- the same that we chose for defining the flux over $\widetilde h_2^i$ (resp. $\widetilde h_1^i$) --- we have that 
	\begin{equation}\label{eq:sum-Flux}
		\sum_{j=1}^3\mathcal F(\widetilde\Sigma_i,\widetilde h_j^i\backslash D_\mathcal H)+\mathcal F(\widetilde\Sigma_i,\mathcal H)=0,
	\end{equation}
	where $\mathcal F(\widetilde\Sigma_i,\mathcal H)$ represents the flux that we add when we truncate with the horocycle $\mathcal H$. We have that for all $\epsilon>0$, we can  choose  a small horocycle $\mathcal H$ such that  $|\mathcal F(\widetilde\Sigma_i,\mathcal H)|<\frac\epsilon 2$. Moreover, we know by \ref{prop:Flux} $\mathcal F(\widetilde\Sigma_i,\widetilde h_3^i\backslash D_\mathcal H)=-|\pi(\widetilde h_3^i\backslash D_\mathcal H)|$  since $-J\widetilde h_3^i=-\xi$ for $i = 1,2$. Then, combining \eqref{eq:Flux-h_3} and \eqref{eq:sum-Flux}, we obtain (in both cases)
	\begin{align*}
	&|\pi(\widetilde h_3^2\backslash D_\mathcal H )|-|\pi(\widetilde h_3^1\backslash D_\mathcal H)| <|\mathcal F(\widetilde\Sigma_2,\mathcal H)- F(\widetilde\Sigma_1,\mathcal H)|<\epsilon.
	\end{align*}
	Furthermore, we have that  $|\pi(\widetilde h_3^2\backslash D_\mathcal H )|-|\pi(\widetilde h_3^1\backslash D_\mathcal H )|=c>0$ for any choice of $\mathcal H$ since $a_{\mathrm{emb}}\leq \rho_1<\rho_2$. Then choosing $\epsilon<c$, we achieve a contradiction.

	We consider now the case $H=\tfrac 1 2$. The limits of $f_i(a_i,\varphi)$ when $a_i\to 0$ and  $a_i\to\infty$ can be computed by similar arguments to those in the case $0<H<\tfrac 1 2$. Finally, we compute the limit when $\varphi\to\frac{\pi}{2}$. Assume by contradiction that there exists a subsequence $\varphi^n\to \frac{\pi}{2}$ such that $f_2(a_2,\varphi^n)\to b^2_\infty\in[0,+\infty)$ (resp. $f_1(a_1,\varphi^n)\to b^1_\infty\in\R$). The limit surface $\widetilde \Sigma_{\frac{\pi}{2}}(\infty, a_2,b_\infty)$ projects onto the  strip $\widetilde \Delta(\infty,a_2,\frac{\pi}{2})\subset\mathbb R^2$ (resp.  $\widetilde \Delta(a_1,\infty,\frac{\pi}{2})\subset\R^2$) and  it is a solution to the Jenkins-Serrin problem with boundary values $0$ over $l_1$, $b^2_\infty$ (resp. $b^1_\infty$) over $l_2$ and $+\infty$ over $l_3$. We may compare $\widetilde \Sigma_{\frac{\pi}{2}}(\infty, a_2,b^2_\infty)$ (resp.  $\widetilde \Sigma_{\frac{\pi}{2}}(a_1,\infty, b^1_\infty)$) with twice the fundamental piece of the helicoid $\mathcal H_{\infty,a_2}$ (resp. $\mathcal H_{a_1,\infty}$) see Section~\ref{subsec:relevant surfaces}, which is a vertical graph  (after a suitable ambient isometry) over $\widetilde \Delta(\infty,a_2,\frac{\pi}{2})$ (resp. $\widetilde \Delta(a_1,\infty,\frac{\pi}{2})$) with boundary values $b^1_\infty$ (resp. $-\infty$) over $l_2$,  $-\infty$ (resp. $0$) over $l_1$  and $+\infty$ over $l_3$. Twice the helicoid $\mathcal H_{\infty,a_2}$ (resp. $\mathcal H_{a_1,\infty}$)  has first-period function equals to $0$ along $l_2$ (resp. $l_1$) and it is below $\widetilde \Sigma_{\frac{\pi}{2}}(\infty, a_2,b^2_\infty)$ (resp. $\widetilde \Sigma_{\frac{\pi}{2}}(a_1,\infty, b^1_\infty)$) by looking at the boundary values and using the maximum principle. Then we get into a contradiction by the maximum principle because $\widetilde \Sigma_{\frac{\pi}{2}}(\infty, a_2,b^2_\infty)$ (resp. $\widetilde \Sigma_{\frac{\pi}{2}}(a_1,\infty,b^1_\infty)$) must have first period function equal to $0$.	
\end{proof}

\subsection{Solving the second period problem for the $(H,k)$-noids $\Sigma_\varphi(\infty,a_2,b)$}

\begin{lemma}\label{lemma-P2}
	Set $a_1=\infty$ and $(a_2,\varphi)\in\Omega_2$. In the notation of the second period problem, the following statements hold true:
	\begin{enumerate}
		\item $x(s)<0$ and $\pi<\psi(s)<2\pi$ for all $s\in (0,b)$.
		\item The curve $v_0$ intersects only once the geodesic $\gamma$.
		\item  If $\gamma$ intersects the $y$-axis with angle $\delta$, then $\varphi>\delta+2Hb$ and $\mathcal{P}_2^2(a_2,\varphi,b)=\cos(\delta)$.
		\item If $\mathcal P_2^2(a,\varphi,b)=\cos(\delta)$, then $\gamma$ intersects the $y$-axis with an angle $\delta$ and $\tfrac{y_0}{\sin(\psi_0)}>-\tfrac{1}{\sin(\delta)}$.
	\end{enumerate}
	
	Moreover:
	\begin{itemize}
		\item If $0<H<\tfrac 12$, $\lim_{a_2\to 0}\mathcal P_2^2(a_2,\varphi,f_2(a_2,\varphi))=\cos(\varphi)$ and  $\mathcal P_2^2(a_2,\varphi,f_2(a_2,\varphi))>1$ for $a_2$ close enough to $a_{\mathrm{max}}(\varphi)$.
		\item If $H=\tfrac 12$, $\lim_{a_2\to 0}\mathcal P_2^2(a_2,\varphi,f_2(a_2,\varphi))=\cos(\varphi)$ and  $\mathcal P_2^2(a_2,\varphi,f_2(a_2,\varphi))>1$ for $\varphi$ close enough to $\frac{\pi}{2}$.
	\end{itemize}

\end{lemma}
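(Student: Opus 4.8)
The plan is to use that Daniel's correspondence is isometric and preserves the angles between boundary curves. At the corner $\widetilde h_2\cap\widetilde v_0$ the horizontal and vertical geodesics meet orthogonally, so $h_2\perp v_0$ in $\Sigma$. Since $v_0$ is contained in the horizontal plane $\{z=0\}$ and $h_2=(\beta,z)$ with $\beta=\pi(h_2)\subset\gamma$ (Lemma~\ref{lemma:horizontal}), orthogonality in the product metric forces $\langle v_0'(b),\gamma'\rangle_{\h^2}=0$, i.e.\ $v_0$ meets $\gamma$ perpendicularly at its endpoint. This fact, together with the ODE \eqref{eq:psi}, namely $\psi'+\cos(\psi)=\theta'-2H$, and the sign $\theta_0'>0$ coming from Lemma~\ref{lemma-nu}(2) (with the convention fixed in Remark~\ref{Remark:signo}), is the engine driving every part of the statement.

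For \textbf{part (1)} I would analyze the planar system for $(x(s),y(s),\psi(s))$. At $s=0$ one has $\psi'(0)=\theta'(0)-2H+1>0$ because $\theta_0'>0$ and $2H\le 1$, so $\psi$ enters $(\pi,2\pi)$; a phase-plane/barrier argument for \eqref{eq:psi}, using that $\theta'>0$ throughout $\widetilde v_0$ (the normal rotates monotonically), keeps $\psi$ in $(\pi,2\pi)$ on $(0,b)$. Since then $y'=y\sin(\psi)<0$ the curve descends, and combined with $x'(0)=-1<0$ one deduces $x(s)<0$ on $(0,b)$. For \textbf{part (2)}, the perpendicular intersection at $s=b$ together with the controlled sign of the geodesic curvature $k_g=2H-\theta'$ of $v_0$ relative to the geodesic $\gamma$ yields a convexity comparison that rules out a second intersection.

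For \textbf{parts (3)--(4)} the computation is explicit. Writing $v_0'(b)=\cos(\psi_0)E_1+\sin(\psi_0)E_2$, the geodesic $\gamma$ through $(x_0,y_0)$ orthogonal to it is the half-circle centered at $c=x_0-y_0\cot(\psi_0)$ of radius $R=-y_0/\sin(\psi_0)$ (here $\sin(\psi_0)<0$ by part (1)). A direct calculation gives $-c/R=\mathcal P_2^2$, so $\gamma$ meets the $y$-axis exactly when $|\mathcal P_2^2|<1$, and with the chosen orientation the crossing angle satisfies $\cos(\delta)=\mathcal P_2^2$, proving \eqref{eq:second-period}$=\cos(\delta)$. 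The crossing height is $y_\ast=R\sin(\delta)$, so the inequality in (4), $\tfrac{y_0}{\sin(\psi_0)}>-\tfrac1{\sin(\delta)}$, is precisely $y_\ast<1$ (the crossing sits below the basepoint $(0,1)$), which follows from the descent in part (1). Finally $\varphi>\delta+2Hb$ comes from integrating \eqref{eq:psi}: $(\psi_0-\pi)+\int_0^b\cos(\psi)\,ds=\Delta\theta-2Hb$, identifying $\Delta\theta=\varphi$ as the total turning of the horizontal normal between the corners (the angle between $l_1$ and $l_2$), and combining this with the expression for $\delta$.

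For the \textbf{limits}, as $a_2\to0$ Lemma~\ref{lemma:P1} gives $b=f_2\to0$, hence $x_0\to0$, $y_0\to1$, and the integrated relation $\varphi=(\psi_0-\pi)+\int_0^b\cos(\psi)\,ds+2Hb$ forces $\psi_0\to\pi+\varphi$; substituting into \eqref{eq:second-period} yields $\mathcal P_2^2\to-\cos(\pi+\varphi)=\cos(\varphi)$. As $a_2\to a_{\mathrm{max}}(\varphi)$ for $0<H<\tfrac12$, or $\varphi\to\tfrac\pi2$ for $H=\tfrac12$, Lemma~\ref{lemma:P1} gives $b\to+\infty$, and comparing the fundamental piece with the axially symmetric barriers of Lemma~\ref{Lemma:Flujo} (the helicoids, respectively the surface $\mathcal I$) should show that $v_0$ turns enough that $\gamma$ lies entirely in $\{x<0\}$, i.e.\ $c<-R$, i.e.\ $\mathcal P_2^2=-c/R>1$. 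I expect the \emph{main obstacle} to be twofold: the global control of $\psi$ in part (1), preventing $\psi$ from reaching $2\pi$, and above all the divergent regime $b\to\infty$, where establishing $\mathcal P_2^2>1$ needs a genuine barrier comparison and cannot be read off from the ODE relation alone.
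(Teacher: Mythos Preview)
Your explicit computations in parts (3)--(4) and the limit $a_2\to 0$ are essentially those of the paper, but there are genuine gaps elsewhere.

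\textbf{Parts (1), (2), and the inequality $\varphi>\delta+2Hb$ in (3).} Your ODE argument handles the lower bound $\psi>\pi$ (if $\psi(s_0)=\pi$ with $\psi'(s_0)\le 0$ then $\theta'(s_0)\le 2H-1\le 0$, contradicting $\theta_0'>0$), and this is in effect what the paper does too. But the upper bound $\psi<2\pi$ cannot be obtained from the ODE using only that $\theta_0'>0$ pointwise: at a first instant $\psi(s_0)=2\pi$ you only get $\theta'(s_0)\ge 2H+1$, which is not forbidden. The correct input is the \emph{integral} bound $\int_0^b\theta_0'=\varphi<\tfrac{\pi}{2}$, and the paper encodes this through Gauss--Bonnet on the planar domain bounded by an arc of $v_0$ and a vertical segment. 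The same Gauss--Bonnet argument is what yields $x(s)<0$ (your claim ``$y'<0$ and $x'(0)<0$'' is not enough: once $\psi\in(\tfrac{3\pi}{2},2\pi)$ you have $x'=y\cos\psi>0$), the uniqueness of the intersection $v_0\cap\gamma$ in (2), and the inequality $\varphi>\delta+2Hb$ in (3) (applied to the triangle bounded by $v_0$, an arc of $\gamma$, and a segment of the $y$-axis). Your proposed ``convexity comparison'' and ``combining with the expression for $\delta$'' do not supply these; the area term in Gauss--Bonnet is what produces the strict inequality.

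\textbf{Part (4).} Your claim that $y_\ast<1$ ``follows from the descent in part (1)'' is not correct: $y_\ast$ is the height where the geodesic $\gamma$ meets the $y$-axis, not a value of $v_0$, and a half-circle through $(x_0,y_0)$ with $y_0<1$ can perfectly well cross the $y$-axis above $1$. The paper deduces $y_\ast<1$ from part (2): since $v_0$ joins $(0,1)$ to $(x_0,y_0)\in\gamma$ inside $\{x\le 0\}$ and meets $\gamma$ only once, the crossing $(0,y_\ast)$ must sit below $(0,1)$.

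\textbf{The divergent limit.} The paper does \emph{not} argue via barriers on the initial side. It shows that as $a_2\to a_{\max}(\varphi)$ (resp.\ $\varphi\to\tfrac{\pi}{2}$ for $H=\tfrac12$), the surfaces $\widetilde\Sigma_\varphi(\infty,a_2,f_2)$ converge, after a vertical translation, to twice the fundamental piece of an embedded $H$-catenoid (resp.\ to twice the helicoid $\mathcal H_{\infty,a_2}$), and then analyzes the \emph{conjugate} limit directly: in the normalization of the second period problem (where $v_0^n(0)=(0,1)$ and $(v_0^n)'(0)=-E_1$) the conjugate pieces converge to a subset of the vertical $H$-cylinder $\alpha\times\R$ through $(0,1)$ (resp.\ the horocylinder $\{y=1\}$). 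From this one reads off that $\gamma^n$ eventually fails to meet the $y$-axis, and since $\sin(\psi_0^n)<0$ by (1), equation \eqref{eq:gamma0} forces $\mathcal P_2^2>1$. This limiting-geometry argument is the missing idea in your proposal.
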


\begin{proof}
	We will identify $v_0$ with its projection in $\h^2$ in what follows.
	\begin{enumerate}
		\item 	As $v_0$ and $\pi(h_1)$ are orthogonal, $x(s) < 0$ in an interval $(0,\epsilon)$. Assume by contradiction that $x(s)<0$ is not true for all $s\in(0,b)$, then let $s_0$ be the first instant where $x(s_0)=0$. Let $U$ be the domain enclosed by the   arc of  $v_0$ joining $v_0(0)$ with  $v_0(s_0)$ and a segment in the $y$-axis joining $v_0(0)$ and $v_0(s_0)$. Let $\alpha$ be the non-oriented angle between $v_0$ and the $y$-axis at $v_0(s_0)$. Applying Gauss-Bonnet formula to the domain $U$ and taking into account that $\theta_0'>0$, we get the following contradiction
		\begin{align}
			0>-\text{area}(U)&=2\pi+\int_{0}^{s_0}k_g(s)ds-(\pi-\tfrac
			\pi 2+\pi-\alpha)\nonumber
			\\ &=\frac
			\pi 2+\alpha -\int_{0}^{s_0}\theta_0'(s)ds+2Hs_0
			>\tfrac{\pi}{2}-\varphi>0.\label{eq:G-B}
		\end{align}

		As $\theta_0'>0$, we know by Lemma~\ref{lemm:vertical} that the normal along $v_0$ points the interior of $\Delta$ and $k_g<2H$ with respect to the interior of $\Delta$. We have that $v_0$ stays locally in the concave side of the tangent curve of constant curvature $2H$ at $v_0(0)$. If $\psi(s)>\pi$ were not true for all $s\in(0,b)$, consider the first instant $s_0$ in which $\psi(s_0)=\pi$. At this point we have that $v_0$ has points locally around $v_0(s_0)$ in the mean convex side of the tangent curve of constant curvature $2H$ at $v_2(s_0)$, which contradicts the fact that $k_g<2H$.
		
		Assume again by contradiction that there is a first instant $s_0>0$ where $\psi(s_0)=2\pi$ and consider the domain $U$ enclosed by an arc of $v_0$ and a parallel segment to the $y$-axis at $v_0(s_0)$. Applying Gauss-Bonnet formula in $U$, we get the same contradiction as in Equation~\eqref{eq:G-B}.
		
		\item Assume once again by contradiction that $v_0$ intersects $\gamma$ twice, one in $v_0(s_0)$ and the other at $v_0(s_1)$ for some $0<s_0<s_1<b$. Then, the arc of $v_0$ joint with an arc of the curve $\gamma$ enclose a compact domain $U$. Applying Gauss-Bonnet formula to the domain $U$ we get the same contradiction as in Equation~\eqref{eq:G-B} because we have a similar region to the one given before. 
		
		\item As $\pi<\psi_0<2\pi$, we can parameterize the geodesic $\gamma$ as
		\begin{equation}\label{eqn:gamma}
			\gamma:(0,\pi)\to \mathbb{H}^2,\quad \gamma(t)=\left(x_0-y_0\dfrac{\cos(t)+\cos(\psi_0)}{\sin(\psi_0)}, -y_0 \dfrac{\sin(t)}{\sin(\psi_0)}\right).
		\end{equation}  
		
		\noindent As $\gamma$ intersects  the $y$-axis, the first coordinate of  $\gamma(0)$ is positive. Let $s_*$ be the instant where $\gamma$ intersects the $y$-axis, then  we can compute the oriented angle as
		\begin{equation}\label{eq:period2-angle}
			\cos(\delta)=\frac{\langle \gamma'(s_*), y\partial_y  \rangle }{|\gamma'(s_*)|}=\frac{x_0\sin(\psi_0)}{y_0}-\cos(\psi_0)=\mathcal P_2^2(a_2,\varphi,b).
		\end{equation}
		
		Consider now the domain $U$ enclosed by $v_0$, an arc of $\gamma$ and a segment of the $y$-axis. Applying Gauss-Bonnet formula, we have that
		\begin{align}
			0>-\text{area}(U)&=2\pi+\int_{0}^{b}k_g(s)ds-(\pi-\frac
			\pi 2+ \pi-\frac
			\pi 2+\pi-\delta)\nonumber
			\\ &=
			-\int_{0}^{b}\theta_0'(s)ds+2Hb+\delta
			=-\varphi+2Hb+\delta.\label{eq:G-B-2}
		\end{align}
		
		\item From \eqref{eq:second-period} and \eqref{eqn:gamma}, we get:
		\begin{align}\label{eq:gamma0}
			\gamma(\pi)=\left(y_0\frac{1+\mathcal P_2^2(a_2,\varphi,b)}{\sin(\psi_0)},0\right)\ \  \text{ and }\ \ \gamma(0)=\left( y_0\frac{\mathcal P_2^2(a_2,\varphi,b)-1}{\sin(\psi_0)},0\right)
		\end{align}
		
		\noindent whose first coordinates are negative and  positive, respectively. That means that $\gamma$ intersect the $y$-axis, whence the intersection angle is $\delta$ by~\eqref{eq:period2-angle}.	
		
		On the other hand, as $v_0$ only intersects $\gamma$ once, we deduce that the second coordinate of $\gamma(s_*)$ (the instant where $\gamma$ intersects the $y$-axis) is less than $1$. Then  we get that 
		$-y_0\tfrac{\sin(\delta)}{\sin(\psi_0)}<1,$
		and the inequality of the statement follows.
	\end{enumerate}
	
	Let us compute the limits. Integrating $\psi'=\theta'-\cos(\psi)-2H$ along $v_0$, see formula~\eqref{eq:psi}, and taking into account that $\psi(b)-\psi(0)=\psi_0- \pi$ and $\theta_0(b)-\theta_0(0)=\varphi$, we obtain
	\begin{equation}
		\psi_0=\varphi +\pi-\int_{0}^b\cos(\psi(s))ds-2H b.
	\end{equation}
	In particular,   when $b\to0$, we have that $\psi_0$ converges to $\varphi+\pi$ and consequently $\mathcal P_2^2(a_2,\varphi,b)$ converges to $\cos(\varphi)$. Thus, if $a_2^n$ is a sequence converging to $0$, then $f_2(a_2^n,\varphi)$ also converges to $0$, so that we obtain the desired limit.
	
Assume now that $0<H<\tfrac 1 2$.	Let us consider a sequence $a_2^n\to a_\mathrm{max}(\varphi)$. 
	If we translate properly vertically the surface $\widetilde\Sigma_\varphi(\infty,a_2^n,f_2(a_2^n,\varphi))$ for it takes the value $0$ along $l_2$, we have that $\widetilde\Sigma_\varphi(\infty,a_2^n,f_2(a_2^n,\varphi))$ converges to twice the fundamental piece of the conjugate surface of the embedded $H$-catenoids constructed in~\cite{CMR,Plehnert2}. Here we are using that $f_2(a_2^n,\varphi)\to+\infty$ by Lemma~\ref{lemma:P1}. However, as in the setting of the second period problem, we are translating and rotating $\Sigma_\varphi(\infty,a_2,f_2(a_2^n,\varphi))$ so that $v_0^n(0)=(0,1,0)$ and $(v_0^n)'(0)=-E_1$, that means  $\widetilde \Sigma_\varphi(\infty,a_2,f_2(a_2^n,\varphi))$ converges to a subset of a vertical plane in this setting. We obtain that the limit surface of $\Sigma_\varphi(\infty,a_2,f_2(a_2^n,\varphi))$ is not twice the fundamental piece of the $H$-catenoid but a subset of the vertical $H$-cylinder  that projects onto a curve of constant curvature $2H$  orthogonal to the $y$-axis at $(0,1)$. The $H$-cylinder can  be parameterized as $\alpha\times\R$ with $\alpha:(-\arccos(2H),\arccos(2H) )\to \h^2$ given by
	\[\alpha(s)=\frac{1}{1-2H}(\sin(s),-2H+\cos(s)).\]  We have that $x_0^n\to \frac{-1-2H}{\sqrt{1-4H^2}}<0$ and $y_0^n\to 0$. Moreover, for large $n$, the curve $\gamma^n$ does not intersect the $y$-axis since we have shown that the limit is a subset of the $H$-cylinder $\alpha\times\R$. That means that the first coordinate of $\gamma^n(0)$ is positive and $\mathcal P_2^2(a_2^n,\varphi,f_2(a_2^n,\varphi))>1$ since we have proved that $\sin(\psi_0^n)<0$, see Equation~\eqref{eq:gamma0}. 

Assume now that $H=\tfrac 12$ and consider  the sequence $\widetilde \Sigma_{\varphi_n}(\infty, a_2, f_2(a_2,\varphi^n))$ with $\varphi^n\to \frac{\pi}{2}$.  By Lemma~\ref{lemma:P1}, after a suitable translation, the limit surface is twice the fundamental piece of the helicoid $\mathcal H_{\infty,a_2}$. The conjugate limit surface  $\Sigma_{\frac \pi2}(\infty, a_2,+\infty)$ is an embedded $\frac 1 2$-catenoid constructed in~\cite{CMR,DH,Plehnert}. However, in the setting of the second period problem, that is,  $v_0^n(0) = (0,1,0)$ and $(v^n_0 )'(0) = -E_1$, the conjugate limit surface is not a $\frac 1 2$-catenoid but a subset of the horocylinder $\{y=1\}$.

Since the family $\widetilde\Sigma_\varphi(a_2, f_2(a_2,\varphi))$ is continuous in the parameters $a_2$ and $\varphi$, so is the conjugate family. We have that $v_0^n$ converges to the line $\{y=1, z=0\}$. Then we have that $x_0^n\to+\infty$ and $y_0^n\to 1$.  Therefore, as $\sin(\psi_0^n) <0$, we deduce that $\mathcal{P}^2_2(a_2,\varphi^n,f(a_2,\varphi^n))>1$ for $n$ large enough.			
\end{proof}

\begin{lemma}\label{lemma:vertical graph}
	The surface $\Sigma_\varphi(\infty,a_2,b)$ is a vertical graph. In particular, it is embedded.
\end{lemma}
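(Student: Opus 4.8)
The plan is to prove that the projection $\pi\colon\Sigma_\varphi(\infty,a_2,b)\to\h^2$ is injective: since $\Sigma_\varphi(\infty,a_2,b)$ is by construction a multigraph over a (a priori non\nobreakdash-embedded) domain $\Delta\subset\h^2$, injectivity of $\pi$ promotes it to a single\nobreakdash-valued vertical graph, which is automatically embedded. The first step is to note that the angle function is preserved by the sister correspondence: from $(A,T,\nu)=(J\widetilde A+H\cdot\mathrm{id},J\widetilde T,\widetilde\nu)$ we read off $\nu=\widetilde\nu$. By Lemma~\ref{lemma-nu}, $\widetilde\nu\geq 0$ and vanishes only along the vertical geodesics $\widetilde v_0,\widetilde v_1,\widetilde v_2$ of $\partial\widetilde\Sigma_\varphi(\infty,a_2,b)$, so $\nu>0$ on the interior of $\Sigma_\varphi(\infty,a_2,b)$. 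Hence $\mathrm d\pi$ is an isomorphism at every interior point, $\pi$ is an orientation\nobreakdash-preserving local diffeomorphism there, and the surface is locally a vertical graph; the tangent plane becomes vertical only over the boundary symmetry curves $v_0,v_1,v_2$.

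The second, and main, step is to show that the boundary projects injectively onto a simple curve bounding $\Delta$. By Lemma~\ref{lemma:horizontal} the curves $h_1$ and $h_2$ lie in vertical planes, so $\pi(h_1)$ and $\pi(h_2)$ are geodesic arcs; in the normalization used for the second period problem, $\pi(h_1)$ is a compact arc of the $y$\nobreakdash-axis $\{x=0\}$ and $\pi(h_2)$ lies on the complete geodesic $\gamma$ extending it. By Lemma~\ref{lemm:vertical}, $v_0$ and $v_2$ lie in horizontal planes, so $\pi(v_0)$ and $\pi(v_2)$ are arcs of constant geodesic curvature $2H-\theta'$, while the asymptotic pieces $\pi(v_1)$ and $\pi(h_3)$ escape to $\partial_\infty\h^2$. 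The decisive input is Lemma~\ref{lemma-P2}: parts (1) and (2) give that $\pi(v_0)$ stays in the half\nobreakdash-plane $\{x<0\}$, so it meets $\pi(h_1)\subset\{x=0\}$ only at its initial endpoint $v_0(0)=(0,1)$, and that it meets $\gamma\supset\pi(h_2)$ exactly once, at its other endpoint $(x_0,y_0)$. Together with the embeddedness of the geodesic and the (convex) constant\nobreakdash-curvature arcs and the monotonicity of $\psi$ from Lemma~\ref{lemma-P2}(1), the arcs $\pi(h_1),\pi(v_0),\pi(h_2),\pi(v_2)$ join up with no further crossings and close, with the ideal arcs, into an embedded boundary $\partial\Delta$.

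The last step is a standard topological conclusion. The fundamental piece $\widetilde\Sigma_\varphi(\infty,a_2,b)$ is a disk, hence so is its isometric sister $\Sigma_\varphi(\infty,a_2,b)$; the map $\pi$ restricts to a local diffeomorphism on the interior and, by the previous step, to a homeomorphism from $\partial\Sigma_\varphi(\infty,a_2,b)$ onto the Jordan curve $\partial\Delta$. A local diffeomorphism of a disk that is a homeomorphism on the boundary (extended continuously to the ideal ends) is a single\nobreakdash-sheeted covering of the simply connected region $\Delta$, hence a global diffeomorphism. Therefore $\pi\colon\Sigma_\varphi(\infty,a_2,b)\to\Delta$ is injective, the surface is a vertical graph over $\Delta$, and in particular embedded.

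I expect the main obstacle to be the boundary analysis of the second step: ruling out \emph{all} self\nobreakdash-intersections of the projected boundary, and in particular controlling the interaction of $\pi(v_0)$ with the geodesic $\gamma=\pi(h_2)$ and with $\pi(h_1)$, as well as checking that the ideal arcs $\pi(v_1)$ and $\pi(h_3)$ close the curve consistently. This is precisely the information packaged in Lemma~\ref{lemma-P2}, so the whole argument hinges on having that boundary description available; once the projected boundary is known to be a simple curve, the conclusion is formal.
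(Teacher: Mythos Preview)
Your overall strategy coincides with the paper's: show that the projected boundary is a simple curve in $\h^2$, then upgrade the multigraph to a graph. The third step (the topological conclusion) is fine; the paper phrases it as ``a standard application of the maximum principle'', but your covering argument works equally well.

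The gap is in your second step. You correctly invoke Lemma~\ref{lemma-P2}(1)--(2) to control $\pi(v_0)$ against the $y$-axis and against $\gamma$, but then you assert without argument that ``the arcs $\pi(h_1),\pi(v_0),\pi(h_2),\pi(v_2)$ join up with no further crossings and close, with the ideal arcs, into an embedded boundary''. This is precisely the content that needs proof, and Lemma~\ref{lemma-P2} says nothing about $\pi(v_2)$ or about the interaction of $\pi(h_2)$ with the ideal curve $\pi(v_1)$. Two points are missing. First, you need the embeddedness of $v_2$ itself: the paper obtains it from $\theta_2'>0$ and $\int_{\widetilde v_2}\theta_2'\leq\pi$ via \cite[Proposition~3.10]{CMT}; ``monotonicity of $\psi$'' alone does not give this. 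Second, and more seriously, you must rule out that $\pi(h_2)\subset\gamma$ meets $\pi(v_1)$. The paper does this by a short case analysis exploiting the multigraph property: if $\gamma$ misses $\pi(v_1)$, then $\pi(v_2)$, which has to close the boundary of a local graph, cannot hit any other arc; if instead $\gamma$ does meet $\pi(v_1)$, then $\pi(v_2)$ would be forced to join them while enclosing a multigraph domain and hence would self-intersect, contradicting the embeddedness just established. This dichotomy is the crux of the proof and is absent from your proposal.
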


\begin{proof}
	We continue working in the half-space model and the setting of the second period problem. First observe that $v_0$ and $v_2$ are embedded curves since $\theta_i'>0$ and $\int_{\widetilde v_i}\theta_i'\leq\pi$ $i=0,2$ (see~\cite[Lemma 2.1]{CMR}). In particular each curve of the boundary of $\Sigma_\varphi(\infty,a_2,b)$ is embedded.  We will  show that $\partial \Sigma_\varphi(\infty,a_2,b)$ projects one-to-one to a curve of $\h^2$, so that $\Sigma_\varphi(\infty,a_2,b)$ is a graph by a standard application of the maximum principle.

	Assume that $0<H<\tfrac 1 2$. Observe that the curves $\pi(v_1)$ and $\pi(h_1)$ do not intersect each other, since they are consecutive and $\pi(v_1)$ is a constant curve of curvature $2H$ and $\pi(h_1)$ is contained in a orthogonal geodesic to $\pi(v_1)$. Moreover, by item (1)  in Lemma~\ref{lemma-P2}, the curve $\pi(v_0)$ does not intersect $\pi(h_1)$ or $\pi(v_1)$.
	If  $\pi(h_2)\subset\gamma$ does not intersect in the interior of $\pi(v_1)$, we are done  (as  $\Sigma_\varphi(\infty,a_2,b)$ is a multigraph then $\pi(v_2)$ cannot intersect any component of $\pi(\partial\Sigma_\varphi(\infty,a_2,b))$ and then we conclude that the multigraph is a graph).
	Otherwise, if $\pi(h_2)\subset\gamma$  intersects $\pi(v_1)$, then  as $\pi(v_2)$ must join these two curves enclosing a multigraph, we  would obtain that $\pi(v_2)$ intersects itself. However, this is not possible  and we conclude that $\partial \Sigma_\varphi(\infty,a_2,b)$ projects one-to-one.
	
	The proof for $H=\frac{1 }{2}$ is similar, yet $v_1$ does not exist and the curve $\pi(h_3)$ is not compact.
\end{proof}

\begin{remark}
For $H=0$, the embeddedness of the fundamental piece $\Sigma_\varphi(\infty,a_2,b)$ is guaranteed by the Krust property, that is, the conjugate surface of a minimal graph in $\h^2\times\mathbb R$ over a convex domain  is a minimal graph in $\h^2\times\mathbb R$, see~\cite[Theorem 14]{HST}. However, for $H>0$ there is not a Krust property (see~\cite{CMR}) and the embeddedness has to be proven in order to show the global embeddedness of the $(H,k)$-noids with genus one for some values of $H$, see Proposition~\ref{Theorem:embebimiento}.
\end{remark}

\begin{theorem}\label{th:k-noides}
	For each $k\geq3$ and $\frac{\pi}{k}<\phi\leq\frac{\pi}{2}$, there exists  a properly  Alexandrov embedded $H$-surface with $0\leq H\leq \frac{1}{2}$  in $\h^2\times\R$ with genus $1$ and $k$ ends.  These $H$-surfaces have dihedral symmetry with respect to $k$ vertical planes and they are symmetric with respect to a horizontal plane. Moreover, if $0<H<\tfrac 12 $ each of their ends is embedded and asymptotic to (and contained in the concave side of) a vertical $H$-cylinder.
\end{theorem}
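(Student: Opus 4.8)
The plan is to assemble the complete surface from the fundamental piece $\Sigma_\varphi(\infty,a_2,b)$ by successive Schwarz reflections, after tuning the parameters so that both period problems close up. Fix $k\geq 3$, an angle $\varphi\in(\tfrac{\pi}{k},\tfrac{\pi}{2}]$ and $0\leq H\leq\tfrac12$ (the case $H=0$ being the construction of~\cite{CM}). First I would dispose of the first period problem: by Lemma~\ref{lemma:P1} there is a continuous function $f_2\colon\Omega_2\to\R^+$ with $\mathcal P_1^2(a_2,\varphi,f_2(a_2,\varphi))=0$, so the choice $b=f_2(a_2,\varphi)$ forces the horizontal symmetry curves $v_0$ and $v_2$ to lie in a common horizontal plane for every admissible $a_2$.

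It then remains to close the second period, i.e. to find $a_2$ with $\mathcal P_2^2(a_2,\varphi,f_2(a_2,\varphi))=\cos(\tfrac{\pi}{k})$. Here I would invoke the boundary behaviour of Lemma~\ref{lemma-P2}: the continuous map $a_2\mapsto \mathcal P_2^2(a_2,\varphi,f_2(a_2,\varphi))$ tends to $\cos(\varphi)$ as $a_2\to 0$ and exceeds $1$ near the top of the admissible range (as $a_2\to a_{\mathrm{max}}(\varphi)$ when $0<H<\tfrac12$, and, when $H=\tfrac12$, for $\varphi$ near $\tfrac{\pi}{2}$ together with continuity in both parameters). Since $\varphi>\tfrac{\pi}{k}$ gives $\cos(\varphi)<\cos(\tfrac{\pi}{k})<1$, the intermediate value theorem produces $a_2^\ast$ realizing $\mathcal P_2^2=\cos(\tfrac{\pi}{k})$. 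As recorded just before Lemma~\ref{lemma-P2}, this makes the two vertical symmetry planes containing $h_1$ and $h_2$ meet at angle $\arccos(\cos\tfrac{\pi}{k})=\tfrac{\pi}{k}$.

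With both periods closed, I would complete the surface. Write $\Sigma^\ast=\Sigma_\varphi(\infty,a_2^\ast,f_2(a_2^\ast,\varphi))$ for the fundamental piece, which is embedded by Lemma~\ref{lemma:vertical graph}. Its boundary consists of the vertical symmetry curves $h_1,h_2$, the now-coplanar horizontal symmetry curves $v_0,v_2$, the ideal curve $v_1$ of constant curvature $2H$ in $\h^2\times\{+\infty\}$, and the ideal vertical half-line $h_3$. Reflection across the two vertical planes generates the dihedral group of order $2k$ of rotations about the vertical axis where these planes meet, and reflection across the common horizontal plane doubles the picture; iterating these isometries extends $\Sigma^\ast$ to a complete $H$-surface $\Sigma$ invariant under this group, with dihedral symmetry about $k$ vertical planes and mirror symmetry about one horizontal plane. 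The $k$ rotational copies give $k$ congruent ends, each arising from the $h_1$--$v_1$--$h_3$ portion of the boundary, while the region where the $4k$ copies are glued around the axis produces a single handle, so $\Sigma$ has genus one; this is the point to carry out carefully, e.g. by an Euler-characteristic count on the glued disks exactly as in~\cite{Maz,Plehnert,CM}.

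Finally I would identify the ends and the global position. For $0<H<\tfrac12$, by Lemma~\ref{lem:geodesic-asym} the ideal vertical geodesic $\widetilde v_1$ conjugates to the ideal horizontal curve $v_1$ of curvature $2H$ at height $+\infty$, while $h_3$ conjugates to an ideal vertical geodesic; together they force each end to be asymptotic to the vertical $H$-cylinder over the complete curve of curvature $2H$ containing $v_1$, approached from the concave side since the normal along $v_1$ points to the exterior of the projected domain (Lemma~\ref{lemm:vertical}). Embeddedness of each end follows from the graph property of $\Sigma^\ast$, and since $\Sigma^\ast$ is embedded and bounds a region preserved by the reflections, $\Sigma$ is properly Alexandrov embedded. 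The main obstacle is precisely this last global step: one must verify that the reflected copies glue only along complete symmetry curves without creating spurious intersections, that the central gluing yields exactly genus one rather than a higher genus, and---in the critical case $H=\tfrac12$---that the second period can still be closed for every $\varphi\in(\tfrac{\pi}{k},\tfrac{\pi}{2}]$, even though the upper estimate in Lemma~\ref{lemma-P2} is phrased only for $\varphi$ near $\tfrac{\pi}{2}$.
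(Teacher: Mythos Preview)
Your overall strategy coincides with the paper's for $0<H<\tfrac12$: fix $\varphi=\phi$, use $f_2$ from Lemma~\ref{lemma:P1} to kill the first period, and then run the intermediate value theorem on $a_2\mapsto\mathcal P_2^2(a_2,\varphi,f_2(a_2,\varphi))$ using the endpoint behaviour of Lemma~\ref{lemma-P2}. The assembly by reflections and the description of the ends are also in line with the paper. Two points, however, are genuine gaps that the paper handles with specific arguments you are missing.

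\textbf{The critical case $H=\tfrac12$.} You flag this yourself, and it is real: for $H=\tfrac12$ Lemma~\ref{lemma-P2} does \emph{not} say that $\mathcal P_2^2>1$ for large $a_2$ at fixed $\varphi$, only for $\varphi$ close to $\tfrac\pi2$. So you cannot run the IVT in $a_2$ alone. The paper's fix is to move \emph{both} parameters simultaneously along an explicit foliation of $\Omega_2$ by segments
\[
\alpha_\phi(t)=(1-t)(0,\phi)+t\bigl(\tan(\tfrac\pi2-\phi),\tfrac\pi2\bigr),\qquad t\in[0,1],
\]
one curve for each $\phi\in(\tfrac\pi k,\tfrac\pi2)$. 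As $t\to0$ one lands at $(0,\phi)$, where Lemma~\ref{lemma-P2} gives $\mathcal P_2^2\to\cos\phi$; as $t\to1$ the angle coordinate tends to $\tfrac\pi2$, and the other half of Lemma~\ref{lemma-P2} forces $\mathcal P_2^2>1$. The IVT along $\alpha_\phi$ then produces $(a(\phi),\varphi(\phi))$ with $\mathcal P_2^2=\cos(\tfrac\pi k)$. Without this diagonal trick the argument for $H=\tfrac12$ is incomplete.

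\textbf{Embeddedness of the ends for $0<H<\tfrac12$.} Saying this ``follows from the graph property of $\Sigma^\ast$'' is not enough: an end is not $\Sigma^\ast$ but the union of $\Sigma^\ast$ with its mirror images across the vertical plane through $h_1$ and across the horizontal plane, and there is no Krust-type statement ruling out that these copies overlap. The paper supplies two ingredients. First, the maximum principle with horizontal planes coming from above shows $\Sigma^\ast\subset\h^2\times(-\infty,0]$, so the horizontal reflection causes no self-intersection. Second, and this is the key step, after reflecting the \emph{initial} minimal piece about the horizontal geodesic $\widetilde h_1$, the extended vertical segment $\widetilde v_0^{\,*}$ has total normal rotation $\int\theta_0'=2\varphi<\pi$; hence, by \cite[Proposition~3.10]{CMT}, the conjugate curve $v_0^{\,*}$ is embedded and the doubled conjugate piece is again a vertical graph in the half-space $\h^2\times(-\infty,0]$. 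Reflecting this graph across the horizontal plane then yields an embedded surface containing a full end. This angle-variation argument is what actually certifies end embeddedness, and it is absent from your sketch.
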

\begin{proof}
	The case $H=0$ is treated in~\cite{CM}. 
	Assume first that $0<H<\tfrac 12$ and take $k\geq 3$ and $\phi\in (\frac{\pi}{k}, \frac{\pi}{2})$. We choose $\varphi=\phi$ and,
	by Lemma~\ref{lemma-P2},	we have that $\mathcal P_2^2(a_2,\varphi,f_2(a_2,\varphi))$ tends to $\cos(\varphi)$ when $a_2\to0$ and becomes greater than $1$  when $a_2\to a_{\mathrm{max}}(\varphi)$. By the continuity of $\mathcal P_2^2$, there exists $a_\varphi$ such that  $\mathcal P_2^2(a_\varphi,\varphi,f_2(a_\varphi,\varphi))=\cos(\frac{\pi}{k})$. Therefore, the surface $\Sigma_\varphi:=\Sigma_\varphi(\infty, a_\varphi,f_2(a_\varphi,\varphi))$ solves the two period problems, so after successive reflections over the vertical planes and the horizontal plane of symmetry, we obtain the desired complete $H$-surface with genus $1$ and $k$ ends asymptotic to vertical $H$-cylinders from the concave side. 
	We shall see now that the ends are embedded. First, observe that, by the maximum principle with respect to horizontal planes arriving from above, $\Sigma_\varphi$ is contained in the slab $\h^2\times(-\infty,0]$ (we are assuming after a vertical translation that $v_0$ and $v_2$ lies in $\h^2\times\{0\})$. Moreover, if we reflect $\widetilde \Sigma_\varphi$ about the horizontal geodesic $\widetilde h_1$, the total variation of the angle of rotation $\theta_0$ along the complete vertical line $\widetilde v_0^*$ of the reflected surface $\widetilde\Sigma^*_\varphi$ is $2\varphi<\pi$, whence the curve $v_0^*$ (the extension of the curve $v_0$ after reflection) is embedded by \cite[Lemma 2.1]{CMR}. Therefore, the conjugate surface of the reflected surface  $\widetilde\Sigma_\varphi^*$ is a vertical graph contained in the  half-space $\h^2\times(-\infty,0)$. Then, after reflecting over the horizontal plane $\h^2\times\{0\}$, we obtain an embedded surface that contains an end and this proves that the ends are embedded.  
	
	Assume now that $H=\tfrac 1 2$ and  take $k\geq 3$ and the parameter $\phi\in (\frac{\pi}{k}, \frac{\pi}{2})$. Again we will use a continuity argument to prove that there exist  parameters $(a(\phi), \varphi(\phi))\in\Omega_2$ that solve both period problems.  We define the foliation of $\Omega_2$ by the family of curves $\{\alpha_{\phi}:[0,1]\to \Omega: \phi\in(0,\frac{\pi}{2})\}$ where
	\begin{equation}\label{eq:foliation}
		\alpha_{\phi} (t) = (1-t) (0,\phi) + t (\tan(\tfrac{\pi}{2}-\phi),\tfrac{\pi}{2}). 
	\end{equation}
	By Lemma~\ref{lemma-P2}, we get
	\[\lim_{t\to 0} \mathcal{P}_2^2(\alpha_{\phi}(t), f_2(\alpha_\phi(t)))=\mathcal{P}_2^2(0,\phi,f_2(0,\phi)) = \cos(\phi).\] 
	
	For $t_\epsilon=1-\epsilon$ with $\epsilon>0$ small enough we have that, the second coordinate of $\alpha_\phi$ is $\frac \pi 2 -\epsilon(\frac \pi 2-\phi)$. Hence, by Lemma~\ref{lemma-P2}, we get $\mathcal{P}^2_2(\alpha_{\phi}(t_\epsilon), f(\alpha_{\phi}(t_\epsilon)))>1$.
	Since $\cos(\phi) < \cos(\frac{\pi}{k})$, by continuity, there exists an instant $t_*\in (0,1)$ such that $\mathcal{P}^2_2(\alpha_{\phi}(t_*), f_2(\alpha_{\phi}(t_*)))=\cos(\frac{\pi}{k})$. We have proved that, for each $\phi$, there exists at least  $(a(\phi), \varphi(\phi))=\alpha_\phi(t_*)$ such that $\Sigma_\phi=\Sigma_{\varphi(\phi)}(\infty,a(\phi),f_2(a(\phi),\varphi(\phi)))$ solves both periods problems. Then,  after successive reflections over the vertical planes and the horizontal plane of symmetry, we obtain a complete $\frac{1}{2}$-surface with genus $1$ and $k$ ends.
\end{proof}

\begin{remark}\label{rm:m k}
	We also obtain $H$-surfaces with genus $1$ and $k\geq 5$ ends when the first period function vanishes and the second period function is equal to $\cos(\frac{m \pi}{k})$ with $m<\frac k 2$ and $\text{gcd}(m,k)=1$. If $m>1$, the  $H$-surfaces constructed close after $m$ laps around the origin and they are never embedded, see Figure~\ref{Fig-knoid} (right).
	
\end{remark}

\begin{figure}[htb]
	\begin{center}
		\includegraphics[height=5cm]{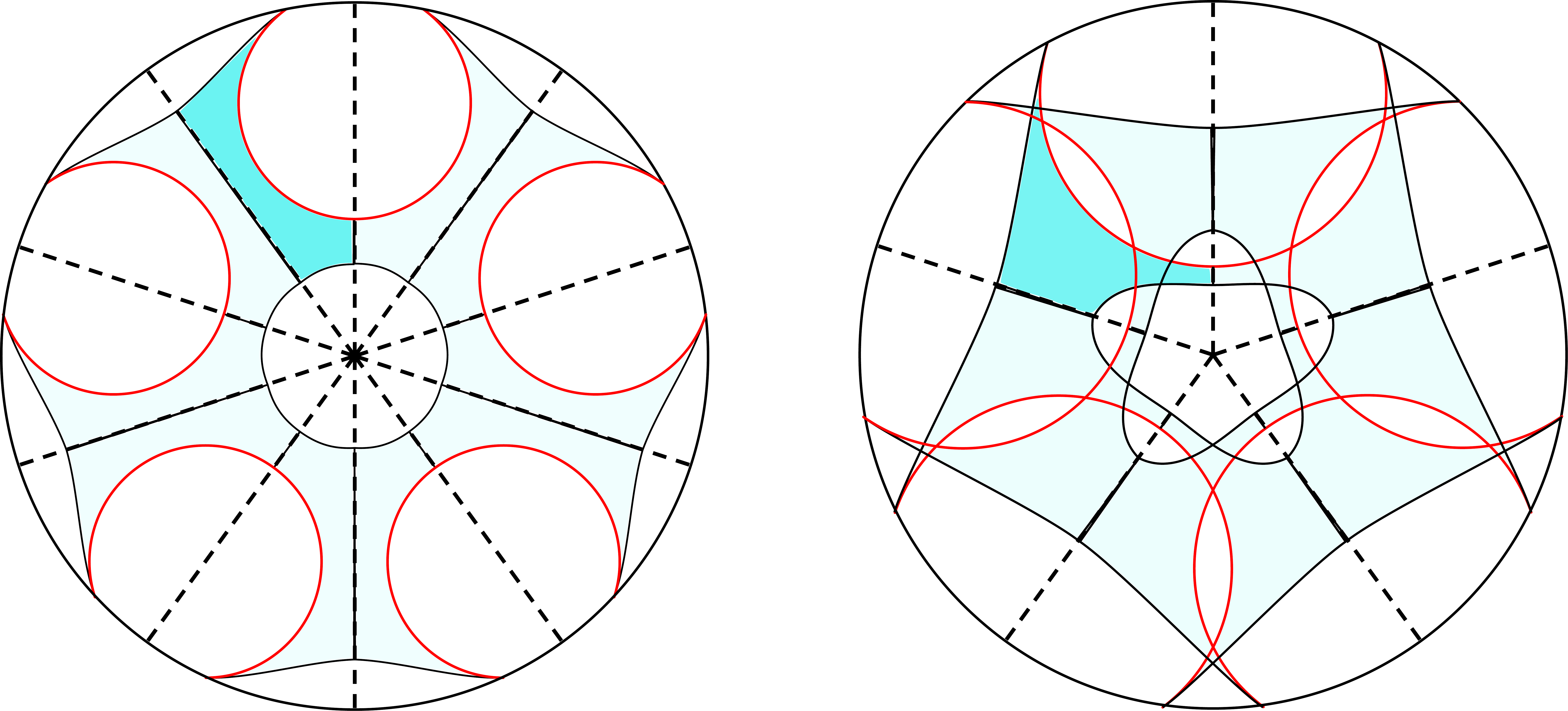}
	\end{center}
	\caption{The projection onto $\h^2$ in the disk model of a $(H,5)$-noid with $\mathcal P_2^2=\cos(\frac{\pi}{5})$ (left) and $(H,5)$-noid with $\mathcal P_2^2=\cos(\frac{2\pi}{5})$.}
	\label{Fig-knoid}
\end{figure}

\begin{proposition}\label{Theorem:embebimiento} The $(H,k)$-noids with genus one  given by Theorem~\ref{th:k-noides} are embedded for $\frac{1}{2}\cos(\frac{\pi}{k})<H \leq \tfrac 12$. In particular, for $\frac{1}{4}<H\leq\frac{1}{2}$, all $(H,3)$-noids with genus one are embedded.
\end{proposition}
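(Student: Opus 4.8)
The plan is to assemble the complete surface from the embedded fundamental piece together with its mirror images, and to isolate the single geometric obstruction to global embeddedness, namely two neighbouring ends running into each other, which will be excluded exactly by the inequality $H>\tfrac12\cos(\tfrac{\pi}{k})$.

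First I would collect what is already known to be embedded. By Lemma~\ref{lemma:vertical graph} the fundamental piece $\Sigma_\varphi(\infty,a_2,b)$ is a vertical graph, hence embedded; and in the proof of Theorem~\ref{th:k-noides} it is shown that reflecting $\widetilde\Sigma_\varphi$ across $\widetilde h_1$ makes the total variation of $\theta_0$ along the extended line $\widetilde v_0^*$ equal to $2\varphi<\pi$, so the doubled end is again a vertical graph by \cite[Proposition~3.10]{CMT}. Moreover, items (1) and (2) of Lemma~\ref{lemma-P2} confine the neck curve $v_0$ to the wedge bounded by the two symmetry geodesics $\pi(h_1)$ and $\gamma=\pi(h_2)$, which meet at the centre $c$ with angle $\tfrac{\pi}{k}$ since the second period was solved with $\mathcal P_2^2=\cos(\tfrac{\pi}{k})$. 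The complete surface is then obtained by reflecting this graph across the $k$ vertical planes through the axis over $c$ and across the horizontal plane $\h^2\times\{0\}$; as all these pieces are graphs over angular sectors at $c$ meeting only along their bounding symmetry planes, global embeddedness reduces to showing that the projections of the reflected copies do not overlap.

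Since the neck already lies inside its wedge, an overlap can only be produced at the ends, where the surface runs off to $\partial_\infty\h^2$. Here I would use Lemma~\ref{lem:geodesic-asym} together with the limit computation in Lemma~\ref{lemma-P2}: each end is asymptotic to a vertical $H$-cylinder whose projection is a complete hypercycle of geodesic curvature $2H$, symmetric about the corresponding $\pi(h_1)$-geodesic and meeting $\partial_\infty\h^2$ at the angle $\arccos(2H)$ (the endpoint of the parameter interval $(-\arccos(2H),\arccos(2H))$ of the limit cylinder $\alpha$). Two consecutive ends are interchanged by the reflection across $\gamma$, because $\gamma$ bisects the two adjacent $\pi(h_1)$-axes; hence these ends are disjoint precisely when the hypercycle of one of them stays strictly on its side of $\gamma$, i.e. when its ideal endpoints lie inside the sector of half-opening $\tfrac{\pi}{k}$. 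Comparing the opening angle $\arccos(2H)$ of the hypercycle with the half-opening $\tfrac{\pi}{k}$, this non-crossing holds if and only if $\arccos(2H)<\tfrac{\pi}{k}$, equivalently $2H>\cos(\tfrac{\pi}{k})$, that is $H>\tfrac12\cos(\tfrac{\pi}{k})$.

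The hard part will be this final angular comparison: one must pin down the ideal endpoints of the asymptotic hypercycles relative to $\gamma$ and prove the non-crossing rigorously rather than reading it off the opening angle, which I would carry out by an explicit computation of the hypercycle in the half-space model normalised as in the second period problem (with $h_1$ in $\{x=0\}$ and $v_0(0)=(0,1)$). For $H=\tfrac12$ the asymptotic cylinders degenerate to horocylinders, the angle $\arccos(2H)$ vanishes, and the non-crossing is automatic, so every $(\tfrac12,k)$-noid is embedded, recovering Plehnert's examples \cite{Plehnert}. Finally the displayed special case is immediate: for $k=3$ one has $\tfrac12\cos(\tfrac{\pi}{3})=\tfrac14$, whence all $(H,3)$-noids with genus one and $\tfrac14<H\le\tfrac12$ are embedded.
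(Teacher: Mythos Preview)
Your reduction is correct and matches the paper's: global embeddedness amounts to checking that the reflected copy of $\Sigma_\varphi$ across the plane containing $h_2$ is still a graph, which in turn means the curve $v_2$ meets the geodesic $\gamma$ only once. You have also landed on the right numerical condition, since $\arccos(2H)<\tfrac{\pi}{k}$ is equivalent to $H>\tfrac12\cos(\tfrac{\pi}{k})$.

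The gap is in the ``angular comparison'' step. The quantity $\arccos(2H)$ is the Euclidean angle at which the hypercycle $\pi(v_1)$ meets $\partial_\infty\h^2$; it is \emph{not} the angle that the ideal endpoints subtend at the centre $c$, and the hypercycle is not centred at $c$ but is orthogonal to the $y$-axis at some $(0,y_1)$ with $y_1\neq $ the height of $c$. So the sentence ``the ideal endpoints lie inside the sector of half-opening $\tfrac{\pi}{k}$ iff $\arccos(2H)<\tfrac{\pi}{k}$'' is not justified: without controlling the position of $\gamma$ on the $y$-axis relative to $(0,y_1)$ you cannot conclude that the hypercycle stays on one side of $\gamma$. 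You yourself flag this as ``the hard part'', but the paper does not fill it by a pure angle argument.

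What the paper does instead is an explicit comparison of ideal $x$-coordinates. Since $\pi(v_1)$ is a constant-curvature-$2H$ curve orthogonal to the $y$-axis at $(0,y_1)$ with $y_1>1$, its ideal endpoint satisfies $p_1<\frac{-1-2H}{\sqrt{1-4H^2}}$. On the other side, the crucial ingredient you are missing is Lemma~\ref{lemma-P2}\,(4): the inequality $\frac{y_0}{\sin(\psi_0)}>-\frac{1}{\sin(\pi/k)}$ forces the footpoint of $\gamma$ to satisfy $\gamma(\pi)_x>-\frac{1+\cos(\pi/k)}{\sin(\pi/k)}$. The embeddedness criterion $p_1<\gamma(\pi)_x$ then reduces to $\sqrt{\tfrac{1+2H}{1-2H}}>\cot(\tfrac{\pi}{2k})$, which is exactly $H>\tfrac12\cos(\tfrac{\pi}{k})$. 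So the work is carried by item~(4) of Lemma~\ref{lemma-P2}, which encodes that $v_0$ meets $\gamma$ only once; your sketch never invokes this.

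Finally, your treatment of $H=\tfrac12$ is too quick. Saying ``$\arccos(2H)$ vanishes, so non-crossing is automatic'' does not work, because for $H=\tfrac12$ the curve $\pi(h_1)$ is non-compact and there is no finite hypercycle $\pi(v_1)$ to compare with. The paper argues separately here, using the approximating graphs $\widetilde\Sigma_\phi^n$ over bounded triangles: the geodesic curvature of $v_1^n$ tends to $1$ while the second coordinate of $h_1^n$ diverges, forcing the first coordinate of $v_2^n$ to diverge to $-\infty$, so $v_2$ can only cross $\gamma$ once.
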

\begin{proof}
	
	Observe that the embeddedness of each $(H,k)$-noid with genus one can be guaranteed if the extended surface of $\Sigma_\varphi$ by the reflection about $h_2$ is embedded, or equivalently if the extension of the curve $v_2$ is embedded after the reflection. As $\Sigma_\varphi$ is a vertical graph, this is equivalent to the fact that $v_2$ intersects only once the geodesic~$\gamma$ ($v_2$ has not self-intersections after the reflection over $h_2$.)

		Assume first that $\frac{1}{2}\cos(\frac{\pi}{k})<H<\tfrac 1 2$.
		Consider $p=\pi(h_3)=(p_1,0)$, which coincides with the ideal endpoint of $v_1$ and $v_2$; in particular, the first coordinate of $p$ verifies $p_1<\frac{-1-2H}{\sqrt{1-4H^2}}$. Moreover, we will see  that $v_2$  intersects $\gamma$ just once if and only if $p_1$ is smaller than the first coordinate of $\gamma(\pi)$, which will be denoted by $\gamma(\pi)_x$. Assume by contradiction that $p_1$ is smaller than the first coordinate of $\gamma(\pi)$ and $v_2$ intersect $\gamma$ at least twice. Then there exists a compact  arc of $v_2$ with extremes in $\gamma$ contained in the region of $\h^2\backslash \gamma$ which does not contain $\pi(v_1)$, however along this arc the curvature can not satisfies $k_g<2H$ with respect to the normal that points to the interior of $\Delta$. Then,
		 using the inequality in Lemma~\ref{lemma-P2} item (4) and Equation~\eqref{eq:gamma0}, if $\mathcal P_2^2=\cos(\frac{\pi}{k})$ we have    $\gamma(\pi)_x>-\frac{1+\cos(\frac{\pi}{k})}{\sin(\frac{\pi}{k})}$. Then, for $H>\frac{1}{2}\cos(\frac{\pi}{k})$ we have that
		\[p_1<\frac{-1-2H}{\sqrt{1-4H^2}}<-\frac{1+\cos(\frac{\pi}{k})}{\sin(\frac{\pi}{k})}<\gamma(\pi)_x,\]
		which proves the case $H<\tfrac 12$.

	Assume now that $H=\tfrac 1 2$.  We will prove that the first coordinate of the curve $v_2$ goes to $-\infty$. This means that $v_2$ can intersect $\gamma$ only once as in the previous case, so the surface will be embedded.  Consider $\widetilde\Sigma^n_\phi:=\widetilde\Sigma^n_{\varphi(\phi)}(n,a(\phi),f_2(a(\phi),\phi))$ the sequence of minimal graphs over $\widetilde\Delta_n$ converging to $\widetilde\Sigma_\phi$ and its respective conjugate surfaces $\Sigma_\phi^n$ converging to $\Sigma_\phi$. On the one hand, let $\widetilde v_1^n\subset\partial\widetilde\Sigma_\phi^n$ and $\widetilde v_2^n\subset\partial\widetilde\Sigma_\phi^n$ be the vertical geodesics projecting onto $p_1^n$ and $p_2^n$ respectively, and let $v_1^n$ and $v_2^n$ their conjugate curves contained in  horizontal planes.  Let $k_g^n=1-(\theta_1^n)'$ be the curvature of $v_1^n$ with respect to the normal that points to the interior of the domain $\Delta^n$ where $\Sigma_\phi^n$ is projecting. We know that $k_g^n$ approaches $1$ as $n\to\infty$. On the other hand, the second coordinate of $h_1^n$ diverges since we have shown that $\pi(h_1)$ is not compact. Then we have that the two coordinates of $v_1^n$ diverge. In particular, the first coordinate of $v_2^n$ also diverges to $-\infty$ and the embeddedness follows.
\end{proof}
\begin{remark}\label{REMARK-Emb}
	We can also guarantee the embeddedness of the $(H,k)$-noids for $0<H<\tfrac 1 2$ if the value $a_\varphi$ verifies $a_\varphi>a_{\mathrm{emb}}$, see also~\cite{CM}. This condition means that the angle at the point $p_2$ is less or equal than $\frac{\pi}{2}$. Let $\theta_2^*$ be the angle of rotation of ${\widetilde v_2}^*$, the extension of $\widetilde v_2$. As $\int_{{\widetilde v_2}^*}{\theta_2}^*<\pi$ then ${v_2}^*$, the extension of $v_2$ by  reflection, is embedded by \cite[Lemma 2.1]{CMR} and therefore the $(H,k)$-noid is also embedded. 
\end{remark}

\subsection{$H$-surfaces with infinitely many ends}\label{subsec:infty-ends}
We are going to analyze now the case where the first problem is solved but $\mathcal P_2\geq 1$. 

\begin{itemize}
	\item 	When $\mathcal P_2^2(a_2,\varphi,f_2(a_2,\varphi))=1$,  $h_1$ and $h_2$ lies in vertical asymptotic planes, so after successive reflections over the vertical planes and the horizontal plane, we obtain a periodic surface invariant by a discrete group of parabolic translations that fix the common vertical ideal line of the vertical planes of symmetry. This gives us a $1$-parameter family of  parabolic $(H,\infty)$-noids with one limit end that is, the ends are accumulating in an ideal vertical geodesic. By similar arguments to those of~Proposition~\ref{Theorem:embebimiento} one can prove that they are embedded for $H=\tfrac 1 2$.   
	
	\item When $\mathcal P_2(a_2,\varphi,f_2(a_2,\varphi))>1$,  $h_1$ and $h_2$ lie in two disjoint vertical  planes, so after successive reflections over the vertical planes and the horizontal plane, we obtain a periodic surface invariant by a discrete group of hyperbolic translations spanned by successive reflections over the vertical planes. This gives us a $2$-parameter family of  hyperbolic $(H,\infty)$-noids with two limit ends, that is, the ends are accumulating in two different ideal vertical geodesic. Similar arguments to  the proof of Theorem~\ref{th:k-noides} show that the ends of these surface are embedded. Moreover  in this case for $0<H<\tfrac 1 2$  we have more freedom and we can choose  $a_2>a_\mathrm{emb}(\varphi)$,whence the reflected surface of $\Sigma_\varphi(\infty,a_2,f_2(a_2,\varphi))$ about the vertical plane containing $h_2$ is embedded, and consequently the complete surface is embedded. For $H=\tfrac 1 2 $ they are always embedded by similar arguments to those of~Proposition~\ref{Theorem:embebimiento}.  
\end{itemize}

	We state the following result:
\begin{theorem}\label{th:planar-domain-sub}
There exists properly embedded $H$-surfaces in $\h^2\times\R$ with genus zero, infinitely many ends and two limit ends for $0\leq H\leq \tfrac 1 2$.
\end{theorem}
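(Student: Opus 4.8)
The plan is to produce these surfaces as the embedded \emph{hyperbolic} $(H,\infty)$-noids of Subsection~\ref{subsec:infty-ends}, built from the fundamental piece $\Sigma_\varphi(\infty,a_2,b)$ in the regime where the first period vanishes while the second period exceeds~$1$. First I would fix $\varphi\in(0,\tfrac{\pi}{2})$ and, by Lemma~\ref{lemma:P1}, take $b=f_2(a_2,\varphi)$ so that $\mathcal P_1^2(a_2,\varphi,f_2(a_2,\varphi))=0$; this places $v_0$ and $v_2$ in a common horizontal plane, normalized to $\h^2\times\{0\}$. Then I would invoke Lemma~\ref{lemma-P2}: for $0<H<\tfrac{1}{2}$ one has $\mathcal P_2^2(a_2,\varphi,f_2(a_2,\varphi))>1$ once $a_2$ is sufficiently close to $a_{\mathrm{max}}(\varphi)$, and for $H=\tfrac{1}{2}$ the same holds for $\varphi$ close to $\tfrac{\pi}{2}$; in the subcritical case I would in addition keep $a_2>a_{\mathrm{emb}}(\varphi)$. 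For $H=0$ these are the minimal saddle towers of~\cite{R,MR}, which I would cite directly, exactly as Theorem~\ref{th:k-noides} defers to~\cite{CM} in the minimal case.

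Once $\mathcal P_2^2>1$, the vertical planes of symmetry carrying $h_1$ and $h_2$ are disjoint and admit a common perpendicular geodesic $\Gamma\subset\h^2$. Reflecting the fundamental piece successively across these two planes generates a discrete group of hyperbolic translations along $\Gamma\times\R$; together with the reflection across $\h^2\times\{0\}$ these reflections produce a complete surface $\mathcal S$ invariant under a group $G$ whose translational part is an infinite cyclic group $\langle T\rangle$. Because $G$ is generated by reflections in two \emph{disjoint} geodesic planes it contains no rotational elements and acts properly discontinuously with exactly two limit points, namely the endpoints of $\Gamma$ on $\partial_\infty\h^2$. These account for the two limit ends, while each translate of the fundamental domain contributes finitely many ends (asymptotic to vertical $H$-cylinders when $0<H<\tfrac{1}{2}$, by Lemma~\ref{lem:geodesic-asym}), so $\mathcal S$ has infinitely many ends.

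The genus is the heart of the matter and is where the construction departs from the $(H,k)$-noids of Theorem~\ref{th:k-noides}: there the $k$ vertical planes meet along a common vertical axis, and the rotational closing-up around that axis forces a handle, whereas here the symmetry is purely translational. To make ``genus zero'' precise I would exhaust $\mathcal S$ by the finite unions $\bigcup_{i=-n}^{n}T^i(K)$, where $K$ is the union of the fundamental piece $\Sigma_\varphi(\infty,a_2,b)$ (a vertical graph by Lemma~\ref{lemma:vertical graph}) with its reflection across $\h^2\times\{0\}$. Each such union is a finite gluing of disks along symmetry arcs; since no rotational axis is ever involved, the gluing produces no handles and is therefore planar. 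As every compact subsurface of $\mathcal S$ sits inside one of these planar pieces, $\mathcal S$ itself is planar, i.e.\ of genus zero.

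For embeddedness I would reproduce the arguments of Subsection~\ref{subsec:infty-ends} and Proposition~\ref{Theorem:embebimiento}: the fundamental piece is an embedded vertical graph by Lemma~\ref{lemma:vertical graph}; for $0<H<\tfrac{1}{2}$ the choice $a_2>a_{\mathrm{emb}}(\varphi)$ makes the total variation of the angle of rotation along the extension of $v_2$ smaller than $\pi$, so the reflection across the plane of $h_2$ stays embedded by~\cite[Proposition~3.10]{CMT}; for $H=\tfrac{1}{2}$ one uses the divergence of the first coordinate of $v_2$ as in Proposition~\ref{Theorem:embebimiento}; and for $H=0$ embeddedness is the Krust property. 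Properness follows from the local finiteness of the tiling and the proper discontinuity of $G$. I expect the main obstacle to be the \emph{global} embeddedness: one must certify that non-consecutive translates of the doubled fundamental piece do not collide. This should follow from the fact that each piece is confined to the closed slab of $\h^2\times\R$ bounded by its two disjoint vertical planes of symmetry, so that distinct fundamental slabs meet only along shared symmetry planes; verifying this confinement, together with the planarity of the quotient tiling, is the part of the argument requiring the most care.
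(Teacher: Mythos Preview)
Your proposal is correct and follows exactly the route the paper takes: the theorem is stated without a formal proof block, relying instead on the preceding discussion in Subsection~\ref{subsec:infty-ends}, which is precisely the construction you outline (hyperbolic $(H,\infty)$-noids obtained when $\mathcal P_2^2>1$, with $a_2>a_{\mathrm{emb}}(\varphi)$ for subcritical~$H$ and the argument of Proposition~\ref{Theorem:embebimiento} for $H=\tfrac12$). You actually supply more detail than the paper does on the genus-zero claim and on global embeddedness via the slab decomposition, points the paper leaves implicit.
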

\begin{remark}
Properly embedded surfaces with genus zero, and a finite number of ends were constructed in~\cite{CMR}. Observe that in the case of $H=\tfrac 12$, the parabolic $(H,\infty)$-noids are properly embedded surfaces with infinitely many ends and one limit end.
\end{remark}

\subsection{Solving the second period problem for the $(H,k)$-nodoids $\Sigma_\varphi(a_1,\infty,b)$}

\begin{lemma}\label{lemma-P2-2}
	Set $a_2=\infty$ and $(a_1,\varphi)\in\Omega_1$. Under the  assumptions of the second period problem, the following statements hold true:
	
	\begin{enumerate}
		\item If $|\mathcal P^1_2(a_1,\varphi,b)|<1$, then $\gamma$ intersects the $y$-axis with an angle $\delta$ with \[\cos(\delta)=\begin{cases}
			\mathcal P^1_2(a_1,\varphi,b),\   \text{if } \sin(\psi_0)<0,\  \\
			-\mathcal P^1_2(a_1,\varphi,b),  \text{if } \sin(\psi_0)>0.
		\end{cases}\]

		\item Assume $0< b<b_\mathcal I^1(\infty,\varphi)$. We have that
		\begin{itemize}
			\item  $x(s)<0$ and $\psi(s)\in(\pi,2\pi)$;
			\item   if $\gamma$ intersect the $y$-axis with an angle $\delta$, then $\varphi>\delta+2H b$ and $\mathcal P^1_2(a_1,\varphi,b)>0$.

		\end{itemize}
		\item If $b<0$ then $\psi(s)>\pi$. 
		
	\end{enumerate}

	Moreover:
	\begin{itemize} 
		\item If $0<H<\frac{1}{2}$, $\lim_{a_1\to 0}\mathcal P^1_2(a_1,\varphi,f_1(a_1,\varphi))=\cos(\varphi)$ and $|\mathcal P^1_2(a_1,\varphi,f_1(a_1,\varphi))|>1$ for $a_1$ close to $a_{\mathrm{max}}(\varphi)$. In fact, there exist $0<\varphi_-<\varphi_+<\frac \pi 2$ such that   $\mathcal P^1_2(a_1,\varphi,f_1(a_1,\varphi))>1$ for all $\varphi<\varphi_-$ and $a_1$ close to $a_{\mathrm{max}}(\varphi)$, and  $\mathcal P^1_2(a_1,\varphi,f_1(a_1,\varphi))<-1$ for all $\varphi>\varphi_+$ and $a_1$ close to $a_{\mathrm{max}}(\varphi)$.
	\item If $H =\tfrac 12$,  $\lim_{a_1\to 0}\mathcal P^1_2(a_1,\varphi,f_1(a_1,\varphi))=\cos(\varphi)$ and $\mathcal P^1_2(a_1,\varphi,f_1(a_1,\varphi))<-1$ for $\varphi$ close enough to $\frac{\pi}{2}$.
	\end{itemize}
	
\end{lemma}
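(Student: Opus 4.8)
The plan is to follow the proof of Lemma~\ref{lemma-P2} as closely as possible, now working with the reflected surface $\Sigma_\varphi^-(a_1,\infty,b)$ in the half-space model and the curve $v_0^-$ parameterized by arc-length with $\psi(0)=\pi$, as prescribed in the definition of $\mathcal P_2^1$ in~\eqref{eq:second-period-2}. The decisive new feature is that now $b$ ranges over $(-\infty,b_{\mathcal I}^1(\infty,\varphi))$ and may be negative; by Lemma~\ref{lemma-nu} the sign of $\theta_0'$ along $v_0$ equals the sign of $b$, and this is exactly what decides which of the two conventions of Remark~\ref{Remark:signo} governs the evolution of the angle $\psi$. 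I would first record, verbatim from~\eqref{eqn:gamma}, the parameterization of the geodesic $\gamma$ carrying $\pi(h_1^-)$ and compute, as in~\eqref{eq:period2-angle}, the oriented angle $\delta$ at which it meets the $y$-axis. This yields item~(1): the quantity $\tfrac{x_0\sin(\psi_0)}{y_0}-\cos(\psi_0)$ equals $\cos(\delta)$ when $\sin(\psi_0)<0$ and $-\cos(\delta)$ when $\sin(\psi_0)>0$, the sign flip being caused precisely by the reversal of the orientation in which $\gamma$ crosses the axis when $\sin(\psi_0)$ changes sign.

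For item~(2), in the range $0<b<b_{\mathcal I}^1(\infty,\varphi)$ we have $\theta_0'>0$, so $v_0^-$ bends towards the interior with $k_g<2H$ exactly as in the $(H,k)$-noid case; consequently the Gauss--Bonnet argument of Lemma~\ref{lemma-P2}(1) carries over unchanged to give $x(s)<0$ and $\psi(s)\in(\pi,2\pi)$, and the area computation~\eqref{eq:G-B-2} gives $\varphi>\delta+2Hb$. Since $\psi(s)\in(\pi,2\pi)$ forces $\sin(\psi_0)<0$, item~(1) gives $\cos(\delta)=\mathcal P_2^1$, and then $\delta<\varphi-2Hb<\tfrac\pi2$ yields $\mathcal P_2^1>0$. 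Item~(3) is a one-sided barrier argument for the ODE~\eqref{eq:psi}: when $b<0$ the contrary orientation applies and $\theta_0'<0$, so at any instant where $\psi=\pi$ one computes $\psi'=1-\theta_0'+2H>0$; hence $\psi$ can never return to $\pi$ and $\psi(s)>\pi$ throughout. The upper bound $\psi<2\pi$ genuinely fails here, since the longer curve $v_0^-$ may wind further, which is why only $\psi>\pi$ is claimed.

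The limit $\lim_{a_1\to0}\mathcal P_2^1=\cos(\varphi)$ is obtained as in Lemma~\ref{lemma-P2}: by Lemma~\ref{lemma:P1} we have $f_1(a_1,\varphi)\to0^-$, so integrating the relevant form of~\eqref{eq:psi} along $v_0^-$ kills the $\int\cos(\psi)\,ds$ and $2Hb$ terms and forces $\psi_0\to\varphi+\pi$, whence $\mathcal P_2^1\to\cos(\varphi)$. For $a_1$ near $a_{\mathrm{max}}(\varphi)$, Lemma~\ref{lemma:P1} gives $f_1\to-\infty$, so $v_0^-$ becomes arbitrarily long and, after the normalization of the second period problem, the surface converges to a subset of the vertical $H$-cylinder through $(0,1)$ met from its convex side; in particular $y_0\to0^+$, so $\mathcal P_2^1$ blows up and $|\mathcal P_2^1|>1$.

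The hard part is the refined statement producing $\varphi_-<\varphi_+$. Since $y_0\to0^+$, the sign of the blow-up of $\mathcal P_2^1=\tfrac{x_0\sin(\psi_0)}{y_0}-\cos(\psi_0)$ is that of $x_0\sin(\psi_0)$, so the claim reduces to determining how the limiting quantities $\lim_{a_1\to a_{\mathrm{max}}}x_0$ and $\lim_{a_1\to a_{\mathrm{max}}}\sin(\psi_0)$ depend on $\varphi$. I would extract these from the limiting vertical $H$-cylinder configuration, using the integrated form of~\eqref{eq:psi} (whose $\theta_0$ contribution encodes the cone angle $\varphi$) to control $\psi_0$ and the geometry of $v_0^-$ to control $x_0$; one then shows that $x_0\sin(\psi_0)$ is positive for $\varphi$ small (so $\mathcal P_2^1\to+\infty>1$) and negative for $\varphi$ near $\tfrac\pi2$ (so $\mathcal P_2^1\to-\infty<-1$), and the thresholds $\varphi_\pm$ follow from the continuity of $\mathcal P_2^1$ in $\varphi$. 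I expect this step to be the main obstacle, since it demands quantitative control of the limit rather than the soft comparison arguments used elsewhere. Finally, the case $H=\tfrac12$ is handled by the same limiting scheme with the $H$-cylinder replaced by the horocylinder $\{y=1\}$ and the comparison surface taken to be twice the fundamental piece of the helicoid $\mathcal H_{a_1,\infty}$ of Subsection~\ref{subsec:relevant surfaces}, which yields $\mathcal P_2^1<-1$ as $\varphi\to\tfrac\pi2$.
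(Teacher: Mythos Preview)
Your outline for items~(1)--(3) and the limit $a_1\to0$ is essentially the paper's argument, with your ODE barrier for~(3) replacing the paper's geometric comparison with tangent curves of curvature $2H$ (both work, though the sign bookkeeping in your version requires care because of the reflection $\widetilde\Sigma^-$ and the orientation switch of Remark~\ref{Remark:signo}).

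The genuine gap is in the ``hard part'', and it is exactly the step you flag as the main obstacle. Your plan is to read off the sign of $x_0\sin(\psi_0)$ from the limiting $H$-cylinder and the integrated form of~\eqref{eq:psi}; but the $\theta_0$-term always contributes $\varphi$, so the dependence on $\varphi$ you need is hidden entirely in $\int_0^{|b|}\cos(\psi)\,ds$ with $|b|\to\infty$, and there is no soft way to control that integral. The paper resolves this by a different, more structural idea: it first observes that \emph{before} the normalization of the second period problem (i.e., placing $h_2^-$ instead of $h_1^-$ in $\{x=0\}$) the sequence $\Sigma^-_\varphi(a_1^n,\infty,f_1(a_1^n,\varphi))$ converges to twice the fundamental piece of an $H$-catenodoid constructed in~\cite{CMR}, and then invokes \cite[Proposition~4.8]{CMR}, which says these catenodoids are embedded for large $a_{\max}(\varphi)$ (equivalently small $\varphi$) and non-embedded for small $a_{\max}(\varphi)$ (equivalently $\varphi$ near $\tfrac\pi2$). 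Embeddedness forces the curves $\pi(v_i^{n-})$ to cross the plane of $h_2^{n-}$ exactly once, whence $\psi_0^n\in(\pi,2\pi)$, $\sin(\psi_0^n)<0$, and $\mathcal P_2^1>1$; non-embeddedness forces two crossings, whence $\psi_0^n\in(2\pi,3\pi)$, $\sin(\psi_0^n)>0$, and~\eqref{eq:gamma0-2} gives $\mathcal P_2^1<-1$. The $H=\tfrac12$ case is the same mechanism: the conjugate of the limiting helicoid $\mathcal H_{a_1,\infty}$ is a \emph{non-embedded} $\tfrac12$-catenodoid, and the non-embedded branch of the dichotomy yields $\mathcal P_2^1<-1$.

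So the missing ingredient is not a sharper estimate on the cylinder limit but the identification of the limit (in the alternate normalization) with the catenodoids of~\cite{CMR} together with their known embeddedness dichotomy; once you have that, the sign of $\sin(\psi_0)$ --- and hence of $\mathcal P_2^1$ --- is read off qualitatively from the number of intersections, with no quantitative control needed.
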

\begin{proof} In what follows, we consider the surfaces $\widetilde\Sigma^-_\varphi(a_1,\infty,b)$ and $\Sigma^-_\varphi(a_1,\infty,b)$ its conjugate  $H$-surface as in the setting of the second period problem.
	\begin{enumerate}
		
		\item Observe that, if $\sin(\psi_0)=0$, then $|\mathcal P^1_2(a_1,\varphi,b)|=1$. Assume first that  $\sin(\psi_0)<0$ and proceed as in Lemma~\ref{lemma-P2}. We  parameterize the curve $\gamma$ as in Equation~\eqref{eqn:gamma}. The same computation of item~(3) in Lemma~\ref{lemma-P2} tells us that $\gamma$ intersects the $y$-axis with an angle $\delta$ given by Equation~\eqref{eq:period2-angle}.
		
		Otherwise, if $\sin(\psi_0)>0$, we parameterize the curve $\gamma$ as
		\begin{equation}\label{eqn:gamma2}
			\gamma:(0,\pi)\to \mathbb{H}^2,\quad \gamma(t)=\left(x_0-y_0\dfrac{\cos(\pi - t)+\cos(\psi_0)}{\sin(\psi_0)}, y_0 \dfrac{\sin(\pi - t)}{\sin(\psi_0)}\right).
		\end{equation}
		We get
		\begin{align}\label{eq:gamma0-2}
			\gamma(0)=\left(y_0\frac{1+\mathcal P^1_2(a_1,\varphi,b)}{\sin(\psi_0)},0\right)\ \ \text{and}\ \ \gamma(\pi)=\left( y_0\frac{-1+\mathcal{P}^1_2(a_1,\varphi,b)}{\sin(\psi_0)},0\right),
		\end{align}  
		whose first coordinates are positive and negative, respectively. That means that $\gamma$ intersects the $y$-axis. The angle of intersection $\delta$ at the instant $s^*$ where $\gamma$ intersects the $y$-axis satisfies
		\begin{equation}\label{eq:period2-angle2}
			\cos(\delta)=\frac{\langle \gamma'(s_*), y\partial_y  \rangle }{|\gamma'(s_*)|}=-\frac{x_0\sin(\psi_0)}{y_0}+\cos(\psi_0)=-\mathcal P_2^1(a_1,\varphi,b).
		\end{equation}

		\item As the angle of rotation along $\widetilde v_0^-$ turns in a positive sense   ($\theta_0'>0$), we can apply item (1), (2) and (3) of Lemma~\ref{lemma-P2} obtaining similar results. Assume now by contradiction that $\mathcal P^1_2(a_1,\varphi,b)<0$. By Equation~\eqref{eq:second-period} we get that
		\[x_0=y_0\frac{\mathcal{P}^1_2(a_1,\varphi,b)+\cos(\psi_0)}{\sin(\psi_0)}.\]
		If $\mathcal P^1_2(a_1,\varphi,b)\leq-1$, we obtain that $x_0\geq0$ which is a contradiction. If  $-1<\mathcal P^1_2(a_1,\varphi,b)\leq0$, by item~(1) we have that $\delta=\arccos(\mathcal P^1_2(a_1,\varphi,b))\geq\frac\pi 2>\varphi$ which contradicts  $\varphi>\delta+2H|b|$.                                  
		
		\item 	As $\theta_2'<0$, we know by Lemma~\ref{lemm:vertical} that the normal along $v_0^-$ points to the exterior of $\Delta$ and $k_g>2H$ with respect to this normal. We have that $v_0^-$ stays locally in the mean convex side of the tangent curve of constant curvature $2H$ at $v_0^-(0)$. If $\psi(s)>\pi$ were not true for all $s\in(0,b)$, let us consider the first instant $s_0>0$ in which $\psi(s_0)=\pi$. At this instant, we have that $v_0^-$ contains points locally around $v_0^-(s_0)$ in the non-mean convex side of the tangent curve of constant curvature $2H$ at $v_0^-(s_0)$, which contradicts $k_g>2H$.
	\end{enumerate}	
	
	Let us now analyze the limits. Assume that $b<0$. Integrating along $v_0^-$ the identity $\psi'=-\theta'-\cos(\psi)+2H$ (see Formula~\ref{eq:psi} and Remark~\ref{Remark:signo}) and taking into account that here $\psi(b)-\psi(0)=\psi_0- \pi$ and $\theta_0(|b|)-\theta_0(0)=-\varphi$ since $\theta_0'<0$, we obtain
	\begin{equation}
		\psi_0=\varphi +\pi-\int_{0}^{|b|}\cos(\psi(s))ds+2H |b|.
	\end{equation}
	In particular,   when $b\to0^-$, we have that $\psi_0$ converges to $\varphi+\pi$ and consequently $\mathcal P^1_2(a_1,\varphi,b)$ converges to $\cos(\varphi)$ as $b\to 0$. If we take a sequence $a_1^n$ converging to $0$, then Lemma~\ref{lemma:P1} implies that $f_1(a_1^n,\varphi)\to0^-$ and we get that $\lim_{a_1\to 0}\mathcal P^1_2(a_1,\varphi,f_1(a_1,\varphi))=\cos(\varphi)$.
	
Assume first that $H<\tfrac 1 2$.	Let us consider a sequence $a_1^n\to a_\mathrm{max}(\varphi)$. By Lemma~\ref{lemma:P1} we get that $f_1(a_1^n,\varphi)\to-\infty$, so  $\widetilde\Sigma_\varphi(a_1^n,\infty,f_1(a_1^n,\varphi))$ converges to twice the fundamental piece of the conjugate surface of the $H$-catenodoids constructed in~\cite{CMR} and therefore $\Sigma_\varphi^-(a_1^n,\infty,f_1(a_1^n,\varphi))$ converges to twice the fundamental piece of an $H$-catenodoid. Nevertheless, as in the setting of the second period problem we are translating and rotating $\Sigma_\varphi^-(a_1^n,\infty,f_1(a_1^n,\varphi))$ in order to $v_0^{n-}(0)=(0,1,0)$ and ${(v_0^{n-})'}(0)=-E_1$. We obtain that the limit surface is not twice the fundamental piece of the $H$-catenoid but a subset of the $H$-cylinder  that projects onto a curve of constant curvature $2H$  orthogonal to the $y$-axis at $(0,1)$. The $H$-cylinder can   be parameterized as $\alpha\times\R$ with $\alpha:(-\arccos(-2H),\arccos(-2H) )\to \h^2$ given by
	\[\alpha(s)=\frac{1}{1+2H}(\sin(s),2H+\cos(s)).\]  We deduce that $x_0^n\to \frac{-1+2H}{\sqrt{1-4H^2}}<0$ and $y_0^n\to 0$.
	
	To understand the limit we distinguish if the limit  (after translation) $H$-catenodoid is or is not embedded.
	
	Assume first that the limit $H$-catenodoid is not embedded. We translate and rotate the surface $\Sigma_\varphi^-(a_1^n,\infty,f_1(a_1^n,\varphi))$ such that (in the half-space model) $(h_1^-)^n$ is contained in the vertical plane $\{x=0\}$ and $v_0^{n-}$ and $v_2^{n-}$ are contained in the horizontal  plane $\h^2\times\{0\}$. In the limit  we have that the projection of $(v_1^-)^\infty\in\h^2\times\{+\infty\}$ intersects twice the geodesic $\{x=0\}\subset\h^2$ and the same happens for the curves $(v_0^-)^\infty$ and $(v_2^-)^\infty$, see Figure~\ref{Fig-Lim-Inf} ((A) - up left). By the continuity of the conjugation (see~\cite[Proposition 3.3]{CMT}), the same happens for the curves $v_i^{n-}$ with $n$ large enough, see Figure~\ref{Fig-Lim-Inf} ((A) - up right). Then we rotate the surface $\Sigma_\varphi^-(a_1^n,\infty,f_1(a_1^n,\varphi))$ until $(h_2^-)^n$ lies in the vertical plane $\{x=0\}$ and $v_0^{n-}(0)=(0,1,0)$ and $(v_0^{n-})'(0)=-E_1$ (the setting of the second period problem). We have that the projections of $v_0^{n-}$, $v_1^{n-}$ and $v_2^{n-}$ intersect twice the vertical plane containing the curve $h_1^{n-}$, see~Figure~\ref{Fig-Lim-Inf} ((A) - bottom right). In particular, we have that $\psi_0^n\in(2\pi,3\pi)$, that is, $\sin(\psi_0^n)>0$. Moreover, the curve $\gamma^n$ intersects twice the curve $v_1^{n-}$ and in particular $\gamma^n$ does not intersect the $y$-axis, then  by Equation~\eqref{eq:gamma0-2} we deduce that $\mathcal P_2^1(a_1^n,\varphi,f_1(a_1^n,\varphi))<-1$ for $n$ large enough since  $\sin(\psi_0^n)>0$.
	
	\begin{figure}[htb]
			\includegraphics[height=6.5cm]{FIG-LIMITEINF-JUNTOS.pdf}
			\caption{\label{Fig-Lim-Inf}}
		\subcaption{The projection of the surfaces $\Sigma^-_\varphi(a_{\mathrm{max}},\infty,-\infty)$ (left) and $\Sigma^-_\varphi(a_1^n,\infty,f_1(a_1^n,\varphi))$ for large $n$ (right) with $0<H<\tfrac 1 2$.}
		 \subcaption{The projection of the surfaces $\Sigma^-_{\pi/2}(a_1,\infty,-\infty)$ (left) and $\Sigma^-_{\varphi^n}(a_1,\infty,f_1(a,\varphi^n))$ for large $n$ (right) with $H=\tfrac 1 2$.\newline
		  Up,  we are assuming the curve $h_1^-$ is contained in the vertical plane $\{x=0\}$. Down,  we are assuming that $h_2^-$ is contained in the vertical plane $\{x=0\}$ (the setting of the second period problem). The projections of the vertical planes of symmetry are represented in dashed lines.}
	
	\end{figure}
	
	If the limit $H$-catenodoid is embedded (not in the boundary case where $(v_0^-)^\infty$ and its reflected copy intersect each other  in a point of the asymptotic boundary), the argument is analogous but, in this case, by the continuity of conjugation the curves     $\pi(v_0^{n-})$,  $\pi(v_1^{n-})$ and $\pi(v_2^{n-})$ intersect only  once $\gamma^n$ (the projection of the vertical plane containing the curve $h_1^{n-}$) for large $n$ obtaining that $\psi_0^n\in(\pi,2\pi)$. A similar analysis shows that in this case $\mathcal P_2^1(a_1^n,\varphi,f_1(a_1^n,\varphi))>1$ for large $n$. 
	
	On the other hand, if $\varphi\to\frac\pi 2$, we have that $a_{\mathrm{max}}(\varphi)\to0$, and then~\cite[Proposition 4.8]{CMR} ensures that the limit $H$-catenodoid is not embedded. If  $\varphi\to 0$, we have that $a_{\mathrm{max}}(\varphi)\to+\infty$, and then~\cite[Proposition 4.8]{CMR} ensures that the limit $H$-catenodoid is embedded, which completes the case $H<\tfrac 1 2 $.
	
	Assume now that $H=\tfrac{1}{2}$. Let us consider a sequence $\varphi ^n\to \frac\pi2$. By Lemma \ref{lemma:P1} we have that $f_1(a_1,\varphi^n)\to-\infty$, therefore $\widetilde\Sigma_{\varphi^n}(a_1,\infty,f_1(a_1,\varphi^n))$ converges to twice the fundamental piece of the helicoid $\mathcal H_{a_1,\infty}$ of Section~\ref{subsec:relevant surfaces}.
 The conjugate surface $\Sigma_{\frac \pi 2}^-(a_1,\infty,-\infty)$ is twice the fundamental piece of a non-embedded $\frac 1 2$-catenodoid, see~\cite[Section 4.3]{CMR}.  Nevertheless, as in the setting of the second period problem, we are translating and rotating  $\Sigma_{\varphi^n}^-(a_1,\infty,f(a_1,\varphi^n))$ in order to have $v_0^{n-}(0)=(0,1,0)$ and $(v_0^{n-})'(0)=-E_1$.
   We obtain that the limit surface is not twice the fundamental piece of the $H$-catenoid but a subset of the horocylinder that projects onto a horocycle orthogonal to the $y$-axis at $(0,1)$. The horocylinder can be parameterized as $\alpha\times\R$ with $\alpha:(-\pi,\pi )\to \h^2$ given by
	\[\alpha(s)=\tfrac{1}{2}(\sin(s),1+\cos(s)).\]  We deduce that $x_0^n\to 0$ and $y_0^n\to 0$.
	
	We translate and rotate the surface $\Sigma_{\varphi^n}^-(a_1,\infty,f_1(a_1,\varphi^n))$ such that, in the half-space model, $(h_1^-)^n$ is contained in the vertical plane $\{x=0\}$ and $v_0^{n-}$ and $v_2^{n-}$ are contained in the horizontal plane $\h^2\times\{0\}$. In the limit, we have that the projection of $(v_0^-)^\infty$ and $(v_2^-)^\infty$ intersect twice the geodesic $\{x=0\}\subset\h^2$, see Figure~\ref{Fig-Lim-Inf} ((B)  - up left). By the continuity of conjugation (see~\cite[Proposition 3.3]{CMT}), for large $n$, the curves $\pi(v_0^{n-})$ and $\pi(v_2^{n-})$ also intersect twice the $y$-axis, see Figure~\ref{Fig-Lim-Inf}. Moreover, the curve $\pi((h_2^-)^n)$ is a non compact curve contained in a geodesic that cannot intersect the $y$-axis. Then we rotate the surface $\Sigma_{\varphi^n}^-(a_1,\infty,f_1(a_1,\varphi^n))$ until $(h_2^-)^n$ is contained in the vertical plane $\{x=0\}$ (the setting of the second period problem) and we have that the projections of  $v_0^{n-}$ and $v_2^{n-}$ intersect twice the vertical plane containing the curve $(h_1^-)^n$, see~Figure~\ref{Fig-Lim-Inf} ((B) - up right). In particular, we have that $\psi_0^n\in(2\pi,3\pi)$, that is, $\sin(\psi_0^n)>0$. Moreover, the curve $\gamma^n$ does not intersect the $y$-axis. We deduce from  Equation~\eqref{eq:gamma0-2} that $\mathcal P_2^1(a_1,\varphi^n,f(a_1,\varphi^n))<-1$ for $n$ large enough since $\sin(\psi_0^n)>0$.	
\end{proof}

\begin{theorem}\label{th-k-nodoids}
	For each $k\geq 2$, there exists properly Alexandrov-embedded  $H$-surfaces with $0<H\leq\frac{1}{2}$ in $\h^2\times\R$ with genus $1$ and $k$ ends. These $H$-surfaces have dihedral symmetry with respect to $k$ vertical planes and they are symmetric with respect to a horizontal plane. Moreover for $0<H<\tfrac 1 2 $, each of their ends is asymptotic to (and contained in the convex part of)  a vertical $H$-cylinder.
\end{theorem}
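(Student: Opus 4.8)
The plan is to follow the scheme of Theorem~\ref{th:k-noides} for the $(H,k)$-noids, the essential new ingredient being that the second period function of the nodoids is \emph{two-sided}, which is exactly what will make $k=2$ admissible. Throughout I set $a_2=\infty$ and work with the reflected fundamental piece $\Sigma_\varphi^-(a_1,\infty,b)$ in the setting of the second period problem. The entire construction reduces to producing, for each fixed $k\geq2$, a pair $(a_1,\varphi)\in\Omega_1$ and a height $b$ solving both period problems simultaneously; once this is achieved, successively reflecting the fundamental piece across its $k$ vertical planes and its horizontal plane of symmetry yields the complete surface exactly as in~\cite{CM} and Theorem~\ref{th:k-noides}.

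The first period problem is settled by Lemma~\ref{lemma:P1}: it provides a unique continuous function $f_1\colon\Omega_1\to(-\infty,b_{\mathcal I}^1(\infty,\varphi))$ with $\mathcal P_1^1(a_1,\varphi,f_1(a_1,\varphi))=0$, so that $v_0$ and $v_1$ always lie in a common horizontal plane. It then remains to solve the second period problem along this locus, that is, to study the continuous reduced function $g(a_1,\varphi):=\mathcal P_2^1(a_1,\varphi,f_1(a_1,\varphi))$ on the connected domain $\Omega_1$. By the limits recorded in Lemma~\ref{lemma-P2-2}, the image of $g$ is large: for $0<H<\tfrac12$ one has $g>1$ for $\varphi<\varphi_-$ with $a_1$ near $a_{\mathrm{max}}(\varphi)$ and $g<-1$ for $\varphi>\varphi_+$ with $a_1$ near $a_{\mathrm{max}}(\varphi)$; for $H=\tfrac12$ one has $g(a_1,\varphi)\to\cos(\varphi)$ as $a_1\to0$ (so values arbitrarily close to $1$ for $\varphi$ small) together with $g<-1$ for $\varphi$ near $\tfrac\pi2$. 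In both cases the connectedness of $\Omega_1$ forces the image of $g$ to be an interval containing values exceeding (or approaching) $1$ and values below $-1$, hence containing $(-1,1)$.

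The key step is to convert a value of $g$ into the geometric angle between the two vertical symmetry planes. By item (1) of Lemma~\ref{lemma-P2-2}, whenever $|g|<1$ the geodesic $\gamma$, the complete extension of $\pi(h_2^-)$, meets the $y$-axis at an angle $\delta$ with $\cos(\delta)=g$ or $\cos(\delta)=-g$ according to the sign of $\sin(\psi_0)$. Thus imposing $\delta=\tfrac\pi k$ amounts to solving $g=\pm\cos(\tfrac\pi k)$, and since both values $\pm\cos(\tfrac\pi k)$ lie in the interval $(-1,1)$ attained by $g$, the intermediate value theorem supplies, for every $k\geq2$, parameters $(a_1,\varphi)\in\Omega_1$ realizing the angle $\tfrac\pi k$. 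The case $k=2$ is now included precisely because $\cos(\tfrac\pi2)=0$ lies strictly inside the attained interval; this is what the one-sided image of the noid period function excluded.

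For such parameters the fundamental piece $\Sigma_\varphi^-(a_1,\infty,b)$ solves both period problems, and the reflections across the $k$ vertical planes and the horizontal plane $\h^2\times\{z_0\}$ glue it into a complete, properly Alexandrov-embedded $H$-surface carrying the asserted dihedral symmetry in $k$ vertical planes together with the horizontal mirror symmetry; the gluing produces a single handle, giving genus one, and the $k$ reflected copies of the boundary curve $v_2$ give the $k$ ends, as in~\cite{CM}. Finally, for $0<H<\tfrac12$, Lemma~\ref{lem:geodesic-asym} identifies each end-curve $v_2$ as an ideal curve of constant curvature $2H$ in $\h^2\times\{-\infty\}$ whose normal points into $\Delta$, so every end is asymptotic to a vertical $H$-cylinder from its convex side. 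The main obstacle is this second period analysis, and in particular the sign bookkeeping of Lemma~\ref{lemma-P2-2}(1): it is what simultaneously secures the full interval $(-1,1)$ in the image of $g$ (hence all $k\geq2$) and pins down the convex-side asymptotics of the ends.
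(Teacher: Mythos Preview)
Your proposal follows essentially the same strategy as the paper: solve the first period via the function $f_1$ of Lemma~\ref{lemma:P1}, then apply the intermediate value theorem to the reduced second-period function $g(a_1,\varphi)=\mathcal P_2^1(a_1,\varphi,f_1(a_1,\varphi))$ using the limits of Lemma~\ref{lemma-P2-2}. The paper organizes the IVT step slightly differently---for $0<H<\tfrac12$ it fixes $\varphi$, lets $a_1$ run from $0$ to $a_{\mathrm{max}}(\varphi)$, and then also varies $\varphi$; for $H=\tfrac12$ it moves along the explicit foliation $\alpha_\phi$ of~\eqref{eq:foliation}---whereas you argue more globally that the image of $g$ on the connected domain $\Omega_1$ is an interval containing $(-1,1)$. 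Both routes reach the same conclusion, and your observation that $k=2$ enters precisely because the nodoid period function is two-sided is exactly the point.

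One step should be tightened. You write that ``imposing $\delta=\tfrac{\pi}{k}$ amounts to solving $g=\pm\cos(\tfrac{\pi}{k})$'', but by Lemma~\ref{lemma-P2-2}(1) one has $\cos(\delta)=g$ when $\sin(\psi_0)<0$ and $\cos(\delta)=-g$ when $\sin(\psi_0)>0$, and you do not control which sign regime holds at the point where you solve $g=\cos(\tfrac{\pi}{k})$; you might land at $\delta=\tfrac{(k-1)\pi}{k}$ instead of $\tfrac{\pi}{k}$. This does not break the argument---since $\gcd(k-1,k)=1$, an angle $\tfrac{m\pi}{k}$ with $\gcd(m,k)=1$ still generates the dihedral group $D_k$ and produces a surface with $k$ ends (compare Remark~\ref{rm:m k} and the paper's own use of the targets $\cos(\tfrac{m\pi}{k})$ in its proof)---but you should say this explicitly rather than claim to have pinned down $\delta=\tfrac{\pi}{k}$.
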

\begin{proof}
	Assume that $0<H<\tfrac 1 2$ and fix $0<\varphi<\frac{\pi}{2}$. By Lemma~\ref{lemma-P2-2}, 
	$\mathcal P_2^1(a_1,\varphi,f_1(a_1,\varphi))$ tends to $\cos(\varphi)$ when $a_1\to0$. If $\mathcal P_2^1(a_1,\varphi,f_1(a_1,\varphi))$ becomes greater than $1$ as $a_1\to a_{\mathrm{max}}(\varphi)$, then by the continuity of $\mathcal P_2^1$ we have that there exists $a_\varphi$ such that $ \mathcal P_2^1(a_\varphi,\varphi,f_1(a_\varphi,\varphi))=\cos(\frac{m\pi}{k})$ for all $k\geq 3$ and $m<k$ with $\text{gcd}(m,k)=1$ satisfying $\cos(\varphi)<\cos(\frac{m\pi}{k})$. On the other hand, if $\mathcal P_2^1(a_1,\varphi,f_1(a,\varphi))$ gets smaller than $-1$ as $a\to a_{\mathrm{max}}(\varphi)$, then there exists $a_\varphi$ such that $ \mathcal P_2^1(a_\varphi,\varphi,f_1(a_\varphi,\varphi))=\cos(\frac{m\pi}{k})$ for all $k\geq 2$ and $m<k$ with $\text{gcd}(m,k)=1$ satisfying $\cos(\frac{m\pi}{k})<\cos(\varphi)$.  We know that if $\varphi$ is close to $0$ and $a$ is close to $a_{\mathrm{max}}(\varphi)$, then $\mathcal{P}_2^1(a_1,\varphi,f_1(a_1,\varphi))>1$ and for values of $\varphi$ close to $\frac{\pi}{2}$ and $a_1$ close to $a_{\mathrm{max}}(\varphi)$ we have that $\mathcal{P}_2^1(a_1,\varphi,f_1(a_1,\varphi))<-1$. Then, by varying $\varphi\in(0,\frac{\pi}{2})$
	we find values of $\varphi$ and $a_\varphi$ such that $\mathcal{P}_2^1(a_\varphi,\varphi,f_1(a_\varphi,\varphi))=\cos(\frac{m\pi}{k})$ for all $m<k$ and $\text{gcd}(m,k)=1$. 
	
	Therefore, $\Sigma^-_\varphi:=\Sigma^-_\varphi(a_\varphi,\infty,f_1(a_\varphi,\varphi))$ solves the two period problems, and then after successive reflections over the vertical planes and the horizontal plane of symmetry, we obtain a complete $H$-surface with genus $1$ and $k$ ends asymptotic to vertical cylinders from the convex side. 
	
	Now assume that $H=\frac{1}{2}$ and consider the foliation $\{\alpha_\phi:[0,1]\to\Omega\}_{\phi\in(0,\frac \pi 2)}$ defined in Equation~\eqref{eq:foliation}. Set $k\geq 2$ and $m<k$ with $\text{gcd}(m,k)=1$ and choose $\phi$ such that $ \cos(\frac{m\pi}{k})<\cos(\phi)$. By Lemma~\ref{lemma-P2}, we have that $\mathcal P_2^1(\alpha_\phi(0),f_1(\alpha_{\phi}(0)))=\cos(\phi)$ and $\mathcal{P}_2^1(\alpha_\phi(t),f_1(\alpha_{\phi}(t)))<-1$ for $t$ close enough to $1$. We deduce that there exist $a(\phi)$ and $\varphi(\phi)$ such that $\mathcal{P}_2^1(a(\phi),\varphi(\phi),f_1(a(\phi),\varphi(\phi)))=\cos(\frac{m\pi}{k})$. Then the surface \[\Sigma_\phi^-:=\Sigma_{\varphi(\phi)}^-(a(\phi),\infty,f(a(\phi),\varphi(\phi)))\] solves both periods problems, and we obtain a complete $H$-surface with genus one and $k$ ends after successive reflections over the vertical planes and the horizontal plane of symmetry.
\end{proof}

\begin{proposition}
For $H=\tfrac 12$, the $(H,k)$-nodoids with genus one and $k\geq2$ ends are never embedded.
\end{proposition}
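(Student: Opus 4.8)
The plan is to mirror the embeddedness criterion of Proposition~\ref{Theorem:embebimiento} and Remark~\ref{REMARK-Emb} and to show that, for $H=\tfrac12$, it \emph{fails} along the handle rather than along the ends. Exactly as for the $(H,k)$-noids, a complete $(\tfrac12,k)$-nodoid is embedded only if the symmetric extension $v_1^{-*}$ of the horizontal symmetry curve $v_1^-$ over the finite vertex $p_1$ is an embedded curve in its horizontal plane (recall that $\mathcal P_1^1=0$ places $v_0^-$ and $v_1^-$ in the same plane $\h^2\times\{0\}$); hence it suffices to prove that $v_1^{-*}$ self-intersects. The decisive difference with the noids is the sign produced by the conjugation: by Lemma~\ref{lemma-nu}(2) one has $\theta_1'<0$ along $\widetilde v_1$, so by Lemma~\ref{lemm:vertical}(1) and the computation already carried out for $v_0^-$ in the proof of Lemma~\ref{lemma-P2-2}, the conjugate curve $v_1^-$ has geodesic curvature $k_g=2H-\theta_1'>2H=1$. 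Thus, for $H=\tfrac12$, the curve $v_1^-$ is strictly more curved than a horocycle, i.e.\ locally convex like a geodesic circle, whereas for $0<H<\tfrac12$ one only gets $k_g>2H$, which may remain below $1$ and permits the embedded configurations of Remark~\ref{REMARK-Emb}.

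First I would run the $H=\tfrac12$ approximation already used in Lemma~\ref{lemma-P2-2} and Proposition~\ref{Theorem:embebimiento}: take the minimal graphs $\widetilde\Sigma_\varphi^n:=\widetilde\Sigma_\varphi(a_1,n,f_1(a_1,\varphi))$ over the finite triangles $\widetilde\Delta(a_1,n,\varphi)$ converging to the nodoid piece, together with their conjugates $\Sigma_\varphi^{n-}\to\Sigma_\varphi^-$. Here the base domain is the Euclidean strip bounded by the parallel rays $l_2,l_3$ and the transversal $l_1$ of length $a_1$, so the interior angle at the finite vertex $p_1$ equals $\pi-\varphi$, which is $>\tfrac\pi2$ for \emph{every} $\varphi\in(0,\tfrac\pi2)$. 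In contrast with the $(H,k)$-noid case, the side $l_1$ is now \emph{compact}, so the mechanism of Proposition~\ref{Theorem:embebimiento} that forced the conjugate of the finite vertex to run off to infinity (and thus to stay embedded) is absent: the curves $v_1^{n-}$ remain in a bounded region while the curves $v_2^{n-}$ recede to build the end. I would make this quantitative by integrating $\psi'=-\theta'-\cos(\psi)+2H$ along $v_1^-$ (formula~\eqref{eq:psi} and Remark~\ref{Remark:signo}), and by tracking the normal along $\widetilde v_1$, to show that the total rotation of the unit normal along the complete extension $\widetilde v_1^*$ equals $2(\pi-\varphi)>\pi$. Consequently \cite[Proposition 3.10]{CMT} can no longer certify the embeddedness of $v_1^{-*}$, which is precisely the opposite of the situation exploited in Remark~\ref{REMARK-Emb}.

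It then remains to upgrade this failure of the turning bound into an actual self-intersection, and this is the step I expect to be the main obstacle. The idea is to argue as in the proof of Lemma~\ref{lemma-P2}: if $v_1^{-*}$ were embedded, then, together with an arc of the relevant vertical symmetry plane of the handle, it would bound a compact region $U\subset\h^2\times\{0\}$, and applying the Gauss--Bonnet formula to $U$ while using $k_g>1$ throughout (against the total exterior turning, which is controlled by $2(\pi-\varphi)$) would yield an inequality of the type $0>-\mathrm{area}(U)=(\text{positive})>0$, exactly as in~\eqref{eq:G-B}. The care needed is twofold: to set up the closed loop correctly, accounting for the $k$-fold dihedral identification and the height normalization $\mathcal P_1^1=0$; and to keep the Gauss--Bonnet bookkeeping uniform along the approximating sequence so that the strict inequality $k_g>1$ survives in the limit. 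Since all the curvature and angle estimates are strict for $H=\tfrac12$ and for every admissible $\varphi$, the conclusion is that no choice of parameters solving the two period problems produces an embedded nodoid, which is the assertion.
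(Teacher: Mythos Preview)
Your approach is genuinely different from the paper's, and the step you yourself flag as the main obstacle is a real gap that I do not see how to close.

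The paper does not analyze the handle curve $v_1^{-*}$ at all. Instead it tracks the \emph{end}: using the approximation $\widetilde\Sigma_\phi^n=\widetilde\Sigma_{\varphi(\phi)}(a(\phi),n,f_1(a(\phi),\phi))$, it observes that along the vertical geodesic over the receding vertex the rotation speed $\theta'$ tends to $0$, so the conjugate horizontal curve has curvature tending to $1$ and becomes asymptotic to a horocycle of larger and larger Euclidean radius tangent at $(0,0)$. Together with the fact that for $H=\tfrac12$ the non-compact projection $\pi(h_2)$ diverges along the $y$-axis direction, this pins the ideal endpoint of the end at $(0,0)$, i.e.\ on the $y$-axis. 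Since $|\mathcal P_2^1|<1$ forces the geodesic $\gamma\supset\pi(h_2^-)$ to cross the $y$-axis, the projected boundary must cross $\gamma$, and the reflection over $h_2^-$ already destroys embeddedness. No Gauss--Bonnet contradiction is needed.

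Regarding your plan: the reversal of Remark~\ref{REMARK-Emb} is not valid. That remark gives a \emph{sufficient} embeddedness criterion (total rotation $\leq\pi$); its failure does not yield non-embeddedness. More concretely, $k_g=1-\theta_1'>1$ with $\theta_1'\to0$ only says $v_1^-$ is more curved than a horocycle, with curvature tending to $1$ at infinity; such curves can perfectly well be embedded and asymptotic to a horocycle, so $v_1^{-*}$ may stay on one side of $\{x=0\}$ and be embedded. Your proposed Gauss--Bonnet loop is also ill-posed: $v_1^-$ has infinite length (it is the conjugate of the vertical half-line $\widetilde v_1$ going to $+\infty$), so there is no compact region bounded by $v_1^{-*}$ and a segment of the symmetry axis to which one could apply~\eqref{eq:G-B}; the integral $\int k_g$ diverges. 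Relatedly, the assertion that ``the curves $v_1^{n-}$ remain in a bounded region'' is false for exactly this reason, and in fact the paper's computation shows their projection escapes to the ideal boundary at $(0,0)$. The non-embeddedness ultimately comes from the interaction of the end with the second symmetry plane $\gamma$, not from a local self-intersection at the handle.
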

\begin{proof}
	We will prove that the ideal extreme  $\pi(v_2^-)$ is $(0,0)$.  As the curve $\gamma$ intersects the $y$-axis when $|\mathcal P_2^1|<1$, this means that $\pi(v_0^-)$ or $\pi(v_2^-)$ must cross $\gamma$ and then $\Sigma_\phi^-$ is not embedded after the reflection over the curve $h_1^-$.  

We use similar ideas to those in the proof of the embeddedness of the $\frac{1}{2}$-noids with genus one, see Proposition~\ref{Theorem:embebimiento}.
Let consider $\widetilde\Sigma_\phi^n:=\widetilde\Sigma_{\varphi(\phi)}(a(\phi),n,f_1(a(\phi),\phi))$ the sequence of minimal graphs over $\widetilde\Delta(n,a_1,\varphi(\phi))$ converging to $\widetilde\Sigma_\phi$ and its respective conjugate surfaces (after the reflection over $h_2$) $(\Sigma_\phi^-)^n$ converging to $\Sigma_\phi^-$. On the one hand, let $\widetilde v_1^n\subset\partial\widetilde\Sigma_\phi^n$ and $\widetilde v_2^n\subset\partial\widetilde\Sigma_\phi^n$ be the vertical geodesics projecting onto $p_1^n$ and $p_2^n$ respectively, and let $v_1^{n-}$ and $v_2^{n-}$ be their conjugate curves contained in  horizontal planes.  Let $k_g^n=1-(\theta_1^n)'$ be the curvature of $v_1^{n-}$ with respect to the normal that points to the exterior of the domain $\Delta^n$ where $(\Sigma_\phi^-)^n$ is projecting. We know that $k_g^n$ approaches $1$ as $n\to\infty$.  On the other hand, the second coordinate  of $(h_2^-)^n$ diverges since we have shown that  $\pi(h_2)$ is not compact. We have that $\pi(v_1^{n-})$ approaches a half of a horocylinder with arbitrary large euclidean radius with the ideal extreme in $(0,0)$ that contains the endpoint of $\pi((h_1^n)^-)$ in the line $\{x=0\}$. That proves that the ideal extreme of $\pi(v_2^{n-})$ converges to $(0,0)$ and in particular $\pi(v_1^{n-})$ approaches the asymptotic boundary $\{y=0\}\cup\{+\infty\}$ as $n\to \infty$.	
\end{proof}

\begin{remark}
For $H<\frac 1 2$	we can prove that we have examples of genus $1$ when the second-period function is negative. However, it seems complicated to decide if the sign of $\sin (\psi_0)$ or the sign of $x_0$ are positive or negative in any case. This produces different kinds of $H$-surfaces, depending on these signs as we sketch out in Figure~\ref{Fig-Casos}.

\end{remark}

\begin{figure}[htb]
	\begin{center}
		\includegraphics[height=11cm]{}
	\end{center}
	\caption{The projections of different cases of the fundamental piece with $\mathcal P_2^1=\cos(\frac \pi 4)$, $\mathcal P_2^1=\cos(\frac{2\pi}{3})$ and $\mathcal P_2^1=\cos(\frac{3\pi}{4})$ in the half-space model and in the disk model of $\h^2$. Up, the cases with $\sin(\psi_0)<0$ and, down, the cases with $\sin(\psi_0)>0$.}
	\label{Fig-Casos}
\end{figure}

\begin{remark}
	In the case of $k=2$, that is, the second period function vanishes, we have two possibilities depending on the sign of $\sin(\psi_0)$, see Figure~\ref{Fig-2-nodoides}. We expect that these examples with $2$ ends and genus $1$ are never embedded. At least, they should not be embedded for $H$ near $0$, since there are not examples for $H=0$ by the uniqueness of the horizontal catenoid proved in~\cite{HNST}. In that case  for  $H$ close to $0$,  our examples with $2$ ends should be near to a vertical plane.  
\end{remark}

\begin{figure}[htb]
	\begin{center}
		\includegraphics[height=6cm]{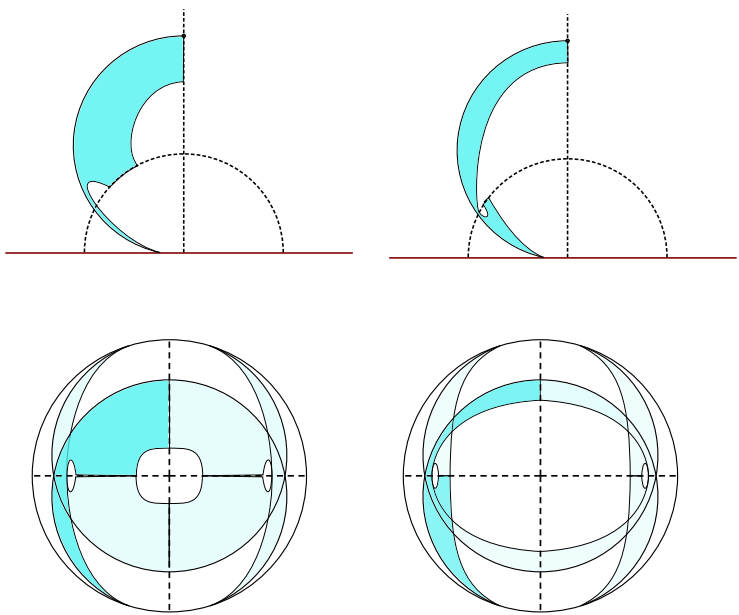}
	\end{center}
	\caption{The projections  of the $(H,2)$-nodoids with genus $1$. On the left, we have the case with $\sin(\psi_0)<0$ and, on the right, the case with $\sin(\psi_0)>0$.}
	\label{Fig-2-nodoides}
\end{figure}

\subsection*{Acknowledgments} The authors would like to express their gratitude to José Miguel Manzano for his valuable comments during the preparation of this paper. This research is supported by MCIN/AEI project PID-2019-111531GA-I00. The first author is also partially supported by the FEDER/ANDALUCÍA P18-FR-4049 and by the MCIN/AEI project PID-2020-117868GB-I00. The second author is also supported by a PhD grant funded by University of Jaén and by a FEDER-UJA grant (Ref. 1380860).

\newpage

\end{document}